\newcommand\myshade{85}
\colorlet{mylinkcolor}{violet}
\colorlet{mycitecolor}{red}
\colorlet{myurlcolor}{cyan}
\newtheorem{Thm}{Theorem}[section]\newtheorem*{Thm*}{Theorem}
\newtheorem{theoremA}{Theorem}
\newtheorem{Lem}[Thm]{Lemma}
\newtheorem{Cor}[Thm]{Corollary}
\newtheorem{Prop}[Thm]{Proposition}
\newtheorem{Prop-Def}[Thm]{Proposition-Definition}
\newtheorem*{Conj*}{Conjecture}
\theoremstyle{definition}
\newtheorem{Ex}[Thm]{Example}
\newtheorem{Def}[Thm]{Definition}
\newtheorem{Rem}[Thm]{Remark}
\newcommand{\w}{\widetilde}
\newcommand{\ra}{\rightarrow}
\newcommand{\la}{\leftarrow}
\newcommand{\D}{\mathcal{D}}
\newcommand{\C}{\mathcal{C}}
\newcommand{\K}{\mathsf{K}}
\newcommand{\cal}{\mathcal}
\newcommand{\T}{\mathcal T}
\newcommand{\X}{\mathcal X}
\newcommand{\Y}{\mathcal Y}
\newcommand{\ZZ}{\mathcal Z}
\newcommand{\f}{\mathrm{f}}
\newcommand{\Tot}{\mathrm{Tot}}
\newcommand{\bb}{\mathrm{b}}
\newcommand{\Loc}{\mathsf{Loc}}
\newcommand{\Filt}{\mathsf{Filt}}
\newcommand{\h}{{\mathrm H}}
\newcommand{\U}{{\mathcal U}}
\renewcommand{\H}{{\mathcal H}}
\renewcommand{\dim}{{\rm dim}}
\newcommand{\xra}{\xrightarrow}
\newcommand{\Z}{{\mathbb Z}}
\newcommand{\lan}{\langle}
\newcommand{\ran}{\rangle}
\newcommand{\op}{\oplus}
\newcommand{\bop}{\bigoplus}
\newcommand{\ot}{\otimes}
\newcommand{\hs}{\hspace{-3pt}}
\newcommand{\Hom}{\operatorname{Hom}\nolimits}
\newcommand{\Top}{\operatorname{top}\nolimits}
\newcommand{\End}{\operatorname{End}\nolimits}
\newcommand{\RHom}{\mathbf{R}\strut\kern-.2em\operatorname{Hom}\nolimits}
\newcommand{\RshHom}{\mathbf{R}\strut\kern-.2em\mathscr{H}\strut\kern-.3em\operatorname{om}\nolimits}
\newcommand{\shHom}{\mathscr{H}\strut\kern-.3em\operatorname{om}\nolimits}
\newcommand{\shEnd}{\mathscr{E}\strut\kern-.3em\operatorname{nd}\nolimits}
\newcommand{\za}{\alpha}
\newcommand{\zb}{\beta}
\newcommand{\zd}{\delta}
\newcommand{\zD}{\Delta}
\newcommand{\ze}{\epsilon}
\newcommand{\zg}{\gamma}
\newcommand{\zG}{\Gamma}
\DeclareMathOperator{\moduleCategory}{{\mathsf{mod}}} \renewcommand{\mod}{\moduleCategory}
\DeclareMathOperator{\proj}{\mathsf {proj}}
\DeclareMathOperator{\thick}{\mathsf{thick}}
\DeclareMathOperator{\per}{\mathsf{per}}
\DeclareMathOperator{\add}{\mathsf {add}}
\numberwithin{equation}{section}
\definecolor{dark-green}{RGB}{14,150,2}
\definecolor{red}{RGB}{250,0,0}
\newcommand{\gpoint}{\color{dark-green}{\circ}}
\newcommand{\rpoint}{\color{red}{\bullet}}
\begin{document}
\normalem
\title[Recollements of partially wrapped Fukaya categories]{Recollements of partially wrapped Fukaya categories and surface cuts}

\author{Wen Chang}
\address{School of Mathematics and Statistics, Shaanxi Normal University, Xi'an 710062, China}
\email{changwen161@163.com}

\author{Haibo Jin}
\address{Department of Mathematics, Universit\"at zu K\"oln, Weyertal 86-90, 50931 K\"oln, Germany}
\email{hjin@math.uni-koeln.de}

\author{Sibylle Schroll}
\address{Department of Mathematics, Universit\"at zu K\"oln, Weyertal 86-90, 50931 K\"oln, Germany and
Institutt for matematiske fag, NTNU, N-7491 Trondheim, Norway}
\email{schroll@math.uni-koeln.de}


\keywords{}

\date{\today}

\subjclass[2010]{16E35, 
57M50}

\begin{abstract}

In this paper we use  recollements to investigate  partially wrapped Fukaya categories of surfaces with marked points. In particular, we show that cutting surfaces gives rise to  recollements of the corresponding partially wrapped Fukaya categories.
Our approach is based on the fact that the partially wrapped Fukaya category of a surface with marked points is triangle equivalent to the perfect derived category of a homologically smooth and proper graded gentle algebra with zero differential as shown by Haiden, Katzarkov and Kontsevich.
Using this, we  study particular generators of partially wrapped Fukaya categories, namely full exceptional sequences, silting objects and simple-minded collections. In particular, we fully characterise the existence of full exceptional sequences and we give an example of a partially wrapped Fukaya category which does not admit a silting object, that is a generator with no positive self-extensions.
\end{abstract}

\maketitle


\section*{Introduction}\label{Introductions}

Gentle algebras, first arising in the representation theory of associative algebras \cite{AH81,AS87}, connect to many other areas of mathematics such as Lie algebras, dimer models, cluster theory, N=2 Gauge theories and homological mirror symmetry. In the context of homological mirror symmetry, the partially wrapped Fukaya category of a graded marked surface is triangle equivalent to the bounded derived category of a homologically smooth graded gentle algebra with zero differential \cite{HKK17, LP20}.

On the other hand, the notion of recollements of triangulated categories was first introduced by A.~A.~Beilinson, J.~Bernstein and P.~Deligne in \cite{BBD} (see Definition \ref{recollement} for more details).
 Recollements play an important  role in algebraic geometry and representation theory, in particular when comparing derived categories and their invariants (see for example \cite{AKLY,H,Han,Keller98,Koenig91,KoenigNagase}).

In this paper, we use recollements to investigate partially wrapped Fukaya categories of marked surfaces. In particular, we show that cutting the surface gives rise to a recollement of the associated partially wrapped Fukaya categories. More precisely, we show

\begin{theoremA}[Theorem \ref{Thm:RecollementFukaya}]
Let  $\mathcal{W}(S,M,\eta(\zD))$ be the partially wrapped Fukaya category associated to a graded surface dissection $(S, M, \zD)$  with induced line field $\eta(\zD)$.  Let $L \subset \zD$ be a subset of $\zD$ such that  the graded marked surface $\mathcal{W}(S(L),M(L),\eta(L))$ generated by $L$ (as graded ribbon subgraph of $\zD$) has no marked points in the interior. Then there is a recollement of partially wrapped Fukaya categories

\begin{equation}\label{eq:recollementFukayaA}
\xymatrix @C=3pc{
\mathcal{W}(S_L,M_L,\eta(\zD_L)) \ar[r]^{}
& \mathcal{W}(S,M,\eta(\zD)) \ar@/_1.5pc/[l]_{} \ar@/^1.5pc/[l]^{}
 \ar[r]^{}
&\mathcal{W}(S(L),M(L),\eta(L))  \ar@/^-1.5pc/[l]_{} \ar@/^1.5pc/[l]^{}
  } \end{equation}
where the left side corresponds to the graded cut marked surface along $L$.
\end{theoremA}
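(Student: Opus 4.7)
The plan is to translate the statement into algebra via the Haiden--Katzarkov--Kontsevich (and Lekili--Polishchuk) equivalence, and then to produce the recollement as the standard one induced by an idempotent in the associated graded gentle algebra. Let $A = A_{\Delta}$ denote the proper, homologically smooth graded gentle algebra (with zero differential) attached to $(S, M, \Delta)$, so that $\per A \simeq \mathcal{W}(S,M,\eta(\Delta))$. Since the arcs of $\Delta$ are in bijection with the vertices of the Gabriel quiver of $A$, the subset $L \subset \Delta$ determines an idempotent $e = \sum_{\ell \in L} e_{\ell} \in A$, and the proof reduces to recognising the ``left'' and ``right'' terms of the desired recollement as $\per(A/AeA)$ and $\per(eAe)$ respectively.

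The bulk of the work is then two combinatorial identifications. First, I would identify the corner algebra $eAe$ with the graded gentle algebra of the dissection $L$ of $(S(L), M(L), \eta(L))$: compositions of arrows of $A$ that start and end at an $L$-vertex are in bijection with composable sequences of arrows along the ribbon subgraph $L$, and the grading restricts directly from $\eta(\Delta)$ to $\eta(L)$. Second, I would identify the quotient $A/AeA$ with the graded gentle algebra of the induced dissection $\Delta_L$ on the cut surface $(S_L, M_L, \eta(\Delta_L))$: the two-sided ideal $AeA$ is spanned by those paths that factor through an $L$-vertex, so $A/AeA$ retains exactly the arrows supported on $\Delta \setminus L$, which are precisely those of the cut surface dissection. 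Both verifications are essentially local around the arcs in $L$ and their neighbouring polygons of $\Delta$.

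Having made these identifications, the desired recollement becomes an instance of the standard recollement of perfect derived categories induced by a stratifying idempotent,
\[
\per(A/AeA) \longrightarrow \per A \longrightarrow \per(eAe).
\]
The main obstacle is to verify that $e$ is indeed stratifying, i.e.\ that the canonical map $Ae \otimes^{\mathbf{L}}_{eAe} eA \to AeA$ is a quasi-isomorphism, equivalently $\operatorname{Tor}^{eAe}_{>0}(Ae, eA) = 0$. This is where the hypothesis that $S(L)$ has no interior marked points is essential: it ensures that $eAe$ is itself a proper, homologically smooth graded gentle algebra of HKK type, so that the inclusion $L \hookrightarrow \Delta$ induces a well-behaved embedding of graded gentle algebras and that $Ae$ (resp.\ $eA$) decomposes as a direct sum of projective $eAe$-modules, forcing the higher Tor to vanish. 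Once the algebraic recollement is in place, I would identify the six functors with the geometric operations of restriction to, and extension by zero along, the cut $L$, and invoke the HKK/LP equivalence in each of the three positions to obtain the recollement of partially wrapped Fukaya categories in the statement.
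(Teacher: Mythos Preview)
Your approach has a genuine gap: the idempotent $e$ is \emph{not} stratifying in $A$ in general, and $A/AeA$ is \emph{not} the cut-surface algebra. Take the simplest non-trivial case $Q:1\xrightarrow{\alpha}2\xrightarrow{\beta}3$ with $\alpha\beta\in I$ and $L=\{\ell_2\}$. Then $eAe=k$, $Ae$ and $eA$ are two-dimensional, so $Ae\otimes_{eAe}^{\mathbf L}eA=Ae\otimes_k eA$ is four-dimensional, whereas $AeA$ is three-dimensional (basis $e_2,\alpha,\beta$; note $\alpha\beta=0$). The multiplication map is not injective, so $e$ is not stratifying. Correspondingly $A/AeA\cong k\times k$, while the cut-surface algebra $A(\Delta_L)$ is the path algebra of $1\to 3$ with a single arrow $[\alpha\beta]$ of degree $|\alpha|+|\beta|-1$; these are not derived equivalent. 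Your equivalence ``$Ae\otimes^{\mathbf L}_{eAe}eA\to AeA$ is a quasi-isomorphism $\Leftrightarrow$ $\mathrm{Tor}^{eAe}_{>0}(Ae,eA)=0$'' is also false: Tor-vanishing only identifies the derived tensor with the ordinary one, it says nothing about injectivity of the multiplication map.

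The paper circumvents this by replacing $A$ with a quasi-isomorphic dg algebra $B=A_{J_e}$, the \emph{partial cofibrant resolution} obtained by resolving only the relations passing through the $e$-vertices (Theorem~\ref{Thm:AJA}). In $B$ the idempotent \emph{is} stratifying because $BeB$ is projective as a right $B$-module (Lemma~\ref{Lem:A'recB}), and the quotient $B/BeB$ equals the explicitly defined graded gentle algebra $A_e$ of Definition~\ref{definition:left algebra}, which carries the extra arrows $[\alpha_1\cdots\alpha_s]$ recording chains of relations through the deleted vertices. It is this $A_e$, not $A/AeA$, that is then matched with the cut-surface dissection (Proposition~\ref{lemma:induce arrows}, Proposition~\ref{proposition:red dual}). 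Finally one checks that when $(S(L),M(L))$ has no interior marked points, $eAe$ is homologically smooth and proper, so Proposition~\ref{Thm:Fukaya} restricts the recollement of unbounded derived categories to one of $\D^{\rm b}\simeq\per$, and HKK/LP translates this to the desired recollement of Fukaya categories.
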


Our proof  is based on the fact that the partially wrapped Fukaya category $\mathcal(S,M, \eta(\zD))$ is triangle equivalent to the bounded derived category of a proper and homologically smooth differential graded gentle algebra with zero differential \cite{HKK17, LP20}.
In fact, we start with a more general setting. Namely,  we first consider recollements arising from graded quadratic monomial algebras.   For this, given a graded quadratic monomial algebra $A=kQ/I$, we introduce the notion of \emph{partial cofibrant dg algebra resolution} $A_{J}$ with respect to a subset $J$ of $I$ by `resolving' only the relations in $J$ (see Definition \ref{Def:AJ} for the details). Then we show the following
\begin{theoremA}[Theorem \ref{Thm:AJA}]
Let $A=kQ/I$ be a graded quadratic monomial algebra and let $J\subset I$. Let $A_{J}$ be the partial cofibrant dg algebra resolution of $A$. Then
there is a natural quasi-isomorphism $A_{J}\simeq A$ of dg $k$-algebras.
\end{theoremA}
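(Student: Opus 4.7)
The plan is to construct the canonical projection $\pi\colon A_J \to A$ of dg algebras and then verify, using a filtration argument, that $\pi$ is a quasi-isomorphism. The map $\pi$ is determined on generators by sending each arrow of $Q$ to its image in $A$ and each adjoined degree $-1$ cell $x_r$ (one for each $r\in J$) to zero. It is a morphism of graded algebras because the defining relations of $A_J$ on the arrows are contained among those of $A$, and it is a chain map because $\pi(d(x_r)) = \pi(r) = 0 = d(\pi(x_r))$, where the middle equality uses $r\in J\subseteq I$. Naturality in $J$ is then immediate from the presentation, since enlarging $J$ only enlarges the set of adjoined generators that are killed.

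To show $\pi$ is a quasi-isomorphism I would filter $A_J$ by the two-sided ideal $F^{p} A_J$ generated by monomials containing at least $p$ factors of the form $x_r$. On the $E_0$-page of the associated spectral sequence the differential retains only the Koszul-style piece $d(x_r)=r$, so that each associated graded piece $\mathrm{gr}^{p} A_J$ (for $p\ge 1$) decomposes as a direct sum of two-term complexes of shape $x_r \xrightarrow{r} r$ surrounded by words in the arrows modulo $I\setminus J$. Each such two-term complex is manifestly acyclic, so the spectral sequence collapses at $E_1$ onto $\mathrm{gr}^{0} A_J \cong A$, concentrated in cohomological degree zero, which yields $A_J\simeq A$.

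An equivalent and perhaps more hands-on route is to exhibit a contracting homotopy $h$ of $\ker\pi$ directly: given a monomial $w\in A_J$, locate the leftmost occurrence of some factor $r\in J$ inside $w$ and replace it by $x_r$ (with the sign dictated by the Koszul rule), and set $h(w)=0$ if $w$ lies in the image of a chosen section $\iota\colon A\to A_J$. A monomial-by-monomial computation then yields $\mathrm{id}_{A_J} - \iota\pi = dh + hd$, which suffices.

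The principal obstacle I anticipate in either approach is purely combinatorial: one must carefully handle monomials in which several relations from $J$, or a relation in $J$ and one in $I\setminus J$, overlap. This is precisely the place where the hypothesis that $A$ is a \emph{quadratic} monomial algebra enters in an essential way. Each relation is a path of length exactly two and is uniquely determined by its pair of consecutive arrows, so overlap patterns are highly restricted. Once this combinatorial ambiguity has been resolved by a consistent choice (such as ``leftmost first''), the inductive step on monomial length goes through and the Kunneth-style identification of $\mathrm{gr}^{p} A_J$ (or the homotopy equation) becomes routine.
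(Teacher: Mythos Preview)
Your proposal rests on a mistaken reading of $A_J$. You treat the partial cofibrant resolution as $kQ/(I\setminus J)$ with one new generator $x_r$ of degree $-1$ adjoined for each $r\in J$, subject to $d(x_r)=r$. That is \emph{not} the object of Definition~\ref{Def:AJ}: there the new arrows are $[\alpha_1\cdots\alpha_n]$ for \emph{every} chain $\alpha_1\cdots\alpha_n\in J_n$ (i.e.\ every sequence with $\alpha_i\alpha_{i+1}\in J$ for all $i$), not only for $n=2$; the arrow $[\alpha_1\cdots\alpha_n]$ has degree $\sum|\alpha_i|-n+1$ and carries the bar-type differential $d'[\alpha_1\cdots\alpha_n]=\sum_i\pm[\alpha_1\cdots\alpha_i][\alpha_{i+1}\cdots\alpha_n]$.

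The distinction is essential: the algebra you describe is in general \emph{not} quasi-isomorphic to $A$. Take $Q:1\xrightarrow{\alpha}2\xrightarrow{\beta}3\xrightarrow{\gamma}4$ in degree zero with $I=J=\{\alpha\beta,\beta\gamma\}$. In your model the element $\alpha\,x_{\beta\gamma}-x_{\alpha\beta}\,\gamma$ is a degree $-1$ cycle, and there is nothing from $1$ to $4$ in degree $-2$ to bound it, so $H^{-1}\neq 0$. It is precisely the missing cell $[\alpha\beta\gamma]$ that kills this class in the paper's $A_J$. Consequently your spectral-sequence claim that each $\mathrm{gr}^p$ is a sum of acyclic two-term complexes is already false at this example, and the leftmost-replacement homotopy cannot satisfy $dh+hd=\mathrm{id}-\iota\pi$ for the same reason. (Incidentally, with $F^p=\{\text{at least }p\text{ factors }x_r\}$ the differential does not even preserve the filtration, so the $E_0$-page is not what you describe.)

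The paper's proof does go via an explicit contracting homotopy of $\ker\phi$, but on words in the \emph{higher} generators: given $[\alpha_1\cdots\alpha_{t_1}]\cdots[\alpha_{t_s+1}\cdots\alpha_n]$ one merges each adjacent pair of bracket factors that is allowed by $J$, sums with signs, and divides by a combinatorial count $m$. This averaging (which uses characteristic zero) is rather different from a leftmost rule. If you want to rescue a filtration argument for the correct $A_J$, the natural thing is to filter a word by its number of bracket factors; the associated graded then looks, for each fixed underlying word in $Q_1$, like a piece of the bar complex, whose acyclicity one can invoke. But you must first work with the right $A_J$.
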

Moreover, if $J$ is the set of relations at some vertices corresponding to an idempotent $e\in A$, the above quasi-isomorphism induces the following  recollement (see Theorem~\ref{Thm:recollement1})
\begin{equation}\label{IntroRecoll}
\xymatrixcolsep{4pc}\xymatrix{
\D(A_{e}) \ar[r]^{i_{*}=i_{!}}
&\D(A) \ar@/_1.5pc/[l]_{i^{*}} \ar@/^1.5pc/[l]^{i^{!}}
 \ar[r]^{?\ot_{A}^{\bf L}Ae}
&\D(eAe) \ar@/^-1.5pc/[l]_{?\ot^{\bf L}_{k}eA} \ar@/^1.5pc/[l]^{\RshHom_{eAe}(Ae, ?)}
  },\end{equation}
where $A_e$ is again a quadratic monomial algebra explicitly described in terms of quiver and relations defined in Definition~\ref{definition:left algebra}.
We remark that there are serval different ways to construct  dg algebras $B$ such that $\D(B)$ fits the left side of the recollement in \eqref{IntroRecoll}, see for example \cite{BCL, Drinfeld,KY, Ke99, NS}. Here we give a new and concrete construction for any graded quadratic monomial algebra, which plays an important role in our study of partially wrapped Fukaya categories.

If in the recollement in \eqref{IntroRecoll}  two out of the three algebras $A, eAe,$ and  $A_e$ are homologically smooth and proper, so is the third one. Moreover, in this case \eqref{IntroRecoll} restricts to a recollement of perfect derived categories  (see Proposition \ref{Thm:Fukaya}) and gives rise to the recollement \eqref{eq:recollementFukayaA}. More generally we show that it can  be extended to an unbounded ladder of recollements (see Proposition~\ref{Prop:ladder})  as introduced  in \cite{AKLY}.

The second aim of this paper is to study particular generators of partially wrapped Fukaya categories, namely, full exceptional sequences,  silting objects and  simple minded collections. Exceptional sequences originate in algebraic geometry \cite{GR, Ru} and are instrumental in the construction of the Fukaya-Seidel categories \cite{Seidel}.
Silting objects and simple-minded collections are some of the standard tools for studying triangulated categories and are   important  in Koszul duality
\cite{BGS, Keller94,KN, KoY,R}.

We first note that in terms of the associated homologically smooth, proper graded gentle algebras, the recollement in  \eqref{IntroRecoll} gives rise to reduction with respect to silting objects  of the associated  perfect derived categories. This induces  a bijection (see Theorem \ref{Thm:siltreduc})
\[ \{\mbox{silting objects in $\per(A_{e})$}\} \overset{1:1}{\longleftrightarrow} \{ \mbox{silting objects in $\per(A)$ containing $P$}\},\]
which gives us a way to study silting theory of $A$ in terms of the `smaller' algebra $A_{e}$.
We show that a similar  result holds for reduction of simple-minded collections.

We then study which partially wrapped Fukaya categories admit full exceptional sequences,  silting objects and simple-minded collections.

In general, it is difficult to determine whether a triangulated  category has such generators or not.  In the case of a partial wrapped Fukaya category,
 we first give a full characterization of  the existence of full exceptional sequences. 
 \begin{theoremA}[Theorem \ref{Prop:excep}]
Let $\mathcal{W}(S,M,\eta(\zD))$ be a partially wrapped Fukaya category.
 Then $\mathcal{W}(S,M,\eta(\zD))$ has a full exceptional sequence if and only if
 $(S,M)$ is not homeomorphic to a marked surface of genus $\ge 1$ with exactly one boundary and one marked point on that boundary.
 \end{theoremA}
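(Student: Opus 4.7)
The plan is to prove the two directions separately. For the existence direction, I would induct on a suitable complexity of the graded marked surface $(S, M, \eta(\zD))$, for instance on the lexicographically ordered triple $(g, |\partial S|, |M|)$. The base case is the disk with marked points on a single boundary, whose partially wrapped Fukaya category is triangle equivalent to the bounded derived category of a linear $A_n$-quiver and carries a canonical full exceptional sequence given by the indecomposable projectives. For the inductive step, I would choose a subset $L \subset \zD$ to which Theorem~\ref{Thm:RecollementFukaya} applies, producing a recollement
\begin{equation*}
\mathcal{W}(S_L,M_L,\eta(\zD_L)) \to \mathcal{W}(S,M,\eta(\zD)) \to \mathcal{W}(S(L),M(L),\eta(L))
\end{equation*}
whose outer factors are partially wrapped Fukaya categories of strictly simpler graded marked surfaces, themselves outside the excluded class. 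A standard gluing argument for recollements then assembles a full exceptional sequence of the middle term out of one in the left piece (pushed forward by $i_{*}$) and one in the right piece (lifted through the fully faithful left adjoint $j_{!}$), using the vanishings $j^{*} i_{*} = 0$ and $i^{*} j_{!} = 0$. The combinatorial work is to verify that whenever $(S,M)$ has either an interior marked point, or at least two marked points on some boundary component, or at least two boundary components, such an $L$ can indeed be chosen, and that these three cases exactly exhaust the complement of the excluded topological type.

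For the non-existence direction, I would extract a numerical obstruction from the Euler form. A full exceptional sequence $(E_1, \ldots, E_n)$ yields a $\Z$-basis of $K_0$ in which the Gram matrix of the Euler form $\chi$ is upper-triangular with ones on the diagonal, hence has determinant $+1$. I would fix a convenient dissection $\zD$ of a surface of genus $g \geq 1$ with one boundary component and one marked point, make the associated homologically smooth proper graded gentle algebra $A$ explicit, and compute $\det \chi$, showing it differs from $\pm 1$ for every $g \geq 1$. An equivalent formulation uses the Coxeter transformation: in the presence of a full exceptional sequence it must be conjugate to a unipotent matrix, which is obstructed by the non-trivial contribution of $H_1(S;\Z) \cong \Z^{2g}$ to the intersection pairing lying behind $\chi$.

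The main obstacle is the numerical non-existence step: carrying out the determinant computation uniformly in $g$, or equivalently identifying the determinant of $\chi$ as a topological invariant of $(S, M, \eta)$ that is non-trivial exactly in the excluded case. The existence direction reduces, by contrast, to bookkeeping once the recollement gluing of exceptional sequences is made precise and the case analysis on the cutting subset $L$ is completed.
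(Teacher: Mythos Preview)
Your existence direction is close in spirit to the paper's, since both build the full exceptional sequence by iterating the recollement of Theorem~\ref{Thm:RecollementFukaya}. The paper, however, avoids your surface-complexity induction entirely: it quotes \cite[Proposition~3.5]{CS22} to the effect that whenever $(S,M)$ has at least two $\gpoint$-marked points it admits an \emph{exceptional dissection} (Definition~\ref{Def:exceptionaldissection}), i.e.\ one whose associated quiver has no oriented cycles. Once $Q$ is acyclic, each $e_iAe_i=k$, and peeling off one vertex at a time via Proposition~\ref{Thm:Fukaya} produces a chain of recollements with $\D^{\rm b}(k)$ on the right, yielding the full exceptional sequence directly. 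Your approach should also work, but it requires the case analysis you describe and a check that neither cut piece lands in the excluded class, whereas the exceptional-dissection shortcut renders this unnecessary.

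Your non-existence direction has a genuine gap: the Euler-form determinant does \emph{not} obstruct full exceptional sequences here. Take $A^{(1)}$ with all arrows in degree~$0$. Then $\Hom_{\D^{\rm b}(A)}(P_i,P_j[n])=(e_jAe_i)_n$ vanishes for $n\neq 0$, so the Gram matrix of $\chi$ in the basis $[P_1],[P_2]$ is
\[
\begin{pmatrix} \dim e_1Ae_1 & \dim e_2Ae_1 \\ \dim e_1Ae_2 & \dim e_2Ae_2 \end{pmatrix}
=\begin{pmatrix} 2 & 1 \\ 3 & 2 \end{pmatrix},
\]
which has determinant~$1$. Yet $\D^{\rm b}(A^{(1)})$ has no full exceptional sequence (indeed no exceptional object at all). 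The Coxeter-transformation variant fails for the same reason: unimodularity of $\chi$ is necessary but not sufficient, and nothing forces the Coxeter element to be non-unipotent in this family. The paper's argument is instead geometric and pointwise: since $(S,M)$ has a single $\gpoint$-marked point, every admissible curve is a loop or a closed curve, and by \cite{HKK17} the corresponding indecomposable object $X$ then satisfies $\dim\bigoplus_{i\in\Z}\Hom_{\D^{\rm b}(A)}(X,X[i])\ge 2$. Hence no object is exceptional, and a fortiori no full exceptional sequence exists. You should replace the $K$-theoretic obstruction by this (or an equivalent) argument that rules out exceptional objects rather than merely constraining the Euler pairing.
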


 Next we study the existence of silting objects and simple-minded collections.
Denote by $A^{(n)}$ the family of graded gentle algebras with parameters $a_{i}, b_{i}$ for $1\le i\le n$ associated to a surface of genus $n$ with exactly one boundary component and one marked point on that boundary component (see  Section~\ref{Sec:Exceptional} for the precise definitions). We then show the following.

\begin{theoremA}[Theorem \ref{Thm:exist}]\label{Thm:existA}
Let $\mathcal{W}(S,M,\eta(\zD))$ be a partially wrapped Fukaya category.
 Then $\mathcal{W}(S,M,\eta(\zD))$ has   a  silting object  if and only if it has a simple-minded collection and this is the case if
\begin{enumerate}[\rm (1)]
\item $\mathcal{W}(S,M,\eta(\zD))$ is not triangle equivalent to $\D^{\rm b}(A^{(n)})$, or
\item $\mathcal{W}(S,M,\eta(\zD))$ is  triangle equivalent to $\D^{\rm b}(A^{(n)})$ with $a_{i}\not=1$ or $b_{i}\not=1$ for any $1\le i\le n$.
\end{enumerate}
\end{theoremA}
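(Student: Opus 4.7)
The plan is to address the two claims in turn: first the biconditional (silting object exists if and only if simple-minded collection exists), and then the sufficient conditions (1) and (2).

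For the equivalence, I would invoke the Koenig-Yang style bijection between silting objects and simple-minded collections. By Haiden-Katzarkov-Kontsevich, $\mathcal{W}(S,M,\eta(\zD)) \simeq \per(A)$ for a homologically smooth and proper graded gentle algebra $A$ with zero differential, and for such $A$ the perfect derived category coincides with $\D_{\rm fd}(A)$. The correspondence then identifies silting objects with bounded non-degenerate t-structures having length heart, which in turn correspond bijectively to simple-minded collections in the same category; hence nonemptiness of one set is equivalent to nonemptiness of the other.

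For the sufficient conditions I would argue by induction on a suitable complexity of $(S,M,\eta(\zD))$ such as $|M|+|\partial S|+2g(S)$. In case (1), the hypothesis that $(S,M)$ is not of $A^{(n)}$-type gives, by a finite case analysis on the topological data, an arc $\gamma \in \zD$ whose cut along $L = \{\gamma\}$ produces pieces $(S_L,M_L,\eta(\zD_L))$ and $(S(L),M(L),\eta(L))$ of strictly smaller complexity satisfying (by induction) either (1) or (2), with the base case being disks with marked points (whose partially wrapped Fukaya category is derived equivalent to a graded type-$A$ algebra in degree $0$ and thus trivially admits silting objects). The recollement from Theorem~\ref{Thm:RecollementFukaya}, together with the silting reduction bijection of Theorem~\ref{Thm:siltreduc}, then allows one to glue silting objects of the two pieces into a silting object of $\per(A)$.

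For case (2), the algebra $A^{(n)}$ has an explicit, small quiver with $n$ handles labeled by $(a_i,b_i)$, and the graded Hom spaces $\Hom_{\per(A^{(n)})}(P_i,P_j[k]) = (e_j A^{(n)} e_i)_k$ between the indecomposable projectives are determined combinatorially by these parameters. Under the hypothesis that $a_i\neq 1$ or $b_i\neq 1$ for each $i$, I would construct a silting object explicitly as $T=\bigoplus_i P_i[n_i]$ for integer shifts $n_i$ chosen so that $n_j-n_i\geq \max\{d\in\Z:(e_jA^{(n)}e_i)_d\neq 0\}$ for all pairs $(i,j)$; the compatibility of these inequalities reduces to an arithmetic condition on the cyclic sums of the $a_i,b_i$ that is solvable precisely under the stated grading hypothesis. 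The hardest part will be the combinatorial analysis in (ii): enumerating the topological types of non-$A^{(n)}$ surfaces and exhibiting in each a cutting arc that produces inductively tractable pieces, together with verifying that the recollement-based gluing of silting objects of $\per(A_e)$ and $\per(eAe)$ genuinely has vanishing positive self-extensions in $\per(A)$ (rather than merely generating), which requires careful bookkeeping of $\Hom$-vanishing across the six recollement functors $i_*,i^*,i^!,j_!,j^*,j_*$.
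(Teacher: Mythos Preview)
Your treatment of the biconditional is fine and matches the paper's Proposition~\ref{Prop:silting-SMC}, which proves it directly via Keller--Nicol\'as rather than citing Koenig--Yang, but the substance is the same.

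For case~(1) your inductive surface-cutting argument is more laborious than what the paper does: since $A$ is not derived equivalent to any $A^{(n)}$, Theorem~\ref{Prop:excep} already gives a full exceptional sequence in $\D^{\rm b}(A)$, and then \cite[Proposition~3.5]{AI} converts any full exceptional sequence into a silting object in one step. Your approach could be made to work (Lemma~\ref{Lem:g11} guarantees an exceptional arc in a suitable dissection, hence an idempotent $e$ with $eAe=k$ and $eA$ pre-silting), but it essentially re-proves the exceptional-sequence theorem inside the silting argument.

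Case~(2) is where your proposal breaks down. The object $T=\bigoplus_i P_i[n_i]$ cannot be silting in general, because individual indecomposable projectives already have positive self-extensions that no shift can remove. For $A^{(1)}$ one has $e_1Ae_1=k\oplus k\cdot\gamma_1\beta_1$ with $|\gamma_1\beta_1|=b_1$, so $\Hom(P_1,P_1[b_1])\neq 0$; taking for instance $a_1=0$, $b_1=2$ (which satisfies your hypothesis) shows $P_1$ is not pre-silting, and shifting does nothing to a self-extension. The arithmetic condition you allude to is therefore unsolvable in such cases. The paper's argument is genuinely different: it inducts on $n$ via the recollement with $e=e_1+e_2$, reducing to $A^{(1)}$ and $A^{(n-1)}$, and for the base case $n=1$ (Lemma~\ref{Lem:not=1}) it uses a further recollement at $e_2$ together with the Koszul-dual trick $\D^{\rm b}(A)\simeq\D^{\rm b}(A^!)$ (Proposition~\ref{Prop:Extdual}), which replaces $(a_1,b_1)$ by $(2-a_1,2-b_1)$ and forces one of them to be non-positive. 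The silting object produced is emphatically not a sum of shifted projectives.
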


We note that in case that $n=1$, the converse of Theorem  \ref{Thm:existA} holds. Namely, 
we show by hand that $A^{(1)}$ with the grading $a_1=b_1=1$ does not admit a silting object by checking in a case by case analysis, that every  indecomposable object in $\D^{\rm b}(A^{(1)})$ has a positive self-extension.

\subsection*{Acknowledgements} The authors would like to thank Bernhard Keller and Dong Yang for helpful comments regarding recollements and cofibrant dg algebra resolutions.


\section*{Conventions}\label{Conventions}
In this paper, all algebras will be assumed to be over a base field $k$.
Arrows in a quiver are composed from left to right as follows: for arrows $a$ and $b$ we write $ab$ for the path from the source of $a$ to the target of $b$.
We adopt the convention that maps are composed from right to left, that is if $f: X \to Y$ and $g: Y \to Z$ then $gf : X \to Z$. In general, we consider right modules.
We denote by $\mathbb{Z}$ the set of integer numbers, and by $\mathbb{Z}^*$ the set of non-zero integer numbers. We use notation $|M|$ to represent the number of elements in a finite set $M$.

\section{Preliminaries}\label{Preliminaries}

\subsection{DG algebras}
Let $A$  be a differential graded (dg) $k$-algebra, that is, a graded algebra endowed with a compatible structure of a $k$-complex. A \emph{(right) dg $A$-module} is a graded $A$-module endowed with a compatible structure of a $k$-complex. Let $\D(A)$ be the derived category of right dg $A$-modules (see \cite{Keller94, Keller06}). It is a triangulated category obtained from the category of dg $A$-modules by formally inverting all quasi-isomorphisms. The shift functor is given by the shift of complexes.

We denote by $\per(A)$ the \emph{perfect category} of $A$, which is the smallest triangulated subcategory of $\D(A)$ generated by $A$ and which is closed under direct summands. We denote by $\D^{\bb}(A)$ the full subcategory of $\D(A)$ consisting of the objects $M$ whose total cohomology is finite-dimensional (that is, $\dim_{k}\bop_{n}\h^{n}(M)<\infty$).  If $\per (A)\subset \D^{\bb}(A)$, that is, the cohomology algebra $\H^{*}(A)$ is finite dimensional, we say $A$ is \emph{proper}. If $A\in \per (A\ot_{k}A^{\rm op})$, we say $A$ is \emph{homologically smooth}.
The following observations are well-known.
\begin{Lem}\label{Lem:Db=per}
Let $A$ be a homologically smooth dg $k$-algebra. Then $\D^{\bb}(A)\subset \per (A)$. If $A$ is also proper, then $\D^{\bb}(A)=\per (A)$.
\end{Lem}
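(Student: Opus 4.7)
The plan is to establish the two inclusions separately, using the bimodule resolution of $A$ coming from smoothness in one direction and a soft thickness argument in the other.

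For the first inclusion $\D^{\bb}(A) \subset \per(A)$, I would start from the hypothesis that $A\in\per(A\ot_{k}A^{\rm op})$. This gives a finite semi-free (or summand of finitely generated semi-free) resolution $P_\bullet \xrightarrow{\sim} A$ as dg bimodules, where each term is a direct summand of a finite direct sum of copies of $A\ot_{k}A$. The key computational identity to exploit is
\[ M \;\simeq\; M \ot^{\bf L}_{A} A \;\simeq\; M\ot^{\bf L}_{A} P_\bullet, \]
and the observation that $M \ot^{\bf L}_{A}(A\ot_{k}A) \simeq M\ot_{k}A$ as a right $A$-module, where the $A$-action is now free and comes from the second factor. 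So each summand $M\ot^{\bf L}_{A}P_{i}$ is a summand of a finite direct sum of copies of $M\ot_{k}A$.

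Next I would use that for $M\in\D^{\bb}(A)$, the total cohomology $\bop_n \h^{n}(M)$ is finite-dimensional over $k$, so $M$ is perfect as a $k$-complex: it is quasi-isomorphic to a bounded complex of finite-dimensional vector spaces. Tensoring such a complex with $A$ over $k$ produces a bounded complex of finite free $A$-modules, hence an object of $\per(A)$. Therefore each term $M\ot^{\bf L}_{A}P_{i}$ lies in $\per(A)$, and since $\per(A)$ is closed under cones and direct summands, the finite iterated extension $M\ot^{\bf L}_{A}P_\bullet$ lies in $\per(A)$ as well. This yields $M\in\per(A)$.

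For the converse inclusion under the additional assumption that $A$ is proper, the argument is much softer. Properness means precisely that $\h^{*}(A)$ is finite-dimensional, i.e.\ $A$ itself belongs to $\D^{\bb}(A)$. Since $\D^{\bb}(A)$ is a thick triangulated subcategory of $\D(A)$ (it is closed under shifts, cones, and direct summands, as finite-dimensionality of total cohomology is preserved by these operations) and $\per(A)$ is by definition the smallest such subcategory containing $A$, the inclusion $\per(A)\subset\D^{\bb}(A)$ follows immediately. Combined with the first part, this gives the equality $\D^{\bb}(A)=\per(A)$.

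The main obstacle, if any, is being careful with the bimodule structures in the first step: one needs to track that $A\ot_{k}A$ is viewed as a bimodule with the left $A$-action on the first tensor factor used to contract with $M$, so that the resulting $M\ot_{k}A$ retains its right $A$-module structure via the second factor. Everything else is a formal application of thickness and the compactness criterion.
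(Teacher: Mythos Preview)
Your proof is correct and is the standard argument for this well-known result. The paper itself does not supply a proof of this lemma, simply labelling it as well-known, so there is nothing to compare against; your approach via the bimodule resolution coming from smoothness together with the thickness of $\D^{\bb}(A)$ is exactly the usual one found in the literature.

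One minor stylistic point: rather than speaking of a ``resolution $P_\bullet$'' with terms $P_i$, it is slightly cleaner to phrase the argument as follows. The functor $M\ot^{\bf L}_A ?:\D(A\ot_k A^{\rm op})\to\D(A)$ is triangulated and preserves direct summands, hence sends $\per(A\ot_k A^{\rm op})=\thick(A\ot_k A)$ into $\thick(M\ot_k A)$. Since $A$ lies in the source by smoothness, $M\simeq M\ot^{\bf L}_A A$ lies in $\thick(M\ot_k A)\subset\per(A)$, the last inclusion using that $M$ is perfect over $k$. This avoids having to pin down exactly what kind of filtered object $P_\bullet$ is, but the content is identical to what you wrote.
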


\begin{Lem} \label{Lem:totalcoh}
Let $A$ be a dg $k$-algebra and let $X$ be a dg $A$-module. Then the following are equivalent.
\begin{enumerate}[\rm(1)]
\item $X\in \D^{\bb}(A)$.
\item For any $P\in \per(A)$, $\RshHom_{A}(P, X)$ has finite dimensional total cohomology.
\end{enumerate}
\end{Lem}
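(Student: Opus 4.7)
The plan is to prove the two implications by a short thick-subcategory argument, using the perfect generator $A$ itself on both sides.

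For the direction (2) $\Rightarrow$ (1), I would simply specialize to $P = A \in \per(A)$. Then $\RshHom_{A}(A, X) \cong X$ as dg $k$-modules, so the total cohomology $\bigoplus_{n}\h^{n}(X)$ is finite dimensional by hypothesis, which is exactly $X \in \D^{\bb}(A)$.

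For the direction (1) $\Rightarrow$ (2), the natural approach is a d\'evissage on $P$. Define
\[
\T = \{P \in \per(A) \mid \RshHom_{A}(P, X) \text{ has finite dimensional total cohomology}\}.
\]
Since $\RshHom_{A}(-, X)$ is a cohomological functor that commutes with shifts and sends triangles to triangles, and since finite dimensionality of total cohomology is preserved by shifts, taken to be closed under extensions (via the long exact sequence) and under passage to direct summands, $\T$ is a thick subcategory of $\per(A)$. Because $\RshHom_{A}(A, X) \cong X$ and $X \in \D^{\bb}(A)$, we have $A \in \T$. Hence $\T \supseteq \thick(A) = \per(A)$, proving (2).

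The argument is routine and I do not expect any real obstacle; the only point to be careful about is verifying that the class $\T$ is closed under summands (clear since total cohomology of a summand is a subspace of the total cohomology of the whole) and under extensions (clear from the long exact sequence in cohomology applied to the triangle $\RshHom_{A}(P', X) \to \RshHom_{A}(P, X) \to \RshHom_{A}(P'', X) \to$ induced by a triangle $P'' \to P \to P' \to$ in $\per(A)$).
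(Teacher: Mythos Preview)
Your proof is correct and is exactly the standard d\'evissage argument one would expect. The paper itself does not supply a proof of this lemma; it is listed among ``well-known'' observations and stated without proof, so there is nothing to compare against beyond noting that your argument is the canonical one.
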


\begin{Lem}\cite{Keller94} \label{Lem:D|per}
Let $A, B$ be two dg $k$-algebras. Let $F:\D(A) \ra \D(B)$ be a triangle functor, which commutes with infinite direct sums and restricts to a functor $F|_{\per}: \per (A) \ra \per (B)$. Then $F$ is a triangle equivalence if and only if $F|_{\per}$ is a triangle equivalence. Moreover, in this case, $F$ also restricts to an equivalence $F: \D^{\rm b}(A)\xra{\simeq} \D^{\rm b}(B)$.
\end{Lem}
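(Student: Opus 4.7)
The plan is to exploit the fact that $A$ is a compact generator of $\D(A)$ with $\per(A)$ equal to the full subcategory of compact objects, so that $\D(A) = \Loc(\per(A))$, and similarly for $B$. Combined with the intrinsic characterization of $\D^{\bb}(A)$ provided by Lemma~\ref{Lem:totalcoh}, everything reduces to transferring information between the compact level and the whole category.

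For the direction $(\Rightarrow)$, I would argue that a triangle equivalence $F$ commuting with coproducts automatically preserves compact objects: a quasi-inverse $G$ of $F$ also commutes with coproducts, and compactness of $X$ is the intrinsic property that $\Hom(X,-)$ commutes with coproducts. Hence $F$ sends $\per(A)$ into $\per(B)$ and $G$ sends $\per(B)$ into $\per(A)$, so the hypothesis that $F$ restricts to $\per(A) \to \per(B)$ forces the restriction to be a triangle equivalence.

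For the direction $(\Leftarrow)$, I would prove fully faithfulness of $F$ by a two-step d\'evissage. First, fix $P \in \per(A)$; the full subcategory of objects $X \in \D(A)$ for which $\Hom_{\D(A)}(P, X[n]) \to \Hom_{\D(B)}(FP, FX[n])$ is an isomorphism for every $n \in \Z$ is closed under triangles and under arbitrary coproducts (since both $P$ and $FP$ are compact and $F$ commutes with coproducts), contains $\per(A)$ by hypothesis, and therefore equals $\D(A) = \Loc(\per(A))$. Second, fix $Y \in \D(A)$; the full subcategory of $X \in \D(A)$ for which $\Hom_{\D(A)}(X, Y[n]) \to \Hom_{\D(B)}(FX, FY[n])$ is an isomorphism for every $n$ is closed under triangles and under coproducts (coproducts in the first argument of $\Hom$ become products on both sides, using that $F$ preserves coproducts), contains $\per(A)$ by the previous step, and hence equals $\D(A)$. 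Essential surjectivity then follows because the essential image of $F$ is a triangulated subcategory of $\D(B)$ closed under coproducts that contains $F(\per(A))$; since $F|_{\per}$ is essentially surjective, this essential image contains $\per(B)$, and therefore contains $\Loc(\per(B)) = \D(B)$.

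Finally, for the restriction to $\D^{\bb}$, I would invoke Lemma~\ref{Lem:totalcoh}: an object $X$ lies in $\D^{\bb}(A)$ iff $\RshHom_A(P, X)$ has finite-dimensional total cohomology for every $P \in \per(A)$. Fully faithfulness of $F$ gives $\RshHom_A(P, X) \simeq \RshHom_B(FP, FX)$, and essential surjectivity of $F|_{\per}$ means $FP$ ranges over $\per(B)$ as $P$ ranges over $\per(A)$; hence $X \in \D^{\bb}(A)$ iff $FX \in \D^{\bb}(B)$. The most delicate step is the d\'evissage in $(\Leftarrow)$, particularly verifying closure of the relevant subcategories under \emph{arbitrary} coproducts — this relies on compactness of objects of $\per(A)$ in the first step, and on the interplay between coproduct-preservation of $F$ and the contravariance of $\Hom$ in its first variable in the second.
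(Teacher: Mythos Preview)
Your proof is correct. The paper itself does not supply a proof of this lemma but simply attributes it to \cite{Keller94}; your d\'evissage argument --- reducing to compact generators via localising subcategories and then invoking Lemma~\ref{Lem:totalcoh} for the $\D^{\bb}$-restriction --- is precisely the standard argument underlying Keller's result, so there is nothing to compare against beyond the reference.

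One minor remark: in your treatment of essential surjectivity you implicitly use that the essential image of $F$ is a \emph{triangulated} subcategory, which requires the already-established full faithfulness of $F$ (to lift morphisms and hence cones). You do place this step after full faithfulness, so the logic is sound, but it may be worth making that dependence explicit.
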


\begin{Lem}\cite[Lemma 2.7]{AKLY}\label{Lem:pertoDb}
Let $A, B$ be two dg $k$-algebras. Let $F:\D(A) \ra \D(B)$ be a triangle functor with a right adjoint $G:\D(B)\ra \D(A)$. If $F(\per (A))\subset\per(B)$, then $G(\D^{\bb}(B))\subset \D^{\bb}(A)$.
\end{Lem}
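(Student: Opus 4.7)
The plan is to reduce the statement to an application of the characterization of $\D^{\bb}$-objects given in Lemma~\ref{Lem:totalcoh} via the adjunction $(F,G)$. Pick any $Y \in \D^{\bb}(B)$; the goal is to show $G(Y) \in \D^{\bb}(A)$. By Lemma~\ref{Lem:totalcoh} applied to $A$, it suffices to verify that $\RshHom_A(P, G(Y))$ has finite-dimensional total cohomology for every $P \in \per(A)$.

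Fix such a $P$. The adjunction $(F, G)$ between the derived categories yields a natural isomorphism
\[
\RshHom_A(P, G(Y)) \;\cong\; \RshHom_B(F(P), Y)
\]
of $k$-complexes, up to quasi-isomorphism (one can either invoke that $(F,G)$ lifts to a dg adjunction so that the statement holds at the level of the underlying $k$-complexes, or apply $\RHom_k(k,-)$ to an adjunction in $\D(k)$; in any case, the total cohomology on both sides agrees).

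By hypothesis, $F(P) \in \per(B)$, and by assumption $Y \in \D^{\bb}(B)$. Applying Lemma~\ref{Lem:totalcoh} to $B$ with the perfect object $F(P)$ and the bounded object $Y$, the right-hand side $\RshHom_B(F(P), Y)$ has finite-dimensional total cohomology. Hence so does the left-hand side $\RshHom_A(P, G(Y))$. Since this holds for every $P \in \per(A)$, a second application of Lemma~\ref{Lem:totalcoh}, this time for the algebra $A$, gives $G(Y) \in \D^{\bb}(A)$, as desired.

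The main potential subtlety, and hence the only non-routine point of the argument, is the compatibility of the adjunction isomorphism with the $k$-complex structure needed to invoke Lemma~\ref{Lem:totalcoh}. This is harmless because the characterization in Lemma~\ref{Lem:totalcoh} only concerns total cohomology, which is preserved by any natural isomorphism in $\D(k)$; beyond that, the proof is formal.
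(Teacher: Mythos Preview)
Your proof is correct and follows essentially the same approach as the paper. The only minor difference is that the paper applies the adjunction degree by degree via $\h^{n}\RshHom_{A}(P, G(Y))=\Hom_{\D(A)}(P, G(Y)[n])=\Hom_{\D(B)}(F(P), Y[n])=\h^{n}\RshHom_{B}(F(P), Y)$, which sidesteps the subtlety you flag about lifting the adjunction to the level of complexes; your version is still fine since, as you note, only total cohomology is needed.
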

\begin{proof}
Let $Y\in \D^{\bb}(B)$. By Lemma \ref{Lem:totalcoh}, it suffices to show that, for any $P\in \per(A)$, $\RshHom_{A}(P, G(Y))$ has finite dimensional total cohomology. Since $(F, G)$ is an adjoint pair, we have
\[ \h^{n}\RshHom_{A}(P, G(Y))=\Hom_{\D(A)}(P, G(Y)[n])=\Hom_{\D(B)}(F(P), Y[n])=\h^{n}\RshHom_{B}(F(P), Y).  \]
Since $F(P)\in \per(B)$ and $Y\in \D^{\bb}(B)$,  we have $\RshHom_{B}(F(P), Y)$ has finite dimensional total cohomology by Lemma \ref{Lem:totalcoh}, and so does $\RshHom_{P}(P, G(Y))$.
\end{proof}

\subsection{Recollements}\label{Recollements}

In this section we recall some results on recollement.
Recollements, introduced by Beilinson, Bernstein and Deligne in the 1980s, provide a powerful tool for studying problems in triangulated categories and algebraic geometry.
Roughly speaking, a recollement consists of three triangulated categories linked by two triangulated functors both of which have left and right adjoint functors. The notion of recollement is an analogue of an exact sequence for triangulated categories, which generalizes derived equivalences and is closely related to homological ring epimorphisms.

\begin{Def}[\cite{BBD}]~\label{recollement}
Let $\mathcal{T^{'}}$,~~$\mathcal{T}$ and $\mathcal{T^{''}}$ be triangulated categories. A \emph{recollement} of $\mathcal{T}$ in terms of $\mathcal{T^{'}}$ and $\mathcal{T^{''}}$ is the following diagram$:$
  \begin{equation}\label{R}
  \xymatrixcolsep{4pc}\xymatrix{\mathcal{T^{'}} \ar[r]^{i_*} &\mathcal{T} \ar@/_1pc/[l]_{i^*} \ar@/^1pc/[l]^{i^!} \ar[r]^{j^*}  &\mathcal{T^{''}} \ar@/^-1pc/[l]_{j_!} \ar@/^1pc/[l]^{j_{*}}
  }
  \end{equation}
satisfying

  \begin{enumerate}[\rm(1)]
    \item $(i^*,~i_*,~i^!)$ and $(j_!,~j^*,~j_*)$ are adjoint triples$;$
    \item $i_*,~j_*,~j_!$ are fully faithful$;$
    \item $j^*\circ i_*=0;$
    \item for any object $X$ in $\mathcal{T}$, there are two  triangles$:$
    \[ \xymatrix{i_*i^!X \ar[r] &X \ar[r] & j_*j^*X \ar[r]& i_{*}i^{!}X[1] } \]
    \[ \xymatrix{j_!j^*X \ar[r] &X \ar[r] & i_*i^*X \ar[r]&j_!j^*X[1]} \]
    where the morphisms are induced by the units and conuits of the adjunctions.
  \end{enumerate}
 \end{Def}

 We give a well-known example of recollement, which follows from \cite{CPS}.
  \begin{Ex}\label{Ex:idem}
  Let $A$ be a ring and let $e\in A$ be an idempotent. Assume the ideal $AeA$ of $A$ is stratifying (\emph{i.e.} $Ae\otimes^\mathbf{L}_{eAe}{eA}\rightarrow AeA$ is an isomorphism). Then the associated ring epimorphism $A \ra AeA$ induces a recollement of derived module categories of the  form
    \[ \xymatrixcolsep{4pc}\xymatrix{\D(A/AeA) \ar[r]^{i_{*}=\rm{inc.}} &\D(A) \ar@/_1.5pc/[l]_{i^{*}=?\ot_{A}^{\bf L}A/AeA} \ar@/^1.5pc/[l]^{i^{!}=\RshHom_{A}(A/AeA, ?)} \ar[r]^{j^{*}=?\ot_{A}^{\bf L}Ae}  &\D(eAe) \ar@/^-1.5pc/[l]_{j_{!}=?\ot_{eAe}^{\bf L}eA} \ar@/^1.5pc/[l]^{j_{*}=\RshHom_{eA e}(Ae, ?)}
  }. \]

 \end{Ex}
Most examples of recollements in the literature are known to be of this form, up to derived equivalence.  We
give a general version of Example 1.6 in the general dg setting in the next section.

Let $\X, \Y$ and $\ZZ$ be three full subcategories of a triangulated category $\T$. We call the triple $(\X, \Y, \ZZ)$ a \emph{TTF triple} of $\T$, if we have two $t$-structures $\T=\X\perp\Y$ and $\T=\Y\perp\ZZ$. It immediately follows that if  $(\X, \Y,\ZZ)$ is a TTF triple, then $\X, \Y$ and $\ZZ$ are all triangulated subcategories of $\T$.
Note that given a recollement \eqref{R}, we have an induced TTF triple $(j_{!}(\T''), i_{*}(\T')) , j_{*}(\T''))$. We say two recollements are equivalent if the TTF triples associated to them coincide.
The following observation is useful (see for example \cite[Section 1.4.4]{BBD} or \cite[Section 9.2]{N3}).
\begin{Prop}\label{Prop:TTF}
Let $\T$ be a $k$-linear triangulated category. There is a one-to-one correspondence between equivalence classes of recollements of $\T$ and TTF triples of $\T$.
\end{Prop}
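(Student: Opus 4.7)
The plan is to establish mutually inverse constructions between recollements (taken up to equivalence) and TTF triples on $\T$. By the very definition of equivalence of recollements, the assignment sending a recollement to its associated TTF triple $(j_!(\T''), i_*(\T'), j_*(\T''))$ is automatically well-defined and injective on equivalence classes, so the real work is to check that this assignment lands in TTF triples and that it is surjective. Since the result is attributed to \cite[Section 1.4.4]{BBD} and \cite[Section 9.2]{N3}, I would present it as a sketch.

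For the forward direction, given a recollement as in \eqref{R}, I set $\X := j_!(\T'')$, $\Y := i_*(\T')$, $\ZZ := j_*(\T'')$; these are triangulated subcategories since the three functors are fully faithful and triangulated. I would verify that both $\T = \X \perp \Y$ and $\T = \Y \perp \ZZ$ are t-structures. The orthogonality $\Hom(\X, \Y) = 0$ follows from the adjunction $(j_!, j^*)$ together with $j^* i_* = 0$, and the required truncation triangle is precisely the second triangle of axiom (4) in Definition~\ref{recollement}. The pair $(\Y, \ZZ)$ is handled symmetrically using the adjunction $(j^*, j_*)$ and the first triangle.

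Conversely, given a TTF triple $(\X, \Y, \ZZ)$, I would build a recollement as follows. Take $\T' := \Y$ with $i_*$ the inclusion; its left adjoint $i^*$ and right adjoint $i^!$ are the truncation functors $\T \to \Y$ coming from the t-structures $\T = \X \perp \Y$ and $\T = \Y \perp \ZZ$ respectively. For the quotient side, both composites $\X \hookrightarrow \T \to \T/\Y$ and $\ZZ \hookrightarrow \T \to \T/\Y$ are triangle equivalences (a standard consequence of having a semi-orthogonal decomposition), so I would set $\T'' := \T/\Y$, let $j^*$ be the Verdier quotient functor, and define $j_!, j_*$ to be the inverse equivalences $\T'' \simeq \X \hookrightarrow \T$ and $\T'' \simeq \ZZ \hookrightarrow \T$. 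Under these identifications $j^*$ corresponds to the right adjoint of the inclusion of $\X$ and to the left adjoint of the inclusion of $\ZZ$, delivering the two adjunctions; the identity $j^* i_* = 0$ is immediate, and the recollement triangles of axiom (4) are precisely the truncation triangles of the two t-structures.

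The main obstacle I anticipate lies in the second construction, namely assembling a single functor $j^*$ that simultaneously serves as right adjoint to $j_!$ and as left adjoint to $j_*$ when $j_!$ and $j_*$ a priori land in two different subcategories of $\T$. Using the Verdier quotient $\T/\Y$ as the canonical choice of $\T''$ is the cleanest resolution, since it makes $j^*$ intrinsic and reduces both adjunctions to the universal property of the quotient; the remaining verifications are then routine t-structure manipulations.
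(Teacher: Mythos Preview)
The paper does not actually prove this proposition; it merely records it as a known fact with citations to \cite[Section 1.4.4]{BBD} and \cite[Section 9.2]{N3}. Your sketch is precisely the standard argument one finds in those references and is correct: the forward map sending a recollement to the TTF triple $(j_!(\T''), i_*(\T'), j_*(\T''))$, together with the inverse built from the truncation adjoints of the two t-structures and the Verdier quotient $\T/\Y$, is exactly how the bijection is established, and your identification of the one genuine subtlety (producing a single $j^*$ that is simultaneously right adjoint to $j_!$ and left adjoint to $j_*$) is on point.
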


We need the following lemma for later use.
\begin{Lem}\label{Lem:adjrecol}
Let \eqref{R} be a recollement.  Then
\begin{enumerate}[\rm (1)]
\item $i^{*}$ admits a left adjoint $F$ if and only if $j_{!}$ admits a left adjoint. Moreover, in this case, $F$ is fully faithful, and $(F(\T'), j_{!}(\T''), i_{*}(\T'))$ is a TTF triple.
\item $i^{!}$ admits a right adjoint $G$ if and only if $j_{*}$ admits a right adjoint. Moreover, in this case, $G$ is fully faithful and $(i_{*}(\T'), j_{*}(\T''), G(\T'))$ is a TTF triple.
\end{enumerate}
\end{Lem}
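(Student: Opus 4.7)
The plan is to prove (1) directly; part (2) follows by the dual argument (swap left/right adjoints throughout). The whole approach rests on building a new TTF triple and invoking Proposition~\ref{Prop:TTF}.

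Suppose $F\dashv i^{*}$. The first task is to identify $i^{*}$ as (up to equivalence) a Verdier quotient by $j_{!}(\T'')$: passing to left adjoints in $j^{*}i_{*}=0$ yields $i^{*}j_{!}=0$, and the recollement triangle $j_{!}j^{*}X\to X\to i_{*}i^{*}X$ shows $\ke(i^{*})=j_{!}(\T'')$; together with $i^{*}i_{*}\cong\mathrm{id}_{\T'}$, this produces a triangle equivalence $\T/j_{!}(\T'')\simeq\T'$. Since any adjoint of a Verdier quotient is fully faithful, $F$ is automatically fully faithful, i.e.\ $i^{*}F\cong\mathrm{id}_{\T'}$.

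The second task is to obtain the new semiorthogonal decomposition. The identity $\Hom_{\T}(FX,j_{!}Y)\cong\Hom_{\T'}(X,i^{*}j_{!}Y)=0$ gives $F(\T')\perp j_{!}(\T'')$. For each $Z\in\T$, complete the counit $\epsilon_{Z}\colon Fi^{*}Z\to Z$ to a triangle $Fi^{*}Z\to Z\to C_{Z}\to Fi^{*}Z[1]$ and apply $i^{*}$: the triangle identity $(i^{*}\epsilon)\circ(\eta i^{*})=\mathrm{id}$ combined with $i^{*}F\cong\mathrm{id}$ forces $i^{*}\epsilon_{Z}$ to be an isomorphism, so $i^{*}C_{Z}=0$ and hence $C_{Z}\in j_{!}(\T'')$. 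This yields $\T=F(\T')\perp j_{!}(\T'')$, and combined with the original $\T=j_{!}(\T'')\perp i_{*}(\T')$, the triple $(F(\T'),j_{!}(\T''),i_{*}(\T'))$ is a TTF triple as claimed.

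Finally, Proposition~\ref{Prop:TTF} turns this TTF triple into a recollement in which $j_{!}\colon\T''\simeq j_{!}(\T'')\hookrightarrow\T$ is the middle inclusion, whose structural left adjoint is the sought left adjoint of $j_{!}$; explicitly $LZ:=j^{*}C_{Z}$ works, with $\Hom_{\T''}(LZ,Y)\cong\Hom_{\T}(Z,j_{!}Y)$ obtained by applying $\Hom(-,j_{!}Y)$ to the defining triangle and using $j^{*}j_{!}\cong\mathrm{id}$ together with the orthogonality from Step~2. Conversely, given $L\dashv j_{!}$, the reflection triangle $K_{Z}\to Z\to j_{!}LZ$ identifies $\ke(L)={}^{\perp}j_{!}(\T'')$ and so produces the TTF triple $(\ke(L),j_{!}(\T''),i_{*}(\T'))$; the composition $\T'\simeq\T/j_{!}(\T'')\simeq\ke(L)\hookrightarrow\T$ is then the required left adjoint of $i^{*}$. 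The main technical obstacle is the verification $C_{Z}\in j_{!}(\T'')$ in Step~2 — precisely the point at which the Verdier-quotient reformulation of $i^{*}$, and thus the forced full faithfulness of $F$, becomes indispensable.
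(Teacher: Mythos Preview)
Your argument is correct. The paper itself does not give a proof but simply refers to \cite[Proposition 9.1.18]{N3} and \cite[Theorem 2.5]{M}; what you have written is essentially a self-contained version of the standard argument behind those references, organised around Proposition~\ref{Prop:TTF}. One small remark: the step ``any adjoint of a Verdier quotient is fully faithful'' is true, but in this recollement setting you can justify it more directly without invoking Verdier quotients at all. Since $i_{*}$ is fully faithful we have $i^{*}i_{*}\cong\mathrm{id}_{\T'}$, and then for $X,X'\in\T'$ the chain
\[
\Hom_{\T'}(i^{*}FX,X')\cong\Hom_{\T}(FX,i_{*}X')\cong\Hom_{\T'}(X,i^{*}i_{*}X')\cong\Hom_{\T'}(X,X')
\]
is induced by precomposition with the unit $\eta_{X}$, so Yoneda gives $i^{*}F\cong\mathrm{id}_{\T'}$. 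This avoids any appeal to the calculus of fractions and makes the ``indispensable'' step you flag entirely elementary. The rest of your proof --- building the triangle $Fi^{*}Z\to Z\to C_{Z}$, the resulting TTF triple, the explicit formula $LZ=j^{*}C_{Z}$, and the converse via $K_{Z}\to Z\to j_{!}LZ$ --- is clean and complete.
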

\begin{proof}
See for example \cite[Proposition 9.1.18]{N3} and \cite[Theorem 2.5]{M}, and their duals.
\end{proof}

\subsection{Homological epimorphisms of dg algebras}
Homological epimorphisms were introduced by Geigle and Lenzing in \cite{GL} to characterize when a homomorphism of rings $\phi: R\to S$ induces a full embedding of bounded derived categories $\D^{\rm b}(\mod S) \ra \D^{\rm b}(\mod R)$. The dg generalization of this notion was studied by \cite{P} and \cite{NS}.
In this section, we recall some basic facts on homological epimorphisms of dg algebras.

Let $A$ and $B$ be two dg $k$-algebras and let $f:A\to B$ be a homomorphism of dg algebras.
We may regard $B$ as a dg $A$-bimodule and any dg $B$-module $M$ has a natural dg $A$-module structure induced by $f$.
We call $f$ a \emph{homological epimorphism} if the induced functor $f_{*}:\D(B)\ra \D(A)$ is fully faithful. Let $X\ra A\xra{f} B \ra X[1] $ be the triangle in $\D(A\ot_{k}A^{\rm op})$ induced by $f$. The following result characterizes when a dg homomorphism is a homological epimorphism.

\begin{Lem}\cite[Lemma 4]{NS}  \label{Lem:homepi}
Let $f:A\ra B$ be a homomorphism of dg algebra. The following are equivalent
\begin{enumerate}[\rm (1)]
\item $f$ is a homological epimorphism;
\item $B\ot_{A}^{\bf L}B \cong B$ in $\D(B)$;
\item There is a recollement
    \[ \xymatrixcolsep{4pc}\xymatrix{\D(B) \ar[r]^{f_{*}} &\D(A) \ar@/_1.5pc/[l]_{i^{*}=?\ot_{A}^{\bf L}B} \ar@/^1.5pc/[l]^{i^{!}=\RshHom_{A}(B, ?)} \ar[r]^{j^{*}=?\ot_{A}^{\bf L}X}  &\Loc(X) \ar@/^-1.5pc/[l]_{j_{!}={\rm inc.}} \ar@/^1.5pc/[l]  },\]
  where $\Loc(X)$ is the smallest triangulated subcategory of $\D(A)$ containing $X$ and closed under infinite direct sums.
\end{enumerate}
\end{Lem}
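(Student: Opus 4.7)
The plan is to exploit the adjunction between the restriction functor $f_*: \D(B) \to \D(A)$ and the derived extension of scalars $L := ? \ot_A^{\bf L} B : \D(A) \to \D(B)$, together with the fact that $B$ is a compact generator of $\D(B)$.

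For $(1) \Leftrightarrow (2)$: the functor $f_*$ is fully faithful iff the counit $L f_* \to \mathrm{id}$ is an isomorphism, and at $N \in \D(B)$ this counit is the multiplication map $N \ot_A^{\bf L} B \to N$. Specializing to $N = B$ gives $(1) \Rightarrow (2)$. For the converse, the full subcategory of those $N$ on which the counit is an isomorphism is triangulated and closed under arbitrary coproducts; if it contains the compact generator $B$, it must be all of $\D(B)$.

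The implication $(3) \Rightarrow (1)$ is part of the definition of a recollement, since $i_* = f_*$ is required to be fully faithful. The substance is $(2) \Rightarrow (3)$. By Proposition~\ref{Prop:TTF} it suffices to produce a TTF triple of $\D(A)$ whose middle class is $f_*(\D(B))$; I would take
\[\big(\,\Loc(X),\ f_*(\D(B)),\ \ZZ\,\big),\qquad \ZZ := f_*(\D(B))^{\perp}.\]
For the left t-structure, tensoring the defining triangle $X \to A \to B \to X[1]$ with any $M \in \D(A)$ yields a functorial triangle
\[ M \ot_A^{\bf L} X \to M \to M \ot_A^{\bf L} B \to (M \ot_A^{\bf L} X)[1], \]
whose left term lies in $\Loc(X)$ and whose right term lies in $f_*(\D(B))$, giving the required decomposition. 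For the right t-structure I would use the counit triangle $f_*\RshHom_A(B,M) \to M \to M'$ for the adjunction $(f_*,\RshHom_A(B,?))$. Once the TTF triple is in place, the three adjoints $i^* = ?\ot_A^{\bf L} B$, $i^! = \RshHom_A(B,?)$ and $j_! = \mathrm{inc.}$ can be read off from standard tensor--hom adjunctions, while $j^* = ?\ot_A^{\bf L} X$ is identified with the right adjoint of $j_!$ by comparing the two candidate approximation triangles $M \ot_A^{\bf L} X \to M \to M \ot_A^{\bf L} B$.

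The crux is the semi-orthogonality $\Hom_{\D(A)}(\Loc(X), f_*(\D(B))) = 0$ needed for the left t-structure. By closure of $\Loc(X)$ under triangles, shifts and coproducts, this reduces to showing $\Hom_{\D(A)}(X, f_*(N)) = 0$ for every $N \in \D(B)$. Via the $(L, f_*)$-adjunction this equals $\Hom_{\D(B)}(X \ot_A^{\bf L} B, N)$, so it is enough to prove $X \ot_A^{\bf L} B = 0$. Applying $?\ot_A^{\bf L} B$ to the triangle $X \to A \to B$ yields $X \ot_A^{\bf L} B \to B \to B \ot_A^{\bf L} B$, and hypothesis $(2)$ asserts that the second map is an isomorphism, forcing the vanishing.
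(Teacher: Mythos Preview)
Your argument is correct. Note, however, that the paper does not supply its own proof of this lemma: it is quoted verbatim from \cite[Lemma~4]{NS} and used as a black box, so there is no ``paper's proof'' to compare against.

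That said, your approach is exactly the expected one. The equivalence $(1)\Leftrightarrow(2)$ via the counit of $(?\ot_A^{\bf L}B,\,f_*)$ together with the compact-generator trick is standard, and your route to $(2)\Rightarrow(3)$ through the TTF triple $(\Loc(X),\,f_*(\D(B)),\,f_*(\D(B))^{\perp})$ is the natural one. The key computation $X\ot_A^{\bf L}B=0$ is precisely what makes the left semi-orthogonal decomposition work, and you have justified it correctly: from the triangle $X\ot_A^{\bf L}B\to B\to B\ot_A^{\bf L}B$ the second map is a one-sided inverse to the multiplication $B\ot_A^{\bf L}B\to B$, hence an isomorphism once $(2)$ holds. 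One small point worth making explicit is why $M\ot_A^{\bf L}X\in\Loc(X)$ for \emph{every} $M\in\D(A)$: the class of such $M$ is localising and contains $A$, hence is all of $\D(A)$. With that remark your decomposition triangle $M\ot_A^{\bf L}X\to M\to M\ot_A^{\bf L}B$ really does land in the two pieces, and the identification of $j^*$ with $?\ot_A^{\bf L}X$ via uniqueness of truncation triangles is clean.
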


The following example generalizes Example \ref{Ex:idem}.
\begin{Ex}\cite[Proposition 2.10]{KY1}
Let $A$ be a $k$-algebra  and $e\in A$ be an idempotent. Then there is a dg $k$-algebra $B$ and a morphism of dg algebras $f:A\ra B$, such that there is a  recollement
\[ \xymatrixcolsep{4pc}\xymatrix{
\D(B) \ar[r]^{f_{*}}
&\D(A) \ar@/_1.5pc/[l]_{?\ot_{A}^{\bf L}B} \ar@/^1.5pc/[l]^{\RshHom_{A}(B, ?)}
 \ar[r]^{?\ot_{A}^{\bf L}Ae}
&\D(eAe) \ar@/^-1.5pc/[l]_{?\ot_{eAe}^{\bf L}eA} \ar@/^1.5pc/[l]^{\RshHom_{eA e}(Ae, ?)}
  }. \]
  Moreover, $B$ is also non-positive with $\h^{0}(B)=A/AeA$.
 \end{Ex}


\subsection{Compactly generated triangulated categories}\label{Compactly generated triangulated categories}

Let $\T$ be a triangulated category with infinite direct sums. An object $X\in \T$ is called \emph{compact}, if the functor $\Hom_{\T}(X, ?)$ commutes with infinite direct sums.  We denote by $\T^{\rm c}$ the subcategory of $\T$ consisting of compact objects. It is easy to see $\T^{\rm c}$ is a thick subcategory,  that is, it is a  triangulated subcategory of $\T$ closed under direct summands.

We say that $\T$ is \emph{compactly generated} if
$\T$ admits a set of small generators $\cal C$, that is, if
there is a subset $\C$ of compact objects, such that $\T$ coincides with the localising  subcategory $\Loc(\C)$ (which is the smallest sub triangulated category of $\T$ containing $\C$ and closed under infinite sums).

 In a compactly generated triangulated category, all compact objects can be obtained from a set of compact generators, that is, we have the following.

\begin{Thm}\cite[Section 2.2]{Neeman92}\cite[Theorem 5.3]{Keller94}
Let $\T$ be a compactly generated triangulated category and let $\cal C$ be a set of compact generators. Then $\T^{\rm c}=\thick(\cal C)$.
\end{Thm}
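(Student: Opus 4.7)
The plan is to prove the two inclusions $\thick(\C) \subset \T^{\rm c}$ and $\T^{\rm c} \subset \thick(\C)$ separately. The first is essentially formal: every $C \in \C$ is compact by hypothesis, and $\T^{\rm c}$ is a thick subcategory of $\T$ (it is triangulated because $\Hom_{\T}(-,-)$ commutes with cones and shifts, and closed under direct summands by the standard retract argument applied to $\Hom_{\T}(-, \bop_{i} Y_{i})$). Hence any triangulated subcategory closed under summands and generated by $\C$ lies inside $\T^{\rm c}$, which takes care of one direction.

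For the hard inclusion, I would build a cellular approximation of an arbitrary $X \in \T$ in the style of the Brown representability construction. Namely, set $X_{0} = \bop_{C \in \C,\, k \in \Z,\, \varphi} C[k]$, indexed by all maps $\varphi : C[k] \to X$, with the obvious map $g_{0}: X_{0} \to X$; then inductively define $X_{n+1}$ via a triangle $Z_{n} \to X_{n} \xra{f_{n}} X_{n+1}$, where $Z_{n}$ is a coproduct of shifts of generators chosen so that $g_{n}$ extends to $g_{n+1}: X_{n+1} \to X$ which is surjective on $\Hom_{\T}(C[k],-)$ and whose kernel becomes trivial in the limit. Because $\C$ is a set of compact generators, testing against $\Hom_{\T}(C[k], -)$ shows that the induced map $\mathrm{hocolim}\, X_{n} \to X$ is an isomorphism, where $\mathrm{hocolim}\, X_{n}$ is defined by the triangle $\bop X_{n} \xra{1 - \mathrm{shift}} \bop X_{n} \to \mathrm{hocolim}\, X_{n}$.

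Now assume $X \in \T^{\rm c}$. Compactness of $X$ yields
\[ \Hom_{\T}(X, X) \;\cong\; \Hom_{\T}(X, \mathrm{hocolim}\, X_{n}) \;\cong\; \mathrm{colim}\, \Hom_{\T}(X, X_{n}), \]
so $\mathrm{id}_{X}$ factors as $X \xra{s} X_{N} \xra{g_{N}} X$ for some $N$, whence $X$ is a direct summand of $X_{N}$. To finish, I would replace $X_{N}$ by an object of $\thick(\C)$: although $X_{N}$ is built by $N$ successive cone attachments from coproducts of shifts of $\C$, these coproducts may be infinite, so a priori $X_{N}$ lies only in $\Loc(\C)$. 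Using compactness of $X$ once more, I would inductively factor $s : X \to X_{N}$ stage by stage through an object $X_{N}'$ built with the same cellular structure but where each infinite coproduct is replaced by a finite subsum carrying the relevant component of $s$; the octahedral axiom ensures these finite replacements assemble into an object of $\thick(\C)$ through which $s$ factors, making $X$ a summand of an object of $\thick(\C)$ and hence itself in $\thick(\C)$. I expect this last finite-subsum reduction to be the main obstacle, since propagating the factorisation coherently across all $N$ stages of the tower requires delicate bookkeeping, even though the ingredients (compactness and the octahedral axiom) are standard.
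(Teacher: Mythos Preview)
The paper does not prove this theorem at all: it is stated with references to \cite{Neeman92} and \cite{Keller94} and used as a black box. So there is no ``paper's proof'' to compare against beyond those citations.

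Your outline is essentially the argument in the cited references, particularly Neeman's. The easy inclusion is fine. For the hard inclusion, your cellular tower and the use of compactness to factor $\mathrm{id}_X$ through some finite stage $X_N$ are standard and correct; the identification $\Hom_{\T}(X,\mathrm{hocolim}\,X_n)\cong \mathrm{colim}\,\Hom_{\T}(X,X_n)$ for compact $X$ follows from applying $\Hom_{\T}(X,-)$ to the defining triangle and using injectivity of $1-\mathrm{shift}$ on $\bop\Hom_{\T}(X,X_n)$. You are also right that the real work is the final step: replacing the infinite coproducts in the tower by finite ones so that the object through which $\mathrm{id}_X$ factors actually lies in $\thick(\C)$. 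This is exactly the delicate induction Neeman carries out (using that a map from a compact object to a coproduct factors through a finite subsum, together with the octahedral axiom to propagate this through the cones). Your sketch of that step is accurate but, as you anticipate, writing it out cleanly is where all the content lies; the rest is formal.
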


\begin{Ex}
Let $A$ be a finite-dimensional algebra over a field $k$. Then the unbounded derived category $\D(A)$ is compactly generated by the free module $A$. So $\D(A)^{\rm c}=\per (A)$ coincides with $\K^{\rm b}(\proj A)$.
\end{Ex}

We need the following lemma later.
\begin{Lem}\cite[Theorems 4.1 and 5.1]{Neeman96} \label{Lem:N2}
Let $F: \cal C\ra \D$ be a triangle functor between two compactly generated triangulated categories. Assume $F$ admits a right adjoint $G: \D\ra  \C$.
Then the following conditions are equivalent.
\begin{enumerate}[\rm(1)]
\item $G$ admits a right adjoint.
\item $F$ sends compact objects to compact objects.
\item $G$ preserves coproducts.
\end{enumerate}
\end{Lem}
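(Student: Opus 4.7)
The plan is to prove the cycle of implications (1) $\Rightarrow$ (3) $\Rightarrow$ (2) $\Rightarrow$ (3) $\Rightarrow$ (1), where the two genuinely nontrivial steps are the Brown-representability-style passage from coproduct preservation to the existence of an adjoint, and the passage between compactness-preservation and coproduct-preservation, which is a pure adjunction-plus-compactness calculation.

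For (1) $\Rightarrow$ (3), I would simply observe that any functor that admits a right adjoint automatically commutes with arbitrary coproducts, because coproducts are colimits. For (3) $\Rightarrow$ (1), I would invoke Brown representability in the form that is available in a compactly generated triangulated category: a triangle functor out of a compactly generated triangulated category has a right adjoint if and only if it preserves arbitrary coproducts. Applied to $G$ (which is already a triangle functor since it is right adjoint to the triangle functor $F$), this yields the right adjoint of $G$ as soon as we know $G$ commutes with coproducts.

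The equivalence (2) $\Leftrightarrow$ (3) is where the main content sits. For (2) $\Rightarrow$ (3), let $\mathcal{S}$ be a set of compact generators of $\C$ and let $\{d_i\}_{i\in I}$ be a family in $\D$. For every $s\in\mathcal{S}$ the object $F(s)$ is compact in $\D$ by hypothesis, so using the $(F,G)$-adjunction one has the natural chain
\[
\Hom_{\C}\bigl(s,\bop_{i} G(d_i)\bigr)\xla{}\bop_{i}\Hom_{\C}(s,G(d_i))=\bop_{i}\Hom_{\D}(F(s),d_i)\cong \Hom_{\D}\bigl(F(s),\bop_{i} d_i\bigr)=\Hom_{\C}\bigl(s,G(\bop_{i}d_i)\bigr),
\]
and I would conclude by the standard argument that, since $\mathcal{S}$ generates $\C$, a morphism in $\C$ (here the canonical comparison $\bop_{i}G(d_i)\to G(\bop_{i}d_i)$) is an isomorphism as soon as it induces isomorphisms after applying $\Hom_{\C}(s,-)$ for all $s\in\mathcal{S}$ and all of their shifts. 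For (3) $\Rightarrow$ (2), I would take a compact object $c\in\C^{\rm c}$ and a family $\{d_i\}$ and run the same computation in the opposite direction: $\Hom_{\D}(F(c),\bop d_i)=\Hom_{\C}(c,G(\bop d_i))=\Hom_{\C}(c,\bop G(d_i))=\bop \Hom_{\C}(c,G(d_i))=\bop\Hom_{\D}(F(c),d_i)$, where the hypothesis that $G$ preserves coproducts is used in the second equality and compactness of $c$ in the third.

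The main obstacle I expect is the appeal to Brown representability in step (3) $\Rightarrow$ (1); this is where the hypothesis that $\D$ (and $\C$) are compactly generated is essential, rather than merely cosmetic, and it is the real content that is imported from Neeman's work. Once that theorem is granted, the remaining implications are formal manipulations with adjunctions and the defining property of compact objects.
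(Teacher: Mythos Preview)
Your argument is correct and is essentially the standard proof. Note, however, that the paper does not give its own proof of this lemma: it is stated with a citation to \cite[Theorems 4.1 and 5.1]{Neeman96} and used as a black box. So there is no ``paper's proof'' to compare against; you have simply reproduced the classical argument behind Neeman's result. One small stylistic point: your description of the logical structure as a ``cycle (1) $\Rightarrow$ (3) $\Rightarrow$ (2) $\Rightarrow$ (3) $\Rightarrow$ (1)'' is awkward, since what you actually prove is the pair of equivalences (1) $\Leftrightarrow$ (3) and (2) $\Leftrightarrow$ (3); it would be clearer to say so directly. Also, in the step (2) $\Rightarrow$ (3) you should make explicit that the first arrow in your chain is an isomorphism because $s$ is compact in $\C$, and that the composite isomorphism on Hom-sets is indeed the one induced by the canonical comparison map $\bop_i G(d_i)\to G(\bop_i d_i)$ (a routine naturality check), so that the generation argument applies.
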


The following result is important for our purpose.

\begin{Thm}\cite[Theorem 2.1]{Neeman92} \label{Thm:Ne}
 Let $\T$ be a compactly generated triangulated category and let $\cal R$ be a set of compact objects. Let $\T'':=\Loc(\cal R)$. Then there is a recollement
\[  \xymatrixcolsep{4pc}\xymatrix{\mathcal{T^{'}} \ar[r]^{i_*} &\mathcal{T} \ar@/_1pc/[l]_{i^*} \ar@/^1pc/[l]^{i^!} \ar[r]^{j^*}  &\mathcal{T^{''}} \ar@/^-1pc/[l]_{j_!} \ar@/^1pc/[l]^{j_{*}}
  } \]
  where $\T'$ is also compactly generated. Moreover, $i^{*}$ and $j_{!}$ induce a short exact sequence
  \[ 0 \la \T'^{\rm c} \xleftarrow{\overline{i^{*}}} \T^{\rm c}\xleftarrow{\overline{j_{!}}} \T''^{\rm c} \la 0,\]
that is, $\overline{j_{!}}$ is fully faithful, $\overline{i_{*}}\circ\overline{j_{!}}=0$ and $\overline{i_{*}}$ induces a triangle equivalence $\T^{\rm c}/\T''^{\rm c}\ra \T'^{\rm c}$ up to direct summands.
\end{Thm}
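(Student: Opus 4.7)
The plan is to follow Neeman's original argument, which rests on two pillars: Brown representability in compactly generated triangulated categories, and a Thomason--Neeman-style analysis of the compact parts.

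First, I would set $\T' := \T''^{\perp} = \{X \in \T \mid \Hom_{\T}(Y, X[n]) = 0 \text{ for all } Y \in \T'',\, n \in \Z\}$ and take $i_{*}\colon \T' \hookrightarrow \T$ and $j_{!}\colon \T'' \hookrightarrow \T$ to be the canonical inclusions. Both subcategories are closed under coproducts: $\T''$ by definition of $\Loc$, and $\T'$ because coproducts in $\T$ commute with $\Hom$ out of any object of $\mathcal{R}$, which is compact. Since $\mathcal{R}$ is a set of compact generators of $\T''$, Bousfield localisation at $\mathcal{R}$ exists in $\T$: for each $X \in \T$ one builds, by iteratively killing maps from shifts of objects of $\mathcal{R}$, a triangle $X_{\T''} \to X \to X_{\T'} \to X_{\T''}[1]$ with $X_{\T''} \in \T''$ and $X_{\T'} \in \T'$. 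This assignment yields $j^{*}$ as the right adjoint of $j_{!}$ and $i^{*}$ as the left adjoint of $i_{*}$, together with the second triangle of the recollement.

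Next, I would construct $i^{!}$ and $j_{*}$ via Brown representability in $\T$ applied to the coproduct-preserving functors $i_{*}$ and $j^{*}$. Full faithfulness of $i_{*}, j_{!}, j_{*}$ and the vanishing $j^{*} i_{*} = 0$ are then formal consequences of the construction, and the two recollement triangles are precisely the Bousfield and co-Bousfield sequences. For the compact-object half of the statement, I would first note that compact objects of $\T$ that happen to lie in $\T''$ remain compact there (standard, because $\T''$ is coproduct-closed); combined with the Keller--Neeman theorem preceding the statement, this gives $\T''^{\rm c} = \thick(\mathcal{R})$, so $\T''$ is itself compactly generated and $\overline{j_{!}}$ is the fully faithful inclusion. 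The identity $\overline{i^{*}} \circ \overline{j_{!}} = 0$ is immediate from $i^{*} j_{!} = 0$.

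To obtain the triangle equivalence $\T^{\rm c}/\T''^{\rm c} \to \T'^{\rm c}$ up to summands, I would use Lemma~\ref{Lem:N2} to show that $i^{*}$ preserves compactness---equivalent to $i^{!}$ preserving coproducts, which follows from the first Bousfield triangle once one knows $j_{*}$ preserves coproducts; the latter is detected by checking, via a direct inspection of the Bousfield triangle for a compact $X$, that $j^{*}$ carries $\T^{\rm c}$ into $\T''^{\rm c}$. The universal property of the Verdier quotient then delivers a triangle functor $\overline{i^{*}}\colon\T^{\rm c}/\T''^{\rm c} \to \T'^{\rm c}$, and fullness and faithfulness follow from a diagram chase on the Bousfield triangles together with the vanishing $\Hom_{\T}(Y,X) = 0$ for $Y \in \T''$, $X \in \T'$. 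The main obstacle is the final step, essential surjectivity up to summands: given $X \in \T'^{\rm c}$, a Thomason--Trobaugh telescope argument realises $X$ as a direct summand of $i^{*} C$ for some $C \in \T^{\rm c}$, but $X$ itself need not be isomorphic to such an $i^{*} C$ because the Verdier quotient $\T^{\rm c}/\T''^{\rm c}$ is generally not idempotent complete. The obstruction is detected by $K_{0}(\T''^{\rm c})$ through the induced localisation sequence on Grothendieck groups, which is precisely why the statement is asserted only \emph{up to direct summands} rather than as a strict equivalence.
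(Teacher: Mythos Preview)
The paper does not give its own proof of this statement; it is quoted directly from \cite[Theorem 2.1]{Neeman92} and used as a black box. Your proposal is essentially Neeman's argument and is sound in its broad outline: Bousfield localisation at the compact set $\mathcal{R}$, Brown representability for the remaining adjoints, and then the Thomason--Neeman analysis of compact objects.

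There is, however, a slip in your application of Lemma~\ref{Lem:N2}. The right adjoint of $i^{*}$ is $i_{*}$, not $i^{!}$ (the adjoint triple is $(i^{*}, i_{*}, i^{!})$). Hence $i^{*}$ preserves compact objects if and only if $i_{*}$ preserves coproducts, and this is immediate: $i_{*}$ is the inclusion of $\T' = \T''^{\perp}$, which you already observed is closed under coproducts in $\T$. Your detour through ``$j_{*}$ preserves coproducts'' and ``$j^{*}$ carries $\T^{\rm c}$ into $\T''^{\rm c}$'' is therefore unnecessary, and the second of these claims is not something you can read off the Bousfield triangle for a compact $X$ without already knowing that $i^{*}$ preserves compacts, so the reasoning as written is circular. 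With the corrected (and simpler) argument for $i^{*}$, the rest of your sketch---including the essential surjectivity up to summands via the telescope construction and the $K_{0}$ obstruction---goes through as you describe.
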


We give the following useful observation.
\begin{Prop}\label{Prop:D^{b}}
Let $A, B, C$ be three  homologically smooth  and proper dg $k$-algebras. Assume there is a recollement
\[  \xymatrixcolsep{4pc}\xymatrix{\D(B) \ar[r]^{i_*} &\D(A) \ar@/_1pc/[l]_{i^*} \ar@/^1pc/[l]^{i^!} \ar[r]^{j^*}  &\D(C). \ar@/^-1pc/[l]_{j_!} \ar@/^1pc/[l]^{j_{*}}
  } \]
Then there is  an induced recollement
\[  \xymatrixcolsep{4pc}\xymatrix{\D^{\bb}(B) \ar[r]^{i_*} &\D^{\bb}(A) \ar@/_1pc/[l]_{i^*} \ar@/^1pc/[l]^{i^!} \ar[r]^{j^*}  &\D^{\bb}(C).\ar@/^-1pc/[l]_{j_!} \ar@/^1pc/[l]^{j_{*}}
  } \]
\end{Prop}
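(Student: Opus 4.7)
\medskip

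\noindent\textbf{Proof plan.} The plan is to show that each of the six functors in the recollement restricts to a functor between the corresponding bounded subcategories, after which the recollement axioms in Definition \ref{recollement} will restrict essentially for free. The key observation is that, by Lemma \ref{Lem:Db=per}, the hypothesis that $A$, $B$, $C$ are homologically smooth and proper gives $\D^{\bb}(A)=\per(A)$, $\D^{\bb}(B)=\per(B)$, $\D^{\bb}(C)=\per(C)$, so preserving $\D^{\bb}$ and preserving $\per$ are the same condition for each of the three categories.

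First I would handle the two outermost functors. Since $\D(A)$, $\D(B)$, $\D(C)$ are compactly generated by the algebras themselves, Lemma \ref{Lem:N2} applies. The functor $i^{*}$ has right adjoint $i_{*}$, which itself admits a further right adjoint $i^{!}$; hence $i_{*}$ preserves coproducts, so by Lemma \ref{Lem:N2} the functor $i^{*}$ preserves compact objects, i.e.\ sends $\per(A)$ into $\per(B)$. By the same argument applied to the adjoint triple $(j_{!},j^{*},j_{*})$, the functor $j_{!}$ preserves compacts, so it sends $\per(C)$ into $\per(A)$. In particular $i^{*}(\D^{\bb}(A))\subset \D^{\bb}(B)$ and $j_{!}(\D^{\bb}(C))\subset \D^{\bb}(A)$.

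Next I would propagate this through the adjunctions using Lemma \ref{Lem:pertoDb}. Applied to the pair $(i^{*},i_{*})$, since $i^{*}$ preserves $\per$, the right adjoint $i_{*}$ sends $\D^{\bb}(B)$ into $\D^{\bb}(A)$; applied to $(j_{!},j^{*})$, since $j_{!}$ preserves $\per$, the right adjoint $j^{*}$ sends $\D^{\bb}(A)$ into $\D^{\bb}(C)$. Now using again that $\D^{\bb}=\per$ in each of our three categories, the functors $i_{*}$ and $j^{*}$ likewise preserve perfects. Then a second application of Lemma \ref{Lem:pertoDb} to $(i_{*},i^{!})$ and $(j^{*},j_{*})$ yields $i^{!}(\D^{\bb}(A))\subset \D^{\bb}(B)$ and $j_{*}(\D^{\bb}(C))\subset \D^{\bb}(A)$.

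Finally I would assemble the restricted diagram and check the four axioms of Definition \ref{recollement}. The adjunctions in (1) automatically restrict once both functors in each adjoint pair land in the relevant subcategories; full faithfulness in (2) for $i_{*}$, $j_{!}$, $j_{*}$ is inherited from the ambient recollement; the vanishing $j^{*}\circ i_{*}=0$ in (3) is unchanged; and for (4), given $X\in\D^{\bb}(A)$, both defining triangles $i_{*}i^{!}X\to X\to j_{*}j^{*}X\to$ and $j_{!}j^{*}X\to X\to i_{*}i^{*}X\to$ consist of objects all lying in $\D^{\bb}(A)$ (because each of the outer vertices is in $\D^{\bb}(A)$ by the step above, and $\D^{\bb}(A)$ is a triangulated subcategory). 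I expect the only genuinely non-formal content is the two applications of the adjoint/compactness argument for $i^{*}$ and $j_{!}$ combined with the iterated use of Lemma \ref{Lem:pertoDb}; once the six restriction statements are in hand, the verification of the axioms is routine.
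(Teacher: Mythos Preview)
Your proposal is correct and follows essentially the same approach as the paper: show that $i^{*}$ and $j_{!}$ preserve compacts, then use Lemma \ref{Lem:pertoDb} twice (combined with $\per=\D^{\bb}$) to propagate this to $i_{*},j^{*}$ and then to $i^{!},j_{*}$. The only cosmetic difference is that for the first step the paper cites Theorem \ref{Thm:Ne}, whereas you invoke Lemma \ref{Lem:N2} directly via the existence of the further right adjoints $i^{!}$ and $j_{*}$; both routes yield the same conclusion, and your explicit verification that the recollement axioms restrict is a harmless elaboration of what the paper leaves implicit.
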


\begin{proof}
Note that by Theorem \ref{Thm:Ne}, the top functors $i^{*}$ and $j_{!}$ restrict to the perfect categories, hence to the bounded derived categories (since they coincide by our assumption and Lemma \ref{Lem:Db=per}).
Also note that by Lemma \ref{Lem:pertoDb}, the middle functors $i_{*}$ and $j^{*}$ restrict to $\D^{\bb}$,
hence to $\per$. Then again by Lemma \ref{Lem:pertoDb}, we know that the bottom functors $i^{!}$ and $j_{*}$ also restrict to $\D^{\bb}$. So the assertion is true.
\end{proof}

\subsection{Graded marked surfaces and graded gentle algebras}\label{subsection:marked surfaces}
In this subsection we briefly recall some results on marked surfaces. We refer to \cite{HKK17,OPS18,APS19,O19, LP20} for more details and examples.
\begin{Def}\label{definition: marked surface}
A pair~$(S,M)$ is called a \emph{marked surface} if
  \begin{enumerate}[\rm(1)]
 \item $S$ is an oriented surface with boundary with
  connected components $\partial S=\sqcup_{i=1}^{b}\partial_i S$;
 \item $M = M^{\gpoint} \cup M^{\rpoint} \cup P^{\gpoint}\cup P^{\rpoint}$ is a finite set of \emph{marked points} on $S$.
       The elements of~$M^{\gpoint}$ and~$M^{\rpoint}$ are marked points on $\partial S$, which will be respectively represented by symbols~$\gpoint$ and~$\rpoint$. Each connected component $\partial_i S$ is required to contain at least one marked point of each colour, where in general the points~$\gpoint$ and~$\rpoint$ are alternating on $\partial_i S$. The elements in $P=P^{\gpoint}\cup P^{\rpoint}$ are marked points in the interior of $S$. We refer to these points as \emph{punctures}, and we will also represent them by the symbols $\gpoint$ and $\rpoint$ respectively.
  \end{enumerate}
\end{Def}

Note that a marked surface is allowed to have empty boundary. If this happens, then we need both $P^{\gpoint}$ and $P^{\rpoint}$ to be non-empty.  We adopt the convention that the following two special marked surfaces, the one corresponding to  a disk without punctures which has only one $\gpoint$-point and one $\rpoint$-point on the boundary, and the second corresponding to  a sphere with only one $\gpoint$-puncture and one $\rpoint$-puncture are   `zero' and if we  cut the surface and obtain such a component, then we can disregard it.

\begin{Def}\label{definition:arcs}
Let $(S,M)$ be a marked surface.
An \emph{$\gpoint$-arc} is a non-contractible curve, with endpoints in~$M^{\gpoint}\cup P^{\gpoint}$, while an \emph{$\rpoint$-arc} is a non-contractible curve, with endpoints in $M^{\rpoint}\cup P^{\rpoint}$.
\end{Def}

On the surface, all curves are considered up to homotopy, and all  intersections of curves are required to be transversal.

\begin{Def}\label{definition: dissection}
Let $(S,M)$ be a marked surface.
\begin{enumerate}[\rm(1)]
 \item An \emph{admissible $\gpoint$-dissection} $\zD$ of $ (S,M)$ is a collection of pairwise non-intersecting (in the interior of $S$) and pairwise distinct $\gpoint$-arcs  on the surface such that the arcs in $\zD$ cut the surface into polygons each of which contains exactly one $\rpoint$-point.
 \item Let $q$ be a common endpoint of two arcs $\ell_i$ and $\ell_j$ in an admissible $\gpoint$-dissection $\zD$, an \emph{oriented intersection} from $\ell_i$ to $\ell_j$ at $q$ is an anticlockwise angle locally from $\ell_i$ to $\ell_j$ based at $q$ such that the angle is in the interior of the surface. We call an oriented intersection a \emph{minimal oriented intersection} if it is not a composition of two oriented intersections of arcs from $\zD$.
 \item A \emph{graded admissible $\gpoint$-dissection} is a pair $(\zD,G)$, where $\zD$ is an admissible $\gpoint$-dissection and $G$ is a set of gradings, that is, for any minimal oriented intersection $\za$, a \emph{grading} is the data of an integer $g(\za)\in \mathbb{Z}$.
 \item We call $(S,M,\zD, G)$ a \emph{graded marked surface}.
  \end{enumerate}
We similarly define \emph{admissible $\rpoint$-dissections} and \emph{graded admissible $\rpoint$-dissections}. \end{Def}

\begin{Rem}
If there is no $\gpoint$-puncture in $(S,M)$, then for any graded admissible dissection $(\zD,G)$, there exists a unique foliation (up to homotopy) on $S$ (with a smooth structure) which induces the grading $G$, see \cite{LP20}.
\end{Rem}

We introduce the following proposition-definition, whose proof is straightforward.

\begin{Prop-Def}\label{definition: dual graded dissection}
Let $(S,M,\zD,G)$ be a graded marked surface.
\begin{enumerate}[\rm(1)]
 \item
There is a unique (up to homotopy) admissible $\rpoint$-dissection $\zD^*$ on $(S,M)$ such that
 each arc $\ell^*$ in $\zD^*$ intersects exactly one arc $\ell$ of $\zD$, which we will call the \emph{dual $\rpoint$-dissection}
 of $\zD$.
  \item
  For any minimal oriented intersection $\za$ of $\zD$ from $\ell_i$ to $\ell_j$, there is a unique minimal oriented intersection $\za^{op}$ of $\zD^*$ from $\ell_j^*$ to $\ell_i^*$, see Figure \ref{figure:graded dual algebra}. Conversely, any minimal oriented intersection of $\zD^*$ arises from this way. We call $\za^{op}$ the \emph{dual minimal oriented intersection} of $\za$.

\item
The  \emph{ dual graded  $\rpoint$-dissection} $(\zD^*,G^*)$   of $(\zD,G)$ is the dual $\rpoint$-dissection $\zD^*$ of $\zD$ with dual of grading $G^*$ defined by setting $g(\za^{op}) = 1-g(\za)$, for any minimal oriented intersection $\za$ and its dual $\za^{op}$.

\end{enumerate}
\noindent We similarly define the dual graded $\gpoint$-dissection of an admissible graded $\rpoint$-dissection.
\end{Prop-Def}

Note that the dual graded dissection is unique (up to homotopy) and it is an involution, that is $(\zD^{**},G^{**})=(\zD,G)$.
Moreover, the definition of the dual grading is motivated by the algebraic dual of a graded gentle algebra considered in subsection~\ref{Section:Koszul}.

\begin{Ex}\label{ex:dissection}
See Figure \ref{figure:ex-dissection} for an example of an admissible $\gpoint$-dissection of a marked surface and its dual dissection and Figure~\ref{figure:graded dual algebra}  for the dual of minimal oriented intersections.
\begin{figure}
\begin{center}
{\begin{tikzpicture}[scale=0.3]
\tikzset{vertex/.style = {style=circle,draw,fill,red,minimum size = 2pt,inner        sep=1pt}}

\draw[thick] (0,0) circle [radius=6];
\draw[thick] (1,-.5) circle [radius=.6];

\draw[dark-green,thick](5.46,-2.46)--(-5,-3.3);
\draw[dark-green,thick](-5,3.3)--(-5,-3.3);
\draw[dark-green,thick](-5,-3.3)--(1,3);
\draw[dark-green,thick](1,3)--(5.46,-2.46);
\draw[dark-green,thick](1,3)--(-5,3.3);
\draw[dark-green,thick](1.4,-.9)--(5.46,-2.46);

\draw (-6,0) node {$\rpoint$};
\draw (-3,1) node {$\rpoint$};
\draw (0,-6) node {$\rpoint$};
\draw (.5,-.1) node {$\rpoint$};
\draw (4.25,4.25) node {$\rpoint$};

\draw[red,thick](-6,0)--(-3,1);
\draw[red,thick](.7,-.1)--(-3,1);
\draw[red,thick](.5,0)--(4.25,4.25);
\draw[bend right,red,thick](4.25,4.25)to(-3,1);
\draw[bend right,red,thick](.4,-.1)to(0,-6);
    \draw[red,thick] plot [smooth,tension=1] coordinates {(.5,0) (.5,-1.5) (3,-1.5) (2,0.5) (.5,0)};

\draw[dark-green] (0,-3.5) node {\tiny$\ell_1$};
\draw[dark-green] (-1,-.2) node {\tiny$\ell_2$};
\draw[dark-green] (-4.5,0) node {\tiny$\ell_3$};
\draw[dark-green] (-2,3.6) node {\tiny$\ell_4$};
\draw[dark-green] (3.3,1) node {\tiny$\ell_5$};
\draw[dark-green] (2.5,-.8) node {\tiny$\ell_6$};

\draw (1,3) node {$\gpoint$};
\draw (5.46,-2.46) node {$\gpoint$};
\draw (1.4,-.9) node {$\gpoint$};
\draw (-5,-3.3) node {$\gpoint$};
\draw (-5,3.3) node {$\gpoint$};
\end{tikzpicture}}
\end{center}
\begin{center}
\caption{An example of an admissible $\gpoint$-dissection $\zD$ of a marked surface and its dual $\zD^*$, where the arcs in $\zD$ are colored by green and the arcs in $\zD^*$ are colored by red.}\label{figure:ex-dissection}
\end{center}
\end{figure}

\begin{figure}
 \[\scalebox{1}{
\begin{tikzpicture}[>=stealth,scale=0.6]

\draw[red,thick,fill=red] (2,4) circle (0.1);

\draw[thick,fill=white] (2,0) circle (0.1);
\draw [thick,dark-green] (-1.3,3.8)--(2,0);
\draw [thick,dark-green](5.3,3.8)--(2,0);
\draw [thick,red] (-1.3,.2)--(2,4);
\draw [thick,red](5.3,.2)--(2,4);

\draw [thick,bend right,->] (2.3,.35)to(1.7,.35);
\node at (2,.8) {\tiny$\za$};
\draw [thick,bend left,<-] (2.3,3.65)to(1.7,3.65);
\node at (2,3.2) {\tiny$\za^{op}$};

\node at (1,1.7) {\tiny$\ell_j$};
\node at (3,1.7) {\tiny$\ell_i$};
\node at (1,2.3) {\tiny$\ell^*_j$};
\node at (3,2.3) {\tiny$\ell^*_i$};
\node at (2,-.4) {\tiny$q$};
\node at (2,4.4) {\tiny$q^*$};
\end{tikzpicture}
}\]
\begin{center}
\caption{Dual of minimal oriented intersections $\za$ and $\za^{op}$. We set $g(\za^{op}) = 1-g(\za)$ for the gradings.}\label{figure:graded dual algebra}
\end{center}
\end{figure}
\end{Ex}

For ease of notation, we will often omit the grading $G$ of $(\zD, G)$ and simply write $\zD$ for a graded surface dissection.

We now recall how to construct a graded gentle algebra from an admissible graded dissection of a marked surface.
We begin by recalling the definition of a graded gentle algebra.

\begin{Def}\label{definition:gentle algebras}
A \emph{graded gentle algebra} $A$ is an algebra of the form  $A = K Q/I$  where $Q=(Q_0,Q_1)$ is a finite quiver and $I$ an ideal of $KQ$ satisfying the following conditions:
\begin{enumerate}[\rm(1)]
 \item Each vertex in $Q_0$ is the source of at most two arrows and the target of at most two arrows.

 \item For each arrow $\za$ in $Q_1$, there is at most one arrow $\zb$ such that  $0 \neq \za\zb\in I$; at most one arrow $\zg$ such that  $0 \neq \zg\za\in I$; at most one arrow $\zb'$ such that $\za\zb'\notin I$; at most one arrow $\zg'$ such that $\zg'\za\notin I$.

 \item $I$ is generated by paths of length two.

 \item The grading of $A$ is induced by associating to  each arrow $\alpha$  an integer $| \alpha |$.

\end{enumerate}
\end{Def}
Note that $A$ might be infinite dimensional. Sometimes such an algebra is also referred to as a  \emph{graded locally gentle algebra}.

We now recall how to associate a graded gentle algebra to a graded admissible dissection. For that,
let $(\zD, G)$ be a graded admissible $\gpoint$-dissection on a marked surface $(S,M)$. Define the graded algebra $A(\zD)=kQ(\zD)/I(\zD)$ by setting:

\begin{enumerate}[\rm(1)]
  \item The vertices of $Q(\zD)$ are given by the arcs in $\zD$.
  \item Each minimal oriented intersection $\za$ from $\ell_i$ to $\ell_j$ gives rise to an arrow from $\ell_i$ to $\ell_j$, which is still denoted by $\za$. Furthermore, $|\alpha| = g(\za)$.
  \item  The ideal $I(\zD)$ is generated by the following relations: whenever $\ell_i$ and $\ell_j$ intersect at a marked point which gives rise to an arrow $\za:\ell_i\rightarrow\ell_j$, and the other end of $\ell_j$ intersects $\ell_k$ at a marked point which gives rise to an arrow $\zb:\ell_j\rightarrow\ell_k$, then the composition $\za\zb$ is a relation.
\end{enumerate}

Then $A(\zD)$ is a graded gentle algebra. In particular this establishes a bijection between the set of homeomorphism classes of marked surfaces with graded admissible $\gpoint$-dissections and the set of isomorphism classes of graded gentle algebras \cite{OPS18,PPP18}. Similarly, we construct a graded gentle algebra $A(\zD^*)=kQ(\zD^*)/I(\zD^*)$ from a graded admissible $\rpoint$-dissection $\zD^*$.

\begin{Def}\label{Def:exceptionaldissection}
An $\gpoint$-arc is called an \emph{exceptional $\gpoint$-arc} if $\zg$ has no self-intersections, that is, $\zg$ does not have any self-intersections neither in the interior of $S$ nor at its endpoints.
A $\gpoint$-dissection $\zD$ is called \emph{an $\gpoint$-exceptional dissection} if there is no oriented cycles in the quiver $Q(\zD)$ associated to $\zD$. In particular, there is no loop in $Q(\zD)$ and each arc in $\zD$ is an exceptional $\gpoint$-arc.
\end{Def}

\begin{Ex}\label{ex:algebra}
See Figure \ref{figure:ex-algebra} for the gentle algebra $A(\zD)$ associated  to  the admissible $\gpoint$-dissection $\zD$ depicted in Figure \ref{figure:ex-dissection}. Note that here $\zD$ is not an exceptional dissection since $Q(\zD)$ has an oriented cycle.
\begin{figure}
\begin{center}
{\begin{tikzpicture}[scale=0.7]
\def \radius {4cm}
    \draw [thick,<-] (-1,0) -- (1,0);
    \draw (-1.2,0) node {4};
    \draw (1.2,0) node {3};

    \draw [thick,<-] (-1-1.1,-1.5) -- (1-1.1,-1.5);
    \draw (-1.2-1.1,-1.5) node {5};
    \draw (1.2-1.1,-1.5) node {2};
    \draw (.1,-3) node {6};
    \draw [thick,<-] (-1+1.4,-1.5) -- (1+1.4,-1.5);
    \draw (1.2+1.4,-1.5) node {1};
    \draw [thick,<-] (1.1,-.2) -- (.2,-1.3);
    \draw [thick,->] (-1,-.2) -- (-.1,-1.3);
    \draw [thick,<-] (1.1-2.4,-.2) -- (.2-2.4,-1.3);

    \draw [thick,<-] (1.2+1.4-.2,-1.5-.2) -- (.1+.2,-3+.2);
    \draw [thick,<-] (.1-.2,-3+.2) -- (-1.2-1.1+.2,-1.5-.2);

    \draw[dotted,thick](-.2,-1.05) to [out=90,in=90] (.2,-1.05);
    \draw[dotted,thick](.7,-.1) to [out=-90,in=-90](1,-.3);
    \draw[dotted,thick](-.8,-.3) to [out=-90,in=-90](-.7,0);

    \draw[dotted,thick](-.3,-1.7) to [out=-90,in=-90] (.4,-1.7);
    \draw[dotted,thick](1.9,-1.55) to [out=-90,in=-90](2.1,-1.8);
    \draw[dotted,thick](-1.6,-1.55) to [out=-90,in=-90](-1.8,-1.8);

\end{tikzpicture}}
\end{center}
\begin{center}
\caption{The gentle algebra associated to the admissible dissection in Figure \ref{figure:ex-dissection}, where the relations are depicted by dotted lines.}\label{figure:ex-algebra}
\end{center}
\end{figure}
\end{Ex}

\section{Recollements arising from graded quadratic monomial algebras}



\subsection{Graded quadratic monomial algebras} \label{Section:monomial}
Let $Q$ be a graded quiver, that is, to every arrow in $Q$ there is  an associated  integer $|\za|$. Furthermore we assume that  $Q_{0}$ a finite set.
Let $I$ be a set of quadratic monomial relations and  let $A=kQ/\langle I \rangle$. Then $A$ is a graded $k$-algebra and we may regard $A$ as a dg $k$-algebra with zero differential.
Let $J$ be a subset of $I$. In this subsection, we construct a new graded quadratic monomial algebra $A_{J}=KQ'/\langle I'\rangle$ such that $A_{J}$ is quasi-isomorphic to $A$ by `killing' the relations in $J$. Note that if $J=I$, then $I'=0$ and  $A_J$ is the dg quiver algebra associated to $A$ sometimes referred to as the cofibrant dg algebra resolution of $A$.

 Define the  following  sets of paths in $Q$.
\[J_{1}:=Q_{1}, \   J_{2}:=J,  \    J_{3}:=\{\alpha_{1}\alpha_{2}\za_{3} \mid \alpha_{1}\za_{2}\in  J, \za_{2}\za_{3}\in J\},\]
 and, in general, we define
\[ J_{n}:=\{\za_{1}\cdots\za_{n}\mid \za_{i}\za_{i+1}\in J, \forall i=1,2,\dots,n-1\}.\]
Note that $J_{n}$ can be an empty set.
\begin{Def}\label{Def:AJ}
We define the \emph{partial cofibrant dg algebra resolution}
 $A_{J}=kQ'/\langle I'\rangle$ as follows.
\begin{enumerate}[\rm(1)]
\item $Q'_{0}=Q_{0}$.
\item The arrows of $Q'$ are given by
\[Q'_{1}=\{[\alpha_{1}\dots\za_{n}]: s(\za_{1})\ra t(\za_{n}) \mid \za_{1}\dots\za_{n}\in J_{n},  n\ge 1 \},\]
where  $|[\za_{1}\dots\za_{n}]|:=\sum_{i=1}^{n} |\alpha_{i}|-n +1$.
\item The quadratic relations are given by
\[I':=   \{[\za_{1}\dots\za_{n}][\zb_{1}\dots\zb_{m}] \mid \za_{n}\zb_{1}\in I\backslash J, n,m\ge 1 \}.   \]
\item The differential $d'$ is defined as $d'([\alpha])=0$ for any $\alpha\in Q_{1}$ and
\[d'([\za_{1}\dots\za_{n}]):=\sum_{i=1}^{n-1}(-1)^{|[\za_{1}\dots\za_{i}]|}[\za_{1}\dots\za_{i}][\za_{i+1}\dots\za_{n}],\]
for any  $\za_{1}\dots\za_{n} \in J_{n}$ and $n\ge 2$.
\end{enumerate}
\end{Def}
One can check that $d'^{2}=0$, so $A_{J}$ is a well-defined dg $k$-algebra.
\begin{Rem}\label{Rem:dgquiver}
\begin{enumerate}[\rm (1)]
\item If $J=I$, then $I'=\emptyset$ and in this case, $A_{J}=(kQ', d')$ is a dg quiver algebra, which can be regarded as a cofibrant dg algebra resolution of $A$.
Moreover, any algebra admits a dg algebra resolution (see for example \cite{O}) and for a monomial algebra, there is an explicit minimal choice given in  \cite{T},  which coincides with our construction above for the case $J=I$.
\item
If $J=\emptyset$, then $J_{n}=\emptyset$ for any $n\ge 2$ and moreover, $A_{J}=A$.
\end{enumerate}
\end{Rem}

We illustrate the construction above in an example.

\begin{Ex}
Let $Q: 1\xra{\za} 2 \xra{\zb} 3 \xra{\zd} 4 \xra{\ze} 5$ be the graded  quiver with zero grading and where $I=\{\za\zb, \zb\zd, \zd \ze \}$. Let $J=\{\za\zb, \zb\zd \}$. Then $Q'$ is

\[\xymatrix{ 1 \ar[r]^{[\za]} \ar@/^1.5pc/[rr]^{[\za\zb]} \ar@/^-3pc/[rrr]_{[\za\zb\zd]}& 2 \ar[r]^{[\zb]} \ar@/^-1.5pc/[rr]^{[\zb\zd]}& 3 \ar[r]^{[\zd]}  & 4 \ar[r]^{[\ze]} & 5 }   \]
where  the grading is given by $|[\za\zb]|=-1=|[\zb\zd]|$, $|[\za\zb\zd]|=-2$ and the differential is given by $d'([\za\zb])=[\za][\zb]$, $d'([\zb\zd])=[\zb][\zd]$ and $d'([\za\zb\zd])=[\za][\zb\zd]-[\za\zb][\zd]$ and where $I' = \{ [\zd] [\ze], [\zb\zd][\ze],  [\za\zb\zd][\ze] \}$.
\end{Ex}

Note that in some cases $A_J$ might be a quiver with infinitely many arrows. We give an example of such a case.

\begin{Ex}
Let $Q$ be the graded quiver with zero grading $\xymatrix{1\ar@(ur,dr)^{\za}}$ and let $I=\{\za^{2}\}$. Let $J=I$, then $Q'$ is

\[\xymatrix{ 1 \ar@(ur,dr)^{[\za]} \ar@(dr,dl)^{[\za^{2}]} \ar@(ul,ur)^{[\za^{n}]} \ar@(dl,ul)_{...}
}\]
where $|[\za^{n}]|=1-n$ and $d'(|\za^{n}|)=\sum_{i=1}^{n-1}(-1)^{1-i}[\za^{i}][\za^{n-i}]$.
\end{Ex}

Our main result in this subsection is the following.

\begin{Thm}\label{Thm:AJA}
There is a natural quasi-isomorphism $\phi:A_{J}\ra A$ of dg $k$-algebras.
\end{Thm}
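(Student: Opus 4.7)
The plan is to define the natural dg algebra map $\phi\colon A_{J}\to A$ sending a generator $[\alpha]$ to $\alpha$ (and longer brackets to zero), and then prove it is a quasi-isomorphism by decomposing $A_{J}$ as a complex along the underlying paths of $kQ$ and recognising each summand as either acyclic or as a single basis element of $A$.

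First I would set $\phi([\alpha])=\alpha$ for $\alpha\in Q_{1}$ and $\phi([\alpha_{1}\cdots\alpha_{n}])=0$ for $n\geq 2$, and extend multiplicatively. Well-definedness on $A_{J}$ requires that each defining relation in $I'$, say $[\alpha_{1}\cdots\alpha_{n}][\beta_{1}\cdots\beta_{m}]$ with $\alpha_{n}\beta_{1}\in I\setminus J$, is sent into $\langle I\rangle$; this is immediate since the image is $\alpha_{n}\beta_{1}\in I$ when $n=m=1$ and zero otherwise. Compatibility $\phi\circ d'=0$ holds because every summand of $d'([\alpha_{1}\cdots\alpha_{n}])$ is either a product containing a bracket of length $\geq 2$ (killed by $\phi$) or, when $n=2$, equals $\pm[\alpha_{1}][\alpha_{2}]$ with $\alpha_{1}\alpha_{2}\in J\subseteq I$, whose image vanishes in $A$.

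The heart of the argument decomposes $A_{J}$ along paths of the ambient free path algebra $kQ$. Since $I'$ is generated by quadratic monomials in the alphabet $Q'_{1}$, the algebra $A_{J}$ admits a $k$-basis of \emph{reduced products} $[\alpha_{1}\cdots\alpha_{i_{1}}][\alpha_{i_{1}+1}\cdots\alpha_{i_{2}}]\cdots[\alpha_{i_{r-1}+1}\cdots\alpha_{k}]$ in which every bracket is a generator of $Q'_{1}$ (so all internal adjacencies lie in $J$) and every break $i_{j}$ satisfies $\alpha_{i_{j}}\alpha_{i_{j}+1}\notin I\setminus J$. For each path $p=\alpha_{1}\cdots\alpha_{k}$ in $kQ$ let $V_{p}\subseteq A_{J}$ be the linear span of the reduced products whose concatenation is $p$; since $d'$ inserts a new break but never changes the underlying sequence of arrows, each $V_{p}$ is a subcomplex and $A_{J}=\bigoplus_{p}V_{p}$ as complexes. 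Setting $B_{p}=\{i\mid\alpha_{i}\alpha_{i+1}\notin I\}$ and $S_{p}=\{i\mid\alpha_{i}\alpha_{i+1}\in J\}$ (disjoint subsets of $\{1,\dots,k-1\}$), the nonzero basis vectors of $V_{p}$ correspond bijectively to break sets $T$ with $B_{p}\subseteq T\subseteq B_{p}\cup S_{p}$; in particular $V_{p}=0$ whenever $p$ contains a subword in $I\setminus J$.

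In the remaining case a degree count (each additional break raises the degree by one) identifies $V_{p}$ with the tensor product $\bigotimes_{s\in S_{p}}(k\xrightarrow{\sim}k)$ of $|S_{p}|$ acyclic two-term complexes, one for each optional break. Hence $V_{p}$ is acyclic when $S_{p}\neq\emptyset$, and when $S_{p}=\emptyset$ (i.e.\ $p$ avoids every subword in $I$) it is one-dimensional, spanned by $[\alpha_{1}]\cdots[\alpha_{k}]$ in degree $\sum|\alpha_{i}|$. The surviving paths form a basis of $A$, and $\phi$ sends $[\alpha_{1}]\cdots[\alpha_{k}]\mapsto\alpha_{1}\cdots\alpha_{k}$, yielding the isomorphism on cohomology. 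The main technical step is the identification of $V_{p}$ with this tensor product, which requires keeping careful track of the Koszul signs in $d'([\alpha_{1}\cdots\alpha_{n}])$; this can be handled either by checking directly that the $|S_{p}|$ ``split-at-position-$s$'' maps pairwise anticommute on basis vectors, or by exhibiting a contracting homotopy merging the brackets around the smallest optional break in $S_{p}\setminus T$.
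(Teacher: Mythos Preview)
Your argument is correct and takes a genuinely different route from the paper's proof. The paper constructs $\phi$ exactly as you do, then shows that $P=\ker\phi$ is contractible by writing down an explicit homotopy $\psi\colon P\to P[-1]$: on a basis path with $s+1$ brackets it averages over the positions where two adjacent brackets can be merged, dividing by a combinatorial count $m$. This forces the authors to assume $\operatorname{char}k=0$ (they note this in the remark immediately after the proof). Your path-by-path decomposition $A_{J}=\bigoplus_{p}V_{p}$ avoids any division: each nonzero $V_{p}$ is an augmented cube complex indexed by the subsets of $S_{p}$, and the anticommutation of the ``insert a break at position $s$'' maps (which you correctly flag as the point needing care) makes it isomorphic to a tensor product of two-term acyclic complexes, hence acyclic over any base ring. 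So your approach is not only valid but strictly more general than the paper's, removing the characteristic-zero hypothesis. The trade-off is that the paper's single global homotopy is shorter to state once one is willing to accept the $1/m$ factor, whereas your argument requires setting up the decomposition and the break-set combinatorics; but conceptually yours explains more clearly \emph{why} the result holds, namely that the ``extra'' generators introduced for each $J$-relation assemble into independent acyclic factors.
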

\begin{proof}
We first define a map $\phi: Q'_{0}\cup Q_{1}'\ra kQ$ by $\phi(e_{j}):=e_{j}$, $\phi([\za]):=\za$ for any $\za \in Q_{1}$,
$\phi([\za_{1}\cdots\za_{n}]):=0$ for $n\ge 2$.
Then $\phi$ can be extended to an epimorphism $\phi: kQ' \ra kQ$ of graded $k$-algebras.
Note that we have $\phi(I')\subset I$ and $\phi( d'([\za_{1}\cdots\za_{n}]))\in I$, for any $n\ge 1$. Thus $\phi$
 also induces an epimorphism $\phi: A_{J}\ra A$ of dg algebras.

 Next we show that $\phi$  is a quasi-isomorphism of $k$-complexes. Let $P:=\ker \phi$. It is enough to show $P$ is acyclic. Define a map $N: Q_{1}\times Q_{1} \ra \{0,1\}$ by
 \[N(\za, \zb):=\begin{cases} 1 & \mbox{ if } \za\zb \in J;\\
 0 & \mbox{ if } \za\zb\not\in  J.\end{cases} \]

 We construct a morphism $\psi: P \ra P[-1]$ of graded $k$-modules  by
 \begin{align*} & \psi([\za_{1}\cdots\za_{t_{1}}][\za_{t_{1}+1}\cdots\za_{t_{2}}]\cdots[\za_{t_{s}+1}\cdots\za_{n}])
 \\
 :=& \frac{1}{m}\sum_{i=1}^{s}(-1)^{\sum_{j=1}^{i}|[\za_{t_{j-1}+1}\cdots\za_{t_{j}}]|} [\za_{1}\cdots\za_{t_{1}}][\za_{t_{1}+1}\cdots\za_{t_{2}}]\cdots[\za_{t_{i-1}+1}\cdots\za_{t_{i}}\za_{t_{i}+1}\cdots\za_{t_{i+1}}]\cdots[\za_{t_{s}+1}\cdots\za_{n}],
 \end{align*}
where $m=n-s-1+\sum_{i=1}^{s}N(\za_{t_{i}},\za_{t_{i}+1})$ and where we set $t_0 =0$.
If $\za_{t_{i}}\za_{t_{i}+1}\not\in J$, we set above term to be zero.

We claim that $d'\psi+\psi d'=\rm{Id}$. Then $P$ is contractible, and hence acyclic.
First note that any element $l_{1}+\cdots+l_{t}$ of $kQ'$ is in $P$ if and only each $l_{i}$ is in $P$. So we only need to check $d\psi+\psi d'=\rm{Id}$ for paths in $P$.

Given $l=[\za_{1}\cdots\za_{t_{1}}][\za_{t_{1}+1}\cdots\za_{t_{2}}]\cdots[\za_{t_{s}+1}\cdots\za_{n}]\in P$, $l$ appears $(n-s-1)/m$ times in $\psi d'(l)$ and appears $(\sum_{i=1}^{s}N(\za_{t_{i}},\za_{t_{i}+1}))/m$ times in $d'\psi(l)$. For any other term $l'$, if it appears in $\psi d'(l)$, one can check directly that the term $-l'$ appears in $d'\psi(l)$.  So $d'\psi(l)+\psi d'(l)=l$.
\end{proof}

\begin{Rem}
Note that in the proof of Theorem~\ref{Thm:AJA}
we need the assumption that the characteristic of $k$ is zero.
\end{Rem}

 Next we study the graded quadratic dual of $A$.
 Let $I^{\perp}:=\{ \zb^{\rm op}\za^{\rm op}\mid \za\zb \notin I\}$. Let $A^{!}=kQ^{\rm op}/I^{\perp}$ such that
$|\za^{\rm op}|:=1-|\za|$ for any arrow $\za^{\rm op}\in Q^{\rm op}_{1}$.
It is obvious that $A^{!}$ is also a graded quadratic monomial algebra and  $(A^{!})^{!}=A$. Let $S_{i}$ be the simple dg $A$-module corresponding to the vertex $i\in Q_{0}$ and let   $S:=\bop_{i\in Q_{0}}S_{i}$.
The following observation is known and we add a proof for the convenience of the  reader.

\begin{Prop}\label{Prop:Koszuldual}
$A^{!}$ is isomorphic to  the graded algebra $\bop_{t\in \Z}\Hom_{\D^{\rm b}(A)}(S, S[t])$.
\end{Prop}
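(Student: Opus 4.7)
The plan is to construct an explicit isomorphism by identifying both sides with a common basis indexed by the paths $p=\za_1\cdots\za_n$ in $Q$ with $\za_k\za_{k+1}\in I$ for all $k$, i.e., the elements of $J_n$ from Definition~\ref{Def:AJ} with $J=I$.

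First, I would write down the minimal graded projective resolution of each simple right $A$-module $S_i$:
\[
\cdots \ra P^{-n} \ra \cdots \ra P^{-1} \ra P^0=e_iA \ra S_i \ra 0, \qquad P^{-n}=\bop_{p\in J_n,\ s(p)=i} e_{t(p)}A\langle -|p|\rangle,
\]
where $\langle d\rangle$ denotes an internal grading shift by $d$ and the differential sends the generator of the $p$-summand to $\za_1$ times the generator of the $\za_2\cdots\za_n$-summand in $P^{-(n-1)}$. The quadratic monomial condition on $A$ yields directly that this is a minimal resolution: the kernel at each stage is spanned exactly by the admissible extensions of a path by another relation.

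Second, applying $\Hom_A(-,S_j)$ and taking cohomology, the space $\Hom_{\D^{\bb}(A)}(S_i,S_j[t])$ acquires a $k$-basis indexed by paths $p\in J_n$ from $i$ to $j$ with $n-|p|=t$: the cohomological depth $n$ combines with the internal shift $-|p|$, since the internal grading of $A$ is folded into the single grading of $\D^{\bb}(A)$. This matches the degree $\sum_k(1-|\za_k|)=n-|p|$ of $p^{\rm op}=\za_n^{\rm op}\cdots\za_1^{\rm op}$ in $A^{!}$. I would then define
\[
\phi:A^{!}\ra \bop_{t\in\Z}\Hom_{\D^{\bb}(A)}(S,S[t]),\qquad e_i\mapsto\mathrm{id}_{S_i},\qquad \za_n^{\rm op}\cdots\za_1^{\rm op}\mapsto [\za_1\cdots\za_n].
\]
By the definition of $I^{\perp}$, the nonzero monomials in $A^{!}$ are precisely those whose underlying sequence in $Q$ lies in $J_n$, so $\phi$ is a graded linear isomorphism.

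Finally, to show $\phi$ is multiplicative, I would compute the Yoneda product of two such Ext classes. For $p=\za_1\cdots\za_m\in J_m$ from $i$ to $j$ and $q=\zb_1\cdots\zb_n\in J_n$ from $j$ to $k$, splicing the two minimal resolutions above gives $[q]\cdot[p]=[pq]$ when $\za_m\zb_1\in I$ (so $pq\in J_{m+n}$) and $0$ otherwise. On the other side, $\phi(q^{\rm op})\phi(p^{\rm op})=\zb_n^{\rm op}\cdots\zb_1^{\rm op}\za_m^{\rm op}\cdots\za_1^{\rm op}=(pq)^{\rm op}$ in $A^{!}$, nonzero precisely when $\zb_1^{\rm op}\za_m^{\rm op}\notin I^{\perp}$, i.e., under the same condition $\za_m\zb_1\in I$. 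The main obstacle is the careful bookkeeping: correctly folding the internal grading of $A$ into the single grading of $\D^{\bb}(A)$, tracking the order-reversal from $Q$ to $Q^{\rm op}$ so that Yoneda composition matches path concatenation in $Q^{\rm op}$, and verifying the splicing of minimal resolutions. Since all relations are monomial, no signs intervene and $\phi$ becomes the desired algebra isomorphism.
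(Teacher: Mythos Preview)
Your approach is essentially the same as the paper's: both compute $\Hom_{\D^{\rm b}(A)}(S_i,S_j[t])$ via the minimal projective resolution of $S_i$, identify a $k$-basis indexed by paths $\za_1\cdots\za_n$ with all $\za_k\za_{k+1}\in I$ and matching degree, and then define the isomorphism $\phi$ accordingly. The paper obtains the resolution by tensoring the standard minimal $A$-$A$-bimodule resolution of $A$ with $S_i$, while you write it down directly; and the paper verifies multiplicativity only on the generators $\za^{\rm op}$ (checking that the composite $f_\zb\circ f_\za$ is nonzero iff $\za\zb\in I$), whereas you compute the full Yoneda product on arbitrary monomials by splicing. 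This is slightly more work but the same content.

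One concrete slip: your differential is backwards. In the resolution of $S_i$ as a right $A$-module, the summand indexed by $p=\za_1\cdots\za_n$ (with $s(\za_1)=i$) is $e_{t(\za_n)}A$, and the differential sends its generator to (left multiplication by) $\za_n$ applied to the generator of the $\za_1\cdots\za_{n-1}$-summand, not to $\za_1$ times the $\za_2\cdots\za_n$-summand. The latter summand has $s(\za_2)=t(\za_1)$, so it sits in the resolution of $S_{t(\za_1)}$, not of $S_i$; your map as written does not even land in $P^{-(n-1)}$. This is precisely the bookkeeping you flag as the main obstacle; once corrected, your Ext basis description and the Yoneda-product computation go through unchanged.
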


\begin{proof}
To calculate $\Hom_{\D^{\rm b}(A)}(S, S[t])$, we first give a projective resolution of $S$.
Let $M_{0}:=\bop\limits_{i\in Q_{0}}Ae_{i}\ot e_{i}A$ and
\[M_{m}:=\bop\limits_{\za_{1}\cdots\za_{m}\in\cal F_{m}}Ae_{s(\za_{1})}\ot e_{t(\za_{m})}A(|\za_{1}|+\cdots+|\za_{m}|),\]
where $\cal F_{m}:=\{\za_{1}\cdots \za_{m} \mbox{ paths of $kQ$ such that } \za_{i}\za_{i+1}\in I\}$ and we denote by $(|\za_{1}|+\cdots+|\za_{m}|)$ the degree shift of $Ae_{s(\za_{1})}\ot e_{t(\za_{m})}A$  as graded $A$-$A$-bimodule.
There is an exact sequence of graded $A$-$A$-bimodules
\begin{equation}\label{equ:AAresol}  \cdots \ra M_{2} \xra{d_{2}} M_{1} \xra{d_{1}} M_{0}  \xra{d_{0}} A \ra 0  \end{equation}
which gives a minimal $A$-$A$-bimodule resolution of $A$ (see for example \cite[Theorem 4.1]{B}), where $d_{m}(a\ot b):=a\za_{1}\ot b+ (-1)^{m}a\ot \za_{n}b$ and $d_{0}(a\ot b):=ab$.
Since \eqref{equ:AAresol} is contractible,
applying $S_{i}\ot_{A}?$ to \eqref{equ:AAresol}, we have  a minimal resolution of $S_{i}$ as graded $A$-module
\begin{equation}\label{resolofsimple}
P:    \  \cdots \ra \bop\limits_{\za_{1}\za_{2}\in \cal F_{2} \atop s(\za_{1})=i}(e_{t(\za_{2})}A(|\za_{1}|+|\za_{2}|)) \xra{\partial_{2}} \bop\limits_{\za\in Q_{1} \atop s(\za)=i}(e_{t(\za)}A(|\za|)) \xra{\partial_{1}} e_{i}A \ra 0,
  \end{equation}
  whose total complex $\Tot(P)$ gives a projective resolution of $S_{i}$ as dg $A$-module, where $\partial_{m}(x)=(-1)^{m}x\za_{m}$. Then any non-zero morphism from $S_{i}$ to $S_{j}[l]$ in $\D^{\rm b}(A)$ is induced by some non-zero map from $\Top (P^{m})$ to $S_{j}[l]$. Moreover,  we see that $\Hom_{\D^{\rm b}(A)}(S_{i}, S_{j}[l])$ has a $k$-basis
  \[\{\za_{1}\cdots\za_{p}\in \cal F_{p} \mbox{ such that $s(\za_{1})=i, t(\za_{p})=j$ and } p=l+|\za_{1}|+\cdots+|\za_{p}|\}.\]
  In particular, any arrow $\za\in Q_{1}$ gives a non-zero morphism $f_{\za}: S_{s(\za)} \ra S_{t(\za)}[1-|\za|]$. By the construction of $P$, we have $(\f_{\zb}[1-|\za|])\circ f_{\za}\not=0$ if and only if $0\not=\za\zb\in I$.
  Thus the map $\phi: A^{!}\ra \bop_{t\in \Z}\Hom_{\D^{\rm b}(A)}(S, S[t])$ sending $e_{i}$ to  the identity morphism in  $\End_{\D^{\rm b}(A)}(S_{i})$ and $\za^{\rm op}$ to $f_{\za}$ gives an isomorphism of dg algebras.
   \end{proof}

\subsection{Recollements induced by idempotents of graded quadratic monomial algebras}\label{Section:Koszul}
Let $A=kQ/I$ be a graded quadratic monomial algebra. Let $e$ be the sum of $m$ vertex idempotents. Without loss  of generality, we may write  $e=e_{1}+\dots+e_{m}$.
Let $J_{e}$ be the subset of $I$ consisting of all the relations go through vertices $1, \dots, m$.

\begin{Def}\label{definition:left algebra}
We define a new algebra $A_{e}:=kQ_{e}/\langle I_{e}\rangle$ as follows
\begin{enumerate}[\rm(1)]
\item The set of vertices of $Q_{e}$ is $Q_{0}\backslash \{1,2,\dots, m\}$.
\item The arrows of $Q_{e}$ are of the form $[\za_{1}\cdots\za_{s}]:s(\za_{1})\ra t(\za_{s})$ with $\za_{1}\cdots\za_{s}\in J_{s}$ and $s(\za_{1}), t(\za_{s})\not\in \{1,2,\cdots,m \}$.
\item $|[\za_{1}\cdots\za_{s}]|:=\sum_{i}^{s}|\za_{i}|-s+1$.
\item $I_{e}:=\{[\za_{1}\cdots\za_{s}][\zb_{1}\cdots\zb_{t}]\mid \za_{s}\zb_{1}\in I  \}$.
\item The differential is $0$.
\end{enumerate}
\end{Def}

\begin{Ex}\label{Ex:gentleA_e}
In case $A$ is a graded gentle algebra, the algebra $A_e = kQ_e /I_e$  in Definition~\ref{definition:left algebra} is again a graded gentle algebra. In case $e = e_\ell$ is an idempotent corresponding to a single vertex $\ell$, we obtain the following explicit description.
\begin{enumerate}[\rm (1)]
\item The vertex set of $Q_e$ is given by $Q_0 \setminus \{\ell\}$;
\item The arrow set of $Q_e$ is the union
$$\{ [\za]=\ell_i \xra{\za} \ell_j \mid \ell_i \not=\ell \text{ and } \ell_j\not= \ell \}~\sqcup~\{ [\za_1\za_2]=\ell_i \xra{\za_1} \ell \xra{\za
_2} \ell_j \mid \za_1 \za_2\in I\ \mbox{ and } \ell_i \neq \ell  \mbox{ and } \ell_j \neq \ell\}$$
$$~\sqcup~\{ [\za_1 \za_2 \za_3]=\ell_i \xra{\za_1} \ell\xra{\za_2} \ell \xra{\za
_3} \ell_j \mid \za_1\za_2, \za_2\za_3\in I\};$$
\item $|[\alpha]| = |\alpha|$, $|[\za_1 \za_2]| = |\za_1|+|\za_2|-1$ and $|[\za_1 \za_2\za_3]| = |\za_1|+|\za_2|+|\za_3|-2$;
\item The ideal $I_\ell$ is generated by
\[ \{[\za_1][\za_2]\mid \za_1\za_2\in I\}~\sqcup~\{[\za_0][\za_1\za_2]\mid \za_0\za_1\in I \}~\sqcup~\{[\za_1\za_2][\za_3]\mid \za_2\za_3\in I \}\]
\[~\sqcup~\{[\za_0][\za_1\za_2\za_3]\mid \za_0\za_1\in I \}~\sqcup~\{[\za_1\za_2 \za_3][\za_4]\mid \za_3\za_4\in I \}~\sqcup~\{[\za_0\za_1][\za_2\za_3]\mid \za_1\za_2 \in I\}.\]
\end{enumerate}
\end{Ex}

We show

\begin{Thm}\label{Thm:recollement1}
There is a recollement
\begin{equation} \label{mainrecoll}
\xymatrixcolsep{4pc}\xymatrix{
\D(A_{e}) \ar[r]^{i_{*}=i_{!}}
&\D(A) \ar@/_1.5pc/[l]_{i^{*}} \ar@/^1.5pc/[l]^{i^{!}}
 \ar[r]^{j^{*}=?\ot_{A}^{\bf L}Ae}
&\D(eAe) \ar@/^-1.5pc/[l]_{j_{!}=?\ot^{\bf L}_{eAe}eA} \ar@/^1.5pc/[l]^{j_{*}=\RshHom_{eAe}(Ae, ?)}
  }, \end{equation}
 such that $i^{*}$ sends $e_{j}A$ to $e_{j}A_{e}$ and $i_{*}$ sends $\Top(e_{j}A_{e})$ to $\Top (e_{j}A)$ for any $j\not=1,2,\dots, m$.
\end{Thm}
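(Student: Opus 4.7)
The plan is to replace $A$ by a quasi-isomorphic dg algebra in which the idempotent $e$ admits a homological epimorphism, and then transport the resulting recollement back. I set $A':=A_{J_e}$, the partial cofibrant dg resolution from Definition~\ref{Def:AJ} with $J=J_e$. By Theorem~\ref{Thm:AJA}, the canonical map $\phi:A'\to A$ is a quasi-isomorphism of dg algebras. Since $\phi$ fixes the vertex idempotents, its kernel decomposes as $\ker\phi=\bigoplus_{i,j}e_i(\ker\phi)e_j$, with each summand closed under $d'$ and acyclic (the contracting homotopy from the proof of Theorem~\ref{Thm:AJA} preserves these bi-blocks), so $\phi$ restricts to a quasi-isomorphism $eA'e\xrightarrow{\simeq}eAe$. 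It therefore suffices to construct the required recollement with $A$ replaced by $A'$.

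I next identify $A_e$ as a dg quotient of $A'$. The ideal $A'eA'$ is a dg ideal (from $d'(e_i)=0$ and the Leibniz rule), and I claim $A'/A'eA'\cong A_e$ as dg algebras, with $A_e$ equipped with the zero differential. For the underlying graded algebras: the arrows $[\za_1\cdots\za_s]$ of $Q'$ with source and target both outside $\{1,\ldots,m\}$ are precisely the arrows of $Q_e$ (the intermediate vertices $t(\za_i)=s(\za_{i+1})$ are forced into $\{1,\ldots,m\}$ by the definition of $J_s$), and the relations in $I'$ restricted to these generators coincide with $I_e$ (any composition $\za_s\zb_1\in I$ of two such arrows has its contact vertex outside $\{1,\ldots,m\}$, so $\za_s\zb_1\in I\setminus J_e$). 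For the differential, every summand of
\[
d'([\za_1\cdots\za_s])=\sum_{i=1}^{s-1}(-1)^{|[\za_1\cdots\za_i]|}[\za_1\cdots\za_i][\za_{i+1}\cdots\za_s]
\]
factors through a vertex in $\{1,\ldots,m\}$ and so lies in $A'eA'$; the induced differential on $A_e$ therefore vanishes.

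Writing $\pi:A'\twoheadrightarrow A_e$ for the dg quotient map, the main technical step is to show that $\pi$ is a homological epimorphism of dg algebras. By Lemma~\ref{Lem:homepi}, this is equivalent to the multiplication map $\mu:A'e\otimes^{\bf L}_{eA'e}eA'\to A'eA'$ being a quasi-isomorphism of dg $A'$-bimodules. I would prove it by constructing a semi-free model of $eA'$ as a left dg $eA'e$-module built from the arrows $[\za_1\cdots\za_s]$ of $Q'$ that start in $\{1,\ldots,m\}$, and then checking the quasi-isomorphism by a direct comparison of $k$-bases via a contracting homotopy analogous to the one from Theorem~\ref{Thm:AJA}. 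This combinatorial verification is the principal obstacle of the proof.

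Granted the homological epimorphism, Lemma~\ref{Lem:homepi} produces a recollement $\D(A_e)\xrightarrow{\pi_*}\D(A')\to\Loc(X)$ with $X\simeq A'eA'$, while Theorem~\ref{Thm:Ne} applied to the compact generators $\{e_iA':i\in\{1,\ldots,m\}\}$ yields a recollement with right term $\Loc(eA')\simeq\D(eA'e)$ and right functor $?\otimes^{\bf L}_{A'}A'e$. Both share the middle $\D(A')$, and in both the image of the left embedding equals $\{M\in\D(A'):M\otimes^{\bf L}_{A'}A'e=0\}$: the image of $\pi_*$ is annihilated by $?\otimes^{\bf L}_{A'}A'e$ (since $A_e\otimes^{\bf L}_{A'}A'e=0$, which follows from $A'eA'\simeq A'e\otimes^{\bf L}_{eA'e}eA'$ together with $eA'e$ being a direct summand of $eA'$ as a left $eA'e$-module), and conversely $M\otimes^{\bf L}_{A'}A'e=0$ forces $M\otimes^{\bf L}_{A'}A'eA'=0$ via $\mu$, so the triangle $M\otimes^{\bf L}_{A'}A'eA'\to M\to M\otimes^{\bf L}_{A'}A_e$ identifies $M$ with $\pi_*\pi^*(M)$. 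The TTF triples therefore coincide (Proposition~\ref{Prop:TTF}), producing the recollement $\D(A_e)\to\D(A')\to\D(eA'e)$. Transporting through $\phi$ yields~\eqref{mainrecoll}. The final assertion is then immediate: $i^*=?\otimes^{\bf L}_{A'}A_e$ sends the projective $e_jA'$ to $e_jA_e$, and $i_*=\pi_*$ sends $\Top(e_jA_e)$ to $\Top(e_jA')$, corresponding under $\phi$ to $\Top(e_jA)$.
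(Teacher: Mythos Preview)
Your overall strategy matches the paper's exactly: replace $A$ by $B:=A_{J_e}$ via Theorem~\ref{Thm:AJA}, show that $\pi:B\to A_e=B/BeB$ is a homological epimorphism, deduce the recollement for $B$, and transport it along $\phi$. The identification $A_e\cong B/BeB$ and the computation that the induced differential vanishes are correct.

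The gap is in the ``principal obstacle''. You propose to verify the stratifying-type condition $\mu:A'e\otimes^{\bf L}_{eA'e}eA'\xrightarrow{\simeq}A'eA'$ by constructing semi-free models and a contracting homotopy, but you do not carry this out, and you misattribute this criterion to Lemma~\ref{Lem:homepi} (which instead asks for $A_e\otimes^{\bf L}_{B}A_e\cong A_e$). More to the point, you are missing the one-line observation that makes the whole verification trivial and that is the actual reason for passing to $B=A_{J_e}$ in the first place: by construction, \emph{all} relations in $B$ through the vertices $1,\ldots,m$ have been resolved, so there are no relations in $I'$ whose middle vertex lies in $\{1,\ldots,m\}$. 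Hence for any path $l$ in $B$ ending at such a vertex $i$, left multiplication by $l$ gives an isomorphism $e_iB\xrightarrow{\sim}l\cdot e_iB$ of right dg $B$-modules, and therefore $BeB=\bigoplus_{s,l}l\cdot e_iB$ is a projective right dg $B$-module. It follows immediately that $BeB\otimes^{\bf L}_{B}A_e=BeB\otimes_{B}B/BeB=0$, and the triangle $BeB\to B\to A_e$ then yields $A_e\otimes^{\bf L}_{B}A_e\cong A_e$, which is exactly Lemma~\ref{Lem:homepi}(2). This replaces your entire semi-free/homotopy argument.

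Once the homological epimorphism is established, there is no need to invoke Theorem~\ref{Thm:Ne} separately and match TTF triples: Lemma~\ref{Lem:homepi}(3) already produces the recollement with $\Loc(BeB)$ on the right, and since $BeB$ is projective and generated by $eB$, the standard adjunctions identify this with $\D(eBe)$ and the functors with $?\otimes^{\bf L}_B Be$, $?\otimes^{\bf L}_{eBe}eB$, $\RshHom_{eBe}(Be,?)$ directly. The final transport along $\phi$ and the identification of $i^*(e_jA)$ and $i_*(\Top(e_jA_e))$ are as you describe.
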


\begin{Rem}
Note that if $J_{e}=I$,
then \eqref{mainrecoll} is given by \cite[Theorem 7.1]{KY}. One may  also use Drinfeld's construction of dg quotient \cite{Drinfeld} to give a dg algebra  $C$ such that $\D(C)$ fits the left side of \eqref{mainrecoll}. We point out that $C$ is quasi-isomorphic to $A_{e}$, but  in the case that $A$ is gentle, $C$ is not gentle anymore and the structure of $C$ may be complicated.
\end{Rem}

Recall  that we constructed a dg algebra $A_{J_{e}}$ from $J_{e}$ (see Definition \ref{Def:AJ}). To simplify the notation, we write $B=A_{J_{e}}$.
The following observation is important for us.
\begin{Lem} \label{Lem:A'recB}
We have $A_{e}=B/BeB$ and
the natural morphism $B \ra A_{e}$ is a homological epimorphism of dg $k$-algebras. Moreover, there is a recollement
    \[ \xymatrixcolsep{4pc}\xymatrix{\D(A_{e}) \ar[r]^{\w{i_{*}}=\rm{inc.}} &\D(B) \ar@/_1.5pc/[l]_{\w{i^{*}}=?\ot_{B}^{\bf L}A_{e}} \ar@/^1.5pc/[l]^{\w{i^{!}}=\RshHom_{B}(A_{e}, ?)} \ar[r]^{\w{j^{*}}=?\ot_{B_{{e}}}^{\bf L}B_{{e}}e}  &\D(eBe) \ar@/^-1.5pc/[l]_{\w{j_{!}}=?\ot_{eB_{{e}}e}^{\bf L}eB_{{e}}} \ar@/^1.5pc/[l]^{\w{j_{*}}=\RshHom_{eB e}(Be, ?)}
  }. \]
\end{Lem}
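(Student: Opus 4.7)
The proof naturally splits into two parts: identifying $A_e$ with $B/BeB$ as dg algebras, and showing the quotient map $B\twoheadrightarrow A_e$ is a homological epimorphism; the recollement then follows by the standard dg-algebra analogue of Example \ref{Ex:idem} applied to the dg algebra $B$ with idempotent $e$.

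For the identification, I would compare quivers, gradings, relations, and differentials of $B/BeB$ and $A_e$ side by side. Both have vertex set $Q_0\setminus V_e$, where $V_e=\{1,\dots,m\}$. An arrow $[\alpha_1\cdots\alpha_n]$ of $Q'$ survives in $B/BeB$ iff its two endpoints lie outside $V_e$, which matches the arrow set of $Q_e$ with the same grading formula. A relation $[\alpha_1\cdots\alpha_n][\beta_1\cdots\beta_k]$ from $I'$ survives in the quotient iff the junction $t(\alpha_n)=s(\beta_1)$ lies outside $V_e$; under this constraint the condition $\alpha_n\beta_1\in I\setminus J_e$ is automatically equivalent to $\alpha_n\beta_1\in I$ (since $J_e$ only contains relations passing through $V_e$), which is exactly the defining condition of $I_e$. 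Finally, each summand $[\alpha_1\cdots\alpha_i][\alpha_{i+1}\cdots\alpha_n]$ in $d'([\alpha_1\cdots\alpha_n])$ has junction $t(\alpha_i)\in V_e$ (because $\alpha_i\alpha_{i+1}\in J_e$), hence lies in $BeB$; so $d'$ descends to the zero differential on $A_e=B/BeB$.

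By Lemma \ref{Lem:homepi}, the homological epimorphism property is equivalent to $A_e\otimes^{\mathbf{L}}_B A_e\cong A_e$ in $\mathcal{D}(A_e)$. Tensoring the triangle $BeB\to B\to A_e\to BeB[1]$ of dg $B$-bimodules by $A_e$ over $B$ reduces this to the vanishing $BeB\otimes^{\mathbf{L}}_B A_e=0$. The key technical claim is that $BeB$ is a \emph{stratifying dg-ideal}, namely that multiplication induces a quasi-isomorphism
\[ Be\otimes^{\mathbf{L}}_{eBe} eB\;\xrightarrow{\sim}\;BeB \]
in $\mathcal{D}(B\otimes_k B^{\mathrm{op}})$. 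Granted this, we obtain $BeB\otimes^{\mathbf{L}}_B A_e\cong Be\otimes^{\mathbf{L}}_{eBe} eA_e=0$, using that $eB\subseteq BeB$ forces $eA_e=0$. The full six-functor recollement then arises automatically from the idempotent $e$ in $B$ via the dg version of Example \ref{Ex:idem}, with the functors as displayed in the statement.

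The main obstacle is proving the stratifying condition, which I would attack in two stages. First, on the underived level, a direct path analysis in $Q'$ shows that multiplication gives an isomorphism of graded $B$-bimodules $Be\otimes_{eBe}eB\xrightarrow{\sim}BeB$: a path $[\alpha^{(1)}]\cdots[\alpha^{(n)}]$ lies in $BeB$ exactly when its source, target, or one of the junctions $t([\alpha^{(i)}])$ belongs to $V_e$, one may cut the path at any such visit, and any two cut-positions are related by moving an $eBe$-factor across the tensor (the \emph{internal} vertices of a single $Q'$-arrow $[\alpha_{j,1}\cdots\alpha_{j,k_j}]$, although lying in $V_e$, do not produce visits at the level of $B$ and so do not provide additional decompositions). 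Second, to identify the derived tensor with the underived one, I would show that $eB$ is K-projective as a left $eBe$-module: every path in $eB$ factors uniquely as $r\cdot\pi_0$ with $r\in eBe$ and $\pi_0$ a \emph{primitive} path (one that leaves $V_e$ at its first $Q'$-junction), and a filtration of $eB$ by the $Q'$-length of the primitive tail produces a cell structure over $eBe$ whose subquotients are shifted copies of the projective modules $eBe\cdot e_{s(\pi_0)}$. This cofibrancy completes the verification and hence the proof.
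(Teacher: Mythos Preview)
Your argument is correct, and up through the reduction to $BeB\otimes_B^{\mathbf L}A_e=0$ it matches the paper verbatim (including the identification $A_e=B/BeB$ and the use of Lemma~\ref{Lem:homepi}). The divergence is in how you establish that vanishing.

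You prove it by verifying the stratifying condition $Be\otimes_{eBe}^{\mathbf L}eB\simeq BeB$, which requires both the underived isomorphism and a cofibrancy argument for $eB$ over $eBe$. The paper instead exploits the same underlying fact --- that by construction $I'$ contains \emph{no} relation whose junction lies in $V_e$ --- to show directly that $BeB$ is K-projective as a right dg $B$-module: since left multiplication by any nonzero path $l$ ending in $V_e$ gives $l\,e_iB\cong e_iB$, one obtains (after filtering by the position of the first $V_e$-visit) a semi-free structure on $BeB$ with subquotients that are sums of shifts of the representable modules $e_iB$. Hence $BeB\otimes_B^{\mathbf L}A_e=BeB\otimes_B(B/BeB)=0$ immediately, bypassing the stratifying condition altogether.

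Both routes rest on the absence of relations at $V_e$; the paper's is shorter because it works over $B$ rather than over $eBe$. Your approach does buy something: it makes explicit that $BeB$ is a stratifying dg ideal, which is precisely the hypothesis that guarantees the full recollement from the idempotent (the dg analogue of Example~\ref{Ex:idem}) --- so your detour also justifies the recollement diagram directly, whereas the paper invokes Lemma~\ref{Lem:homepi}(3).
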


\begin{proof}
It suffices to show $A_{e} \ot_{B}^{\bf L} A_{e} \cong A_{e}$ in $\D(A_{e})$ by Lemma \ref{Lem:homepi} (2).
Notice that we have the following canonical triangle in $\D(B\ot_{k}B^{\rm op})$
\[BeB \ra B \xra{p} A_{e} \ra BeB[1].  \]
Applying $?\ot_{B}^{\bf L} A_{e}$ to this triangle, we get a triangle in $\D (A_{e})$
\[ BeB\ot_{B}^{\bf L}{ A_{e}} \ra A_{e} \ra A_{e}\ot_{B}^{\bf L} A_{e}\ra BeB\ot_{B}^{\bf L}A_{e}[1].\]
We claim that $BeB\ot_{B}^{\bf L}A_{e} =0$, then $A_{e} \cong A_{e}\ot_{B}^{\bf L}A_{e}$ in $\D(A_{e})$ holds.
Note that  $BeB=\bop_{s\in Q_{0}}e_{s}BeB$ as right dg $B$-module. Let $L_{s}$ be the set of paths from $s$ to $i$, which are not in $I$. By our construction of $Q'$, there is no relation at $i$, so we have $le_{i}B\cong e_{i}B$ as right dg $B$-module for any $l\in L_{s}$. So $BeB=\bop_{s\in Q_{0}}\bop_{l\in L_{s}} le_{i}B$ is a  projective module.
 Then $BeB\ot_{B}^{\bf L}A_{e}=BeB\ot_{B}B/BeB=0$. Then the claims follows. So the assertion is true.
\end{proof}

Now we show Theorem \ref{Thm:recollement1}.
\begin{proof}[Proof of Theorem \ref{Thm:recollement1}]
By Theorem \ref{Thm:AJA}, there is a quasi-isomorphism $\phi: B \ra A$, it induces a quasi-isomorphism $eBe \ra eAe$.
Then we have triangle equivalences $\D(A) \xra[\simeq]{\phi^{*}} \D(B)$ and $\D(e_{i}Ae_{i})\xra[\simeq]{\phi^{*}} \D(e_{i}Be_{i})$. Consider the recollement constructed in Lemma  \ref{Lem:A'recB}.
Let $i^{*}:=\w{i^{*}}\phi^{*}$, $i_{*}:=(\phi^{*})^{-1}\w{i_{*}}$ and $i^{!}:=\w{i^{!}}\phi^{*}$.
Then we get the recollement \eqref{mainrecoll}. Since $\phi$ induces a quasi-isomorphism $e_{j}B\ra e_{j}A$ and $\w{i_{*}}(\Top(e_{j}A_{e}))=\Top(e_{j}B)$, we have
\[i^{*}(e_{j}A)=\w{i^{*}}\phi^{*}(e_{j}A)\cong \w{i^{*}}(e_{j}B)=e_{j}B\ot_{B}^{\bf L}A_{e}=e_{j}A_{e}\]
and
\[ i_{*}(\Top(e_{j}A_{e}))=(\phi^{*})^{-1}\w{i_{*}}(\Top(e_{j}A_{e}))=(\phi^{*})^{-1}(\Top(e_{j}B))=\Top(e_{j}A),  \]
 for any $j\not=1,2,\dots, m$. So the assertion is true.
\end{proof}

\begin{Ex}
Let $Q$ be the graded quiver $\xymatrix{1 \ar[r]^{\za} & 2 \ar[r]^{\zg} \ar@(ul,ur)^{\zb}& 3}$. Let $I:=\{\za\zb,\zb^{2},\zb\zg\}$ and $A=kQ/\langle I \rangle$. Let $e=e_{2}$. Then $J_{e}=I$ and $A_{e}$ is given by the following `infinite Kronecker quiver'
\[\xymatrixcolsep{5pc}\xymatrix{
1 \ar@<-.5pc>[r]_{[\za\zb\zg]}   \ar@<.5pc>[r]_{[\za\zb^{2}\zg]}  \ar@<1.5pc>[r]_{[\za\zb^{3}\zg]}^{\cdots}&3
 }\]
where $|[\za\zb^{n}\zg]|:=|\za|+n|\zb|+|\zg|-n-1$.
\end{Ex}

 We give a useful observation.

\begin{Prop}\label{Prop:e'e''}
Let $e=e'+e''$. Then we have isomorphisms  $(A_{e'})_{e''}\cong A_{e}\cong (A_{e''})_{e'}$ of dg algebras.
\end{Prop}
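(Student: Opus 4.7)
The plan is to build an explicit dg-algebra isomorphism $\Phi \colon (A_{e'})_{e''} \to A_e$ by \emph{flattening} the two-level bracket structure on the left into the one-level bracket structure on the right, and then obtain the other isomorphism by interchanging the roles of $e'$ and $e''$. Since $e = e' + e''$ is idempotent, $e'$ and $e''$ are orthogonal, so they correspond to disjoint vertex sets; write $V' \subset Q_0$ for the support of $e'$, $V''$ for the support of $e''$, and $V = V' \sqcup V''$ for the support of $e$.

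Unwinding Definition~\ref{definition:left algebra} applied twice, an arrow of $(A_{e'})_{e''}$ has the form $[\beta^{(1)} \cdots \beta^{(t)}]$, where each $\beta^{(i)} = [\alpha_1^{(i)} \cdots \alpha_{s_i}^{(i)}]$ is an arrow of $A_{e'}$ (its interior gluings lie in $V'$, its endpoints outside $V'$), consecutive pairs $\beta^{(i)}\beta^{(i+1)}$ lie in $I_{e'}$ with gluing vertex in $V''$, and the outer endpoints lie outside $V$. I would define
\[ \Phi\bigl([\beta^{(1)} \cdots \beta^{(t)}]\bigr) := [\alpha_1^{(1)} \cdots \alpha_{s_1}^{(1)} \alpha_1^{(2)} \cdots \alpha_{s_t}^{(t)}]. \]
Each interior pair inside a single $\beta^{(i)}$ lies in $J_{e'} \subset J_e$, and each bridging pair $\alpha_{s_i}^{(i)} \alpha_1^{(i+1)} \in I$ glues at a $V''$-vertex so also belongs to $J_e$; the outer endpoints lie outside $V$. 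Hence the flattened bracket is a bona fide arrow of $Q_e$.

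An inverse is given by splitting an $A_e$-arrow $[\alpha_1 \cdots \alpha_n]$ at precisely those positions $i$ where the gluing vertex $t(\alpha_i) = s(\alpha_{i+1})$ lies in $V''$: between two consecutive such positions all interior gluings lie in $V'$, yielding an $A_{e'}$-arrow, and the assembled list is a $(Q_{e'})_{e''}$-arrow. The gradings agree by the telescoping
\[ \sum_{i=1}^{t}\Bigl(\sum_{j=1}^{s_i} |\alpha_j^{(i)}| - s_i + 1\Bigr) - t + 1 \;=\; \sum_{i,j} |\alpha_j^{(i)}| - n + 1, \]
where $n = \sum_i s_i$. For the ideals, a generator $[\beta^{(1)} \cdots \beta^{(t)}][\gamma^{(1)} \cdots \gamma^{(u)}]$ of $I_{(A_{e'})_{e''}}$ is characterised by $\beta^{(t)}\gamma^{(1)} \in I_{e'}$, which unpacks to the deep gluing $\alpha_{s_t}^{(t)}\delta_1^{(1)} \in I$, with this gluing vertex outside $V'$ (since $\beta^{(t)}$ and $\gamma^{(1)}$ are $A_{e'}$-arrows) and outside $V''$ (since $[\beta^{(1)} \cdots \beta^{(t)}]$ and $[\gamma^{(1)} \cdots \gamma^{(u)}]$ are $(Q_{e'})_{e''}$-arrows), hence outside $V$. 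Thus the flattened expression is a generator of $I_e$, and this correspondence is clearly bijective.

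Since both dg algebras carry zero differential, $\Phi$ is an isomorphism of dg $k$-algebras. The isomorphism $(A_{e''})_{e'} \cong A_e$ is obtained by the same argument with $e'$ and $e''$ interchanged. The main obstacle is purely notational: keeping careful track of the nested brackets and checking that the two possible splitting conventions --- first at $V''$-vertices or first at $V'$-vertices --- reassemble into the same underlying path structure of $A_e$, namely paths in $Q$ whose intermediate vertices lie in $V$ and whose endpoints lie outside $V$.
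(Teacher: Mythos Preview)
Your proof is correct and takes essentially the same approach as the paper: both arguments set up the bijection of arrows by flattening the nested bracket $[\beta^{(1)}\cdots\beta^{(t)}]$ into a single $A_e$-arrow, invoking the decomposition $J_e = J_{e'}\sqcup J_{e''}$ to see that every interior gluing lies in $J_e$, and then check that the relations match. Your version is in fact slightly more thorough than the paper's, since you verify the grading identity by the telescoping sum and note explicitly that both differentials vanish, points the paper leaves implicit.
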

\begin{proof}
We only show that there is an isomorphism $(A_{e'})_{e''}\cong A_{e}$ of dg $k$-algebras.
We may assume $e'=e_{1}+\cdots+e_{l}$ and $e''=e_{l+1}+\cdots+e_{m}$ for $1\le l<m$.
First note that the induced quivers $Q_{e}$ and $(Q_{e'})_{e''}$ are isomorphic. In fact, they have same sets of vertices given by $Q_{0}\backslash \{1,2,\dots,m \}$. Moreover, any arrow in $A_{e}$ has the form $[\za_{1}\cdots\za_{m}]$ such that $\za_{i}\za_{i+1}\in J_{e}$ for any $1\le i\le m-1$. Set $p_{0}=0$, since $J_{e}=J_{e'}\sqcup J_{e''}$, we may write
\begin{equation}\label{equ:Aee}
[\za_{1}\cdots\za_{m}]=[(\za_{p_{0}+1}\cdots\za_{p_{1}})(\za_{p_{1}+1}\cdots\za_{p_{2}})\cdots(\za_{p_{l}+1}\cdots\za_{m})],
\end{equation}
where $\za_{p_{j}+q(j)}\za_{p_{j}+q(j)+1}\in J_{e'}$ and $\za_{p_{j+1}}\za_{p_{j+1}+1}\in J_{e''}$ for any $0\le j\le l-1$ and $1\le q(j)\le p_{j+1}-1$. It is clear that \eqref{equ:Aee} gives rise to a bijection between the set of arrows in $Q_{e}$ and that in $(Q_{e'})_{e''}$.  So $Q_{e}$ and $(Q_{e'})_{e''}$ are isomorphic. Also it is easy to check that $I_{e}$ and $(I_{e'})_{e''}$ are also isomorphic by Definition \ref{definition:left algebra}(4). Thus $A_{e}$ is isomorphic to $(A_{e'})_{e''}$.
\end{proof}

We can describe $A_{e}$ by using $A^{!}$.

We have the following useful observations.
\begin{Prop}\label{Prop:Koszul}
There are isomorphisms of dg $k$-algebras.
\begin{enumerate}[\rm (1)]
\item $A_{e}\cong ((1-e)A^{!}(1-e))^{!}$.
 \item $eAe\cong ((A^{!})_{1-e})^{!}$.
 \end{enumerate}
\end{Prop}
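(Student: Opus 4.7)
The plan is to prove (1) directly, by Koszul-duality bookkeeping, and to deduce (2) from (1) using the involutivity of the Koszul dual. Throughout, every algebra in sight carries zero differential, so each isomorphism of graded algebras below is automatically an isomorphism of dg algebras.

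For part (1), the first step is to show that $B := (1-e)A^{!}(1-e)$ is itself a graded quadratic monomial algebra, with a (generally infinite) generating quiver. Concretely, elements of $B$ are $k$-linear combinations of nonzero paths in $A^{!}$ of the form $\za_{s}^{\rm op}\cdots\za_{1}^{\rm op}$ (with every consecutive $\za_{i}\za_{i+1}\in I$) whose outer vertices lie outside $\{1,\dots,m\}$. Each such path factors uniquely (read left-to-right in $Q^{\rm op}$) as a product of \emph{irreducible} paths $\widetilde{\za}=\za_{s}^{\rm op}\cdots\za_{1}^{\rm op}$ whose internal vertices all lie in $\{1,\dots,m\}$, and these irreducible paths are in bijection with the arrows $[\za_{1}\cdots\za_{s}]$ of $Q_{e}$, with reversed direction. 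A direct check shows that for any two composable irreducible paths the product $\widetilde{\za}\widetilde{\gamma}$ is nonzero in $A^{!}$ iff $\gamma_{t}\za_{1}\in I$, and no relation of length $\geq 3$ among the $\widetilde{\bullet}$'s is needed, since a triple product $\widetilde{\za}\widetilde{\beta}\widetilde{\gamma}$ is nonzero precisely when both adjacent pairwise products are. Hence $B$ is quadratic monomial with quiver $(Q_{e})^{\rm op}$, and the grading inherited from $A^{!}$ is $|\widetilde{\za}|=s-\sum|\za_{i}|$. The next step is to compute the Koszul dual $B^{!}$ and compare it to $A_{e}$: reversing $(Q_{e})^{\rm op}$ restores $Q_{e}$ as vertex-arrow data; dualising the grading via $1-|\widetilde{\za}|=\sum|\za_{i}|-s+1$ recovers item~(3) of Definition~\ref{definition:left algebra}; and the relations of $B^{!}$ are exactly the \emph{nonzero} length-two products of $B$, i.e.\ pairs $[\gamma_{1}\cdots\gamma_{t}][\za_{1}\cdots\za_{s}]$ with $\gamma_{t}\za_{1}\in I$, matching item~(4) of Definition~\ref{definition:left algebra}. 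This yields the isomorphism $A_{e}\cong B^{!}$.

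For part (2), the plan is to apply (1) with $A$ replaced by $A^{!}$ and $e$ by $1-e$, obtaining $(A^{!})_{1-e}\cong \bigl(e(A^{!})^{!}e\bigr)^{!}=(eAe)^{!}$, and then to take the Koszul dual of both sides and invoke $(C^{!})^{!}\cong C$ for any graded quadratic monomial algebra $C$. For this to make sense one needs $eAe$ itself to be a (possibly infinite-quiver) graded quadratic monomial algebra; this is established by the same irreducible-path analysis as for $B$, applied directly to $A$: nonzero paths in $A$ from $\{1,\dots,m\}$ to $\{1,\dots,m\}$ split uniquely into irreducible pieces with internal vertices avoiding $\{1,\dots,m\}$, with all obstructions appearing only at the length-two junctions. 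The main technical obstacle throughout is precisely this verification of quadraticity for the non-finitely-presented algebras $B$ and $eAe$; once it is in place, involutivity of the Koszul dual and the bookkeeping above give both assertions.
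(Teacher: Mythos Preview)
Your proof is correct and follows essentially the same route as the paper's: both arguments identify the arrows of $(1-e)A^{!}(1-e)$ with the reversed arrows $[\za_{1}\cdots\za_{s}]$ of $Q_{e}$, match the gradings via $1-(s-\sum|\za_{i}|)=\sum|\za_{i}|-s+1$, check that the quadratic relations on each side correspond (a product vanishes in $B^{!}$ iff it is nonzero in $B$ iff the junction lies in $I$), and then deduce (2) from (1) together with $(A^{!})^{!}=A$. Your write-up is in fact a bit more explicit than the paper's about why $(1-e)A^{!}(1-e)$ and $eAe$ are again quadratic monomial (the unique irreducible-path factorisation and the observation that all higher obstructions reduce to length-two junctions), which the paper leaves implicit.
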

\begin{proof}
We only show $(1)$, since $(2)$ follows directly from $(1)$ and $(A^{!})^{!}=A$.

First note that both algebras have  the same vertices.
Assume $Q_{0}={1,2, \dots, n}$. Then $1-e=e_{m+1}+\cdots+e_{n}$. Any arrow $l^{\rm op}$ in  $(1-e)A^{!}(1-e)$ is given by a path $l^{\rm op}=\za_{t}^{\rm op}\cdots\za_{1}^{\rm op}$ with degree $(t-\sum_{i=1}^{t}|\za_{i}|, 1)$ such that $s(\za_{t}^{\rm op}), t(\za_{1}^{\rm op}) \in \{m+1,\dots, n\}$ and $\za_{j}^{\rm op}\za_{j-1}^{\rm op}\not\in J_{e}^{\perp}$ for any $2\le j\le t$. Then $l=\za_{1}\cdots\za_{t}$ gives rise to an arrow of $((1-e)A^{!}(1-e))^{!}$ with $\deg l =(\sum_{i=1}^{t}|\za_{i}|-t+1, 1)$. Moreover, $s(\za_{1}), t(\za_{t})\not\in\{1,\dots,m\}$  and $\za_{j}\za_{j+1}\in J_{e}$ for any $1\le j\le t-1$. Then $l$ also gives rise to an arrow of $A_{e}$.

Let $l'=\zb_{1}\cdots\zb_{s}$ be another arrow.  Then $ll'=0$ in $((1-e)A^{!}(1-e))^{!}$ if and only if $l'^{\rm op}l^{\rm op}\not=0$ in $(1-e)A^{!}(1-e)$, if and only if $\za_{t}\zb_{1}\in I\backslash J_{e}$, if and only if $ll'=0$ in $A_{e}$.

So the two algebras have same arrows (with same grading) and same relations. Thus they are isomorphic.
\end{proof}

\section{Recollements arising from cutting surfaces }\label{Cut the graded marked surfaces}

In this section, we apply Theorem~\ref{Thm:recollement1} to the class of graded gentle algebras. In particular, we show that  both the left and right side of the recollement in Theorem~\ref{Thm:recollement1} correspond to cutting the   surface associated to the graded gentle algebra along arcs of the corresponding graded  surface  dissection and its dual, respectively.

More precisely, let
$A(\zD)$ be a graded gentle algebra associated to the graded marked surface $(S,M, \zD)$ and let $L$ be a subset of the arcs in the surface dissection $\zD$.
 In the following we define the  graded cut marked surface $(S_{L}, M_{L},\zD_{L})$ obtained from $(S,M, \zD)$  by cutting along the arcs in $L$.
We can also consider the graded marked surface obtained by cutting along arcs in $\zD^*$, for the dual graded surface dissection $\zD^*$ of $(S,M,\zD)$.

We begin by stating the main result of this section as well as some consequences before introducing the necessary definitions and proving  the stated results.

\begin{Thm}\label{main theorem cutting surfaces}
Let $(S,M, \zD)$ be a  graded marked surface with  associated graded gentle algebra $A(\zD)$. Let $L$ be a subset of $\zD$.
Then we have the following recollement
\begin{equation}\label{eq:recollement}
\xymatrixcolsep{4pc}\xymatrix{
\D(A(\zD_{L})) \ar[r]^{}
&\D(A(\zD)) \ar@/_1.5pc/[l]_{} \ar@/^1.5pc/[l]^{}
 \ar[r]^{}
&\D(A(L)) \ar@/^-1.5pc/[l]_{} \ar@/^1.5pc/[l]^{},
  } \end{equation}
where $\zD_{L}$ is the induced graded surface dissection obtained by cutting $(S,M, \zD)$ along  $L$ and where $L$ is realized  as the graded surface dissection of  the surface $(S_{\zD^* \setminus L^*},M_{\zD^* \setminus L^*})$ which  is  obtained  by cutting  $(S, M, \zD^*)$ along $\zD^* \setminus L^*$ and dualising the induced surface dissection $L^{*}$.
\end{Thm}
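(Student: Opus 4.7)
The proof proceeds by reducing the theorem to Theorem~\ref{Thm:recollement1} via a judicious choice of idempotent. Let $e=\sum_{\ell\in L}e_\ell\in A(\zD)$, the sum of the vertex idempotents corresponding to the arcs in $L$. Applying Theorem~\ref{Thm:recollement1} directly yields a recollement
\[ \xymatrixcolsep{3pc}\xymatrix{\D(A(\zD)_e)\ar[r]&\D(A(\zD))\ar@/_1pc/[l]\ar@/^1pc/[l]\ar[r]&\D(eA(\zD)e),\ar@/_1pc/[l]\ar@/^1pc/[l]} \]
so the task reduces to the two algebra identifications $A(\zD)_e\cong A(\zD_L)$ and $eA(\zD)e\cong A(L)$.

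For the right-hand identification, the plan is to invoke Proposition~\ref{Prop:Koszul}(2), which gives an isomorphism $eA(\zD)e\cong\bigl((A(\zD)^{!})_{1-e}\bigr)^{!}$ of dg $k$-algebras. Under the duality between $\gpoint$- and $\rpoint$-dissections (Proposition-Definition~\ref{definition: dual graded dissection}), one has $A(\zD)^{!}\cong A(\zD^*)$ compatibly with the idempotent decomposition, so the idempotent $1-e$ in $A(\zD^*)$ corresponds precisely to the arcs $\zD^*\setminus L^*$. Thus $eA(\zD)e^{!}\cong (A(\zD^*))_{1-e}$, and the left-hand identification (applied to $\zD^*$ and the subset $\zD^*\setminus L^*$) would yield $(A(\zD^*))_{1-e}\cong A((\zD^*)_{\zD^*\setminus L^*})$, i.e.\ the gentle algebra of the cut surface $(S_{\zD^*\setminus L^*},M_{\zD^*\setminus L^*})$ with remaining dissection $L^*$. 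Dualising back then gives $eA(\zD)e\cong A(L)$ with $L$ realised exactly as in the statement. Hence both identifications follow once the single geometric claim $A(\zD)_e\cong A(\zD_L)$ is established.

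This geometric claim is the main obstacle and the only place where the topology of cutting enters. The strategy is to compare quivers and relations using the explicit description of $A_e$ from Example~\ref{Ex:gentleA_e}, extended inductively to $e=\sum_{\ell\in L}e_\ell$ via Proposition~\ref{Prop:e'e''} (so it suffices to treat the case $|L|=1$, and then cut along one arc at a time). For a single arc $\ell\in\zD$, cutting $(S,M,\zD)$ along $\ell$ duplicates $\ell$ into two boundary segments and splits each of its endpoints into (at most) two marked points; the remaining arcs of $\zD\setminus\{\ell\}$ form the new dissection $\zD_\ell$. At each side of the cut, the minimal oriented intersections of $\zD_\ell$ are exactly those minimal oriented intersections $\alpha_1\cdots\alpha_s$ of $\zD$ that avoid $\ell$ ($s=1$), or bounce at $\ell$ once ($s=2$, i.e.\ $\alpha_1\alpha_2\in I$ through $\ell$), or bounce at $\ell$ twice when $\ell$ carries a loop with self-relations ($s=3$), which matches precisely the arrow set of $A(\zD)_e$ in Example~\ref{Ex:gentleA_e}. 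The grading $|[\alpha_1\cdots\alpha_s]|=\sum|\alpha_i|-s+1$ coincides with the induced grading under the foliation on the cut surface. Finally, the quadratic relations enumerated in Example~\ref{Ex:gentleA_e}(4) are exactly the compositions at the endpoints of $\zD_\ell$ that land in the complementary polygon, i.e.\ the relations defining $A(\zD_\ell)$. This case-by-case verification, together with the iterative argument via Proposition~\ref{Prop:e'e''}, yields $A(\zD)_e\cong A(\zD_L)$ and completes the proof.
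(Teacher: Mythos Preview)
Your proposal is correct and follows essentially the same route as the paper: apply Theorem~\ref{Thm:recollement1} with $e=\sum_{\ell\in L}e_\ell$, identify the left side via the geometric isomorphism $A(\zD)_e\cong A(\zD_L)$ (this is exactly Proposition~\ref{lemma:induce arrows} for a single arc, iterated via Proposition~\ref{Prop:e'e''}, yielding Proposition~\ref{proposition:red dual}), and identify the right side via Proposition~\ref{Prop:Koszul}(2) together with the dual-dissection version of the same geometric isomorphism. One small caveat: your sentence that the grading on $[\za_1\cdots\za_s]$ ``coincides with the induced grading under the foliation on the cut surface'' overstates things slightly---in the paper the grading $G_\ell$ on the cut surface is \emph{defined} so that $A(\zD_\ell)\cong A(\zD)_\ell$ as graded algebras (see the Definition following Proposition~\ref{lemma:induce arrows}), rather than being independently derived from a foliation.
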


We note that in \cite[Section 1.3]{A} a similar  recollement to the above has been suggested by considering a cofibrant dg algebra resolution of $A(\zD)$ which  is also related to surface cuts.

We now give an example of Theorem \ref{main theorem cutting surfaces}.

\begin{Ex}\label{example:recollement-cut-surface}
Consider the algebra $1 \xra{\alpha} 2 \xra{\beta} 3 \xra{\gamma} 4 \xra{\delta} 5$ with $\alpha \beta, \gamma\delta \in I$ corresponding to the surface dissection $\zD = \{ \ell_1, \ell_2, \ell_3, \ell_4, \ell_5 \}$ in Figure~\ref{figure:recollement-cut-surface} and with $L=\{\ell_2, \ell_4\}$.  Suppose further that $|\alpha| = a, |\beta| = b$,  $|\gamma| = c$ and $|\delta |= d$.  Then the surfaces corresponding the left and the right side of the recollement in Theorem \ref{main theorem cutting surfaces} are given in Figure~\ref{figure:recollement-cut-surface}. We note that the arrows $[\alpha \beta] : \overline{\ell_1} \to \overline{\ell_3}$  and $[\gamma \delta] : \overline{\ell_3} \to \overline{\ell_5}$ in $A(\zD_L)$ have degrees $|[\alpha \beta]| = a+b-1$ and $|[\gamma \delta]| = c+d-1$, respectively.  The induced arrow $ \beta \gamma : \ell_2 \to \ell_4$ in $A(L)$ has degree $b+c$.

\begin{figure}[ht]
\begin{center}
\begin{tikzpicture}[scale=0.45]
    \draw (-7,-.5) node {};
	\draw (0,0) circle (4cm);
    \draw (0,-4) coordinate (V1) node {$\gpoint$};
    \draw (0,4) coordinate (V6) node {$\gpoint$};
    \draw (-4,0) coordinate (V9) node {$\rpoint$};
    \draw (4,0) coordinate (V3) node {$\rpoint$};
    \draw (-2,3.5) coordinate (V7) node {$\rpoint$};
    \draw (2,3.5) coordinate (V5) node {$\rpoint$};
    \draw (-2,-3.5) coordinate (V70) node {$\rpoint$};
    \draw (2,-3.5) coordinate (V50) node {$\rpoint$};
    \draw (3.5,-2) coordinate (V2) node {$\gpoint$};
    \draw (3.5,2) coordinate (V4) node {$\gpoint$};
    \draw (-3.5,-2) coordinate (V10) node {$\gpoint$};
    \draw (-3.5,2) coordinate (V8) node {$\gpoint$};

    \draw[dark-green,thick,bend right](V10)to(V8)to(V6)to(V4)to(V2);
    \draw[dark-green,thick](V1)to(V6);
    \draw[red,thick](V70)to(V9);
    \draw[red,thick](V70)to(V7);
    \draw[red,thick](V50)to(V5);
    \draw[red,thick](V50)to(V3);
    \draw[red,thick,bend right](V50)to(V70);
    \draw (-3.2,0) node {\tiny$\ell_1$};
    \draw (3.3,0) node {\tiny$\ell_5$};
    \draw (-1.2,3.2) node {\tiny$\ell_2$};
    \draw (-.5,1.5) node {\tiny$\ell_3$};
    \draw (-.5,-2.5) node {\tiny$\ell^*_3$};
    \draw (1.2,3.2) node {\tiny$\ell_4$};
    \draw (-2.5,-1.8) node {\tiny$\ell^*_1$};
    \draw (2.5,-1.8) node {\tiny$\ell^*_5$};
    \draw (-1.5,0) node {\tiny$\ell^*_2$};
    \draw (1.5,0) node {\tiny$\ell^*_4$};
    \draw[thick,bend right,->](-3.2,1.3)to(-2.8,2);
    \draw (-2.5,1.5) node {\tiny$\za$};
    \draw[thick,bend right,->](2.8,2)to(3.2,1.3);
    \draw (2.5,1.5) node {\tiny$\delta$};
    \draw[thick,bend right,->](-.6,3.2)to(0,3.1);
    \draw (-.45,2.6) node {\tiny$\zb$};
    \draw[thick,bend right,->](0,3.1)to(.6,3.2);
    \draw (.4,2.6) node {\tiny$\zg$};
    \draw (0,-5.5) node {(A)$~~~~(S,M,\zD)$ corresponding to  $A(\zD)$};
\end{tikzpicture}
\begin{tikzpicture}[scale=0.45]
	\draw (0,0) circle (4cm);
    \draw (0,-4) coordinate (V1) node {$\gpoint$};
    \draw (0,4) coordinate (V6) node {$\gpoint$};
    \draw (-4,0) coordinate (V9) node {$\rpoint$};
    \draw (4,0) coordinate (V3) node {$\rpoint$};
    \draw (3.5,-2) coordinate (V2) node {$\gpoint$};
    \draw (-3.5,-2) coordinate (V10) node {$\gpoint$};
    \draw (-1.8,0) node {\tiny$\overline{\ell_1}$};
    \draw (1.8,0) node {\tiny$\overline{\ell_5}$};
    \draw (-.5,0) node {\tiny$\overline{\ell_3}$};
    \draw[dark-green,thick](V10)to(V6);
    \draw[dark-green,thick](V2)to(V6);
    \draw[thick,bend right,->](-.9,2.5)to(0,2.3);
    \draw (-.7,1.5) node {\tiny$[\za\zb]$};
    \draw[thick,bend right,->](0,2.3)to(.9,2.5);
    \draw (.7,1.5) node {\tiny$[\zg\delta]$};
    \draw[dark-green,thick](V1)to(V6);
    \draw (-2,-3.5) coordinate (V70) node {$\rpoint$};
    \draw (2,-3.5) coordinate (V50) node {$\rpoint$};
    \draw (0,-5.5) node {(B) $~~~(S_{L},M_L,\zD_L)$ corresponding to $A(\zD_L)$};
\end{tikzpicture}
\begin{tikzpicture}[scale=0.45]
	\draw (0,0) circle (4cm);
    \draw (0,-4) coordinate (V1) node {$\rpoint$};
    \draw (0,4) coordinate (V6) node {$\gpoint$};
    \draw (-4,0) coordinate (V9) node {$\rpoint$};
    \draw (4,0) coordinate (V3) node {$\rpoint$};
    \draw (3.5,-2) coordinate (V2) node {$\gpoint$};
    \draw (-3.5,-2) coordinate (V10) node {$\gpoint$};
    \draw (-2.2,-2.6) node {\tiny$\overline{\ell^*_2}$};
    \draw (2.2,-2.6) node {\tiny$\overline{\ell^*_4}$};
%

    \draw[red,thick](V1)to(V9);
    \draw[red,thick](V1)to(V3);

    \draw[thick,bend right,->](.5,-3.5)to(-.5,-3.5);
    \draw (0,-2.5) node {\tiny$(\zb\zg)^{op}$};
        \draw[red,thick](V1)to(V3);
        \draw[red,thick](V1)to(V9);
    \draw (0,-5.5) node {(C) $(S_{\zD^{*}\setminus L^{*}},M_{\zD^{*}\setminus L^{*}},L^{*})$};
    \draw (.3,-6.5) node {corresponding to $(A(L))^!$};
        \draw (6,0) node {};
\end{tikzpicture}
\begin{tikzpicture}[scale=0.45]
    \draw (-6,-.5) node {};
	\draw (0,0) circle (4cm);
    \draw (0,-4) coordinate (V1) node {$\rpoint$};
    \draw (0,4) coordinate (V6) node {$\gpoint$};
    \draw (-4,0) coordinate (V9) node {$\rpoint$};
    \draw (4,0) coordinate (V3) node {$\rpoint$};
    \draw (3.5,-2) coordinate (V2) node {$\gpoint$};
    \draw (-3.5,-2) coordinate (V10) node {$\gpoint$};
    \draw (-1.8,0) node {\tiny$\ell_2$};
    \draw (1.8,0) node {\tiny$\ell_4$};

    \draw[dark-green,thick](V10)to(V6);
    \draw[dark-green,thick](V2)to(V6);


    \draw[thick,bend right,->](-.5,3.2)to(.5,3.2);
    \draw (0,2.5) node {\tiny$\zb\zg$};
    \draw (0,-5.5) node {(D) $(S_{\zD^{*}\setminus L^{*}},M_{\zD^{*}\setminus L^{*}},L)$};
    \draw (.3,-6.5) node {corresponding to $A(L)$};
       \draw (-8,0) node {};
\end{tikzpicture}
\end{center}
\begin{center}
\caption{Cut surfaces induced by  $L=\{\ell_2,\ell_4\}$ in Example~\ref{example:recollement-cut-surface}. The surface in (B) is obtained by cutting the surface in (A) along $L$ and the surface in (C) is obtained by cutting the surface in (A) along $\{\ell_{1}^{*}, \ell_{3}^{*},\ell_{5}^{*}\}$.}\label{figure:recollement-cut-surface}.
\end{center}
\end{figure}
\end{Ex}

\begin{Rem}\label{remark:ribbongraph}
\begin{enumerate}[\rm (1)]
\item
     Given a subset $L$ of arcs of $\zD$, we can consider $L$ as what is called a ribbon graph which is graded with a grading  induced by the grading of $\zD$. From $L$ we can construct a graded marked surface dissection $(S(L), M(L), L)$, see for example \cite{HKK17, LP20}.

\item
It is straightforward to see that the graded algebra $A(L)$ associated to the graded sub-ribbon graph $L$ of $\zD$ is isomorphic to $eAe$, where $e \in A$ is the idempotent corresponding to $L$. Hence by Proposition~\ref{Prop:Koszul} and Theorem~\ref{main theorem cutting surfaces}, $A(L)$ is isomorphic to $(A({{\zD^*_{\zD^{*}\setminus L^{*}}}}))^!$.

\item

It is easy to see  that $L$ gives rise to an admissible dissection of   $(S_{\zD^{*}\setminus L^{*}},M_{\zD^{*}\setminus L^{*}})$ and the algebra associated to the surface dissection $L$ is exactly the algebra $A(L)$ associated to the graded ribbon graph $L$ in (2).
\end{enumerate}

\end{Rem}

Remark~\ref{remark:ribbongraph} implies the following.

\begin{Cor}
The marked surface $(S(L), M(L))$  obtained from the subribbon graph $L$ of $\zD$ is homeomorphic to the marked surface  $(S_{\zD^{*}\setminus L^{*}},M_{\zD^{*}\setminus L^{*}})$.
\end{Cor}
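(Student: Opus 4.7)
The plan is to invoke the bijection between homeomorphism classes of marked surfaces equipped with an admissible dissection and isomorphism classes of (graded) gentle algebras recorded in Subsection \ref{subsection:marked surfaces} (as established in \cite{OPS18,PPP18}), and to exhibit the set $L$ as an admissible dissection of both marked surfaces giving rise to the same gentle algebra.

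First I would verify that $L$ is an admissible $\gpoint$-dissection of $(S(L), M(L))$. This is immediate from Remark~\ref{remark:ribbongraph}(1): the marked surface $(S(L), M(L), L)$ is precisely the graded marked surface obtained from the ribbon subgraph $L$ of $\zD$, and by construction the arcs of $L$ cut $S(L)$ into polygons each containing exactly one $\rpoint$-point. Next I would invoke Remark~\ref{remark:ribbongraph}(3) to conclude that $L$ is also an admissible $\gpoint$-dissection of the cut surface $(S_{\zD^*\setminus L^*}, M_{\zD^*\setminus L^*})$.

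Now I would compare the gentle algebras attached to these two admissible dissections. On the one hand, by definition $A(L)$ is the gentle algebra associated to the dissection of $(S(L),M(L))$ by $L$. On the other hand, by Remark~\ref{remark:ribbongraph}(3) the gentle algebra associated to the dissection of $(S_{\zD^*\setminus L^*},M_{\zD^*\setminus L^*})$ by $L$ is also $A(L)$. Hence both marked surfaces, equipped with the admissible dissection $L$, correspond to the same isomorphism class of gentle algebras. By the bijection recalled in Subsection~\ref{subsection:marked surfaces}, the two graded marked surfaces with dissection $L$ are homeomorphic via a homeomorphism sending arcs of $L$ to arcs of $L$. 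Forgetting the dissection, this yields the required homeomorphism $(S(L),M(L)) \cong (S_{\zD^*\setminus L^*}, M_{\zD^*\setminus L^*})$.

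The main point to be careful about is really just the bookkeeping in step one, namely, checking that the cut surface $(S_{\zD^*\setminus L^*},M_{\zD^*\setminus L^*})$ contains no interior marked points so that $L$ gives a genuine admissible dissection in the sense of Definition~\ref{definition: dissection}, and that every polygon cut out by $L$ contains exactly one $\rpoint$-point. This is encoded in Remark~\ref{remark:ribbongraph}(3), and once established the rest of the argument is purely formal via the classification of gentle algebras by marked surfaces with admissible dissections.
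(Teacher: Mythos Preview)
Your proof is correct and follows essentially the same approach as the paper: the paper simply states that the corollary is implied by Remark~\ref{remark:ribbongraph}, and your argument spells out precisely how, by observing that $L$ is an admissible dissection of both surfaces yielding the same gentle algebra $A(L)$ and then invoking the bijection of \cite{OPS18,PPP18}.
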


 We now give the precise definitions of the terms above and then devote the rest of the Section to proving the stated results. We begin by defining the surface obtained from cutting arcs on a marked surface with a graded admissible dissection.

\begin{Def}\cite{APS19,CS20}\label{definition:cut surface1}
Let $\ell$ be an $\gpoint$-arc without self-intersection in the interior of a marked surface $(S,M)$. Let $p$ and $q$ be the endpoints of $\ell$ which may coincide.
We define the \emph{cut surface} $(S_\ell , M_\ell)$ to be the marked surface obtained from $(S,M)$ by cutting  and  contracting along $\ell$. In particular, we set $M_\ell^{{\rpoint}}=M^{\rpoint}, P_\ell^{{\rpoint}}=P^{\rpoint}$, and $M_\ell^{{\gpoint}}\cup P_\ell^{{\gpoint}}=((M^{\gpoint}\cup P^{\gpoint})\setminus \{p,q\}) \cup \{\overline{p},\overline{q}\}$, where $\overline{p}$ and $\overline{q}$ are new $\gpoint$-marked points (on the boundary or in the interior) of $S_\ell$ obtained from cutting and contracting $\ell$.

We similarly define the cut surface $(S_{\ell^*} , M_{\ell^*})$ of $(S,M)$ by cutting along an $\rpoint$-arc $\ell^*$ which has no self-intersection in the interior of the surface.
\end{Def}

 Let $\ell'$ be an $\gpoint$-arc on $(S,M)$, denote by $\overline{\ell'}$ the \emph{induced arc} of $\ell'$ on $(S_\ell, M_\ell)$, see Figure \ref{figure:cut surface} for an example. Note that if $\ell'$ intersects with $\ell$, then $\overline{\ell'}$ is empty.

\begin{Ex}\label{example:cut surface}
Figure \ref{figure:cut surface} gives an example of a cut surface and induced arcs. Namely, we cut an annulus along an arc $\ell$ connecting the two boundary components and then contract along the cut. From this we see that the cut surface  is a disk.

\begin{figure}[ht]
\begin{center}
\begin{tikzpicture}[scale=0.4]
\begin{scope}
	\draw (0,0) circle (1cm);
	\clip[draw] (0,0) circle (1cm);
	\foreach \x in {-4,-3.5,-3,-2.5,-2,-1.5,-1,-0.5,0,0.5,1,1.5,2,2.5,3,3.5,4}	\draw[xshift=\x cm]  (-5,5)--(5,-5);
\end{scope}
	\draw (0,0) circle (4cm);
    \draw (0,-1) node {$\gpoint$};
    \draw (0,-4) node {$\gpoint$};
    \draw (0,4) node {$\gpoint$};
    \draw (-4,0) node {$\gpoint$};
    \draw (4,0) node {$\gpoint$};

    \draw (0,1) node {$\rpoint$};
    \draw (-3.5,2) node {$\rpoint$};
    \draw (3.5,2) node {$\rpoint$};
    \draw (-2,-3.5) node {$\rpoint$};
    \draw (2,-3.5) node {$\rpoint$};

    \draw (0.4,-2.5) node {\tiny$\ell$};
    \draw[blue,thick](0,-4)to(0,-1);
%

    \draw (-.4,-4.5) node {\tiny$p$};
    \draw (-.4,-1.5) node {\tiny$q$};
    \draw (-1.8,1) node {\tiny$\ell'$};
    \draw (1.8,1) node {\tiny$\ell''$};
    \draw[dark-green,thick](0,-1)..controls (-1.8,-1) and (-1.8,1.3)..(0,4);
    \draw[dark-green,thick](0,-1)..controls (1.8,-1) and (1.8,1.3)..(0,4);
    \draw (0,-5.5) node {\tiny$(S,M)$};
\end{tikzpicture}
\begin{tikzpicture}[scale=0.4]
\begin{scope}
	\draw (0,0) circle (1cm);
	\clip[draw] (0,0) circle (1cm);
	\foreach \x in {-4,-3.5,-3,-2.5,-2,-1.5,-1,-0.5,0,0.5,1,1.5,2,2.5,3,3.5,4}	\draw[xshift=\x cm]  (-5,5)--(5,-5);
\end{scope}
	\draw (0,0) circle (4cm);
    \draw[dark-green,thick,fill=white] (-.185,-.97) circle (0.15);
    \draw[dark-green,thick,fill=white] (.185,-.97) circle (0.15);
    \draw[dark-green,thick,fill=white] (-.185,-3.97) circle (0.15);
    \draw[dark-green,thick,fill=white] (.185,-3.97) circle (0.15);

    \draw (0,4) node {$\gpoint$};
    \draw (-4,0) node {$\gpoint$};
    \draw (4,0) node {$\gpoint$};

    \draw (0,1) node {$\rpoint$};
    \draw (-3.5,2) node {$\rpoint$};
    \draw (3.5,2) node {$\rpoint$};
    \draw (-2,-3.5) node {$\rpoint$};
    \draw (2,-3.5) node {$\rpoint$};

    \draw[blue,thick,dashed](-.2,-4)to(-.2,-1);
    \draw[blue,thick,dashed](.2,-4)to(.2,-1);

    \draw (-.5,-4.5) node {\tiny$p$};
    \draw (-.5,-1.4) node {\tiny$q$};
    \draw (.6,-4.4) node {\tiny$p$};
    \draw (.6,-1.3) node {\tiny$q$};

    \draw (-6.4,-5.8) node {};
\end{tikzpicture}
\begin{tikzpicture}[scale=0.4]
    \draw (-6,-.5) node {};
	\draw (0,0) circle (4cm);
    \draw (0,-4) node {$\rpoint$};
    \draw (0,4) node {$\gpoint$};
    \draw (-4,0) node {$\rpoint$};
    \draw (4,0) node {$\rpoint$};
    \draw (-2,3.5) node {$\rpoint$};
    \draw (2,3.5) node {$\rpoint$};

    \draw (3.5,-2) node {$\gpoint$};
    \draw (3.5,2) node {$\gpoint$};
    \draw (-3.5,-2) node {$\gpoint$};
    \draw (-3.5,2) node {$\gpoint$};

    \draw (-4.5,-2) node {\tiny$\overline{p}$};
    \draw (4.5,-2) node {\tiny$\overline{q}$};
%
%
%
    \draw[dark-green,thick](-3.5,-2)to(0,4);
    \draw[dark-green,thick](3.5,-2)to(0,4);
    \draw (-2.5,1) node {\tiny$\overline{\ell'}$};
    \draw (2.5,1) node {\tiny$\overline{\ell''}$};
    \draw (0,-5.5) node {\tiny$(S_\zg,M_\zg)$};
\end{tikzpicture}
\end{center}
\begin{center}
\caption{An example of a cut surface, where the labels $\overline{p}$ and $\overline{q}$ are allowed to be exchanged. The arcs $\overline{\ell'}$ and $\overline{\ell''}$ are respectively the induced arcs of $\ell'$ and $\ell''$.}\label{figure:cut surface}
\end{center}
\end{figure}
\end{Ex}

Note that the cut surface may not be connected, even if the original surface is. Recall that by our convention if the cut surface has a component corresponding to  a sphere with a unique $\gpoint$-puncture and a unique $\rpoint$-puncture or an unpunctured disk with only one $\gpoint$-marked point and one $\rpoint$-marked point on the boundary, then this corresponds to the zero gentle algebra and  we  can ignore this component.

\begin{Prop}\label{lemma:induce arrows}
Let $(S,M,\zD, G)$ be a  graded marked surface with associated  graded gentle algebra $A(\zD)$. Let $\ell$ be an arc in $\zD$ and denote  the associated idempotent in $A$ also by $\ell$. Denote by $\zD_\ell=\{\overline{\ell_i}, \ell_i\in \zD\}$ the set of arcs on the cut surface $(S_\ell,M_\ell)$  induced from the arcs in $\zD$.
\begin{enumerate}[\rm(1)]
\item
  Then $\zD_\ell$ is an admissible dissection on $(S_\ell,M_\ell)$.
\item The algebra $A(\zD_\ell)$ associated to $\zD_\ell$ is isomorphic to $A(\zD)_{\ell}$ as ungraded algebras, where $A(\zD)_{\ell}$ is the (graded) algebra introduced in Definition \ref{definition:left algebra}.
\end{enumerate}
\end{Prop}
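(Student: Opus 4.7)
The plan is to handle parts (1) and (2) separately, relying in (2) on the explicit description of $A(\zD)_\ell$ given in Example~\ref{Ex:gentleA_e}.

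For (1), I would verify directly the defining properties of an admissible dissection from Definition~\ref{definition: dissection}. The arcs in $\zD \setminus \{\ell\}$ are pairwise non-intersecting in the interior of $S$ and do not intersect $\ell$ in the interior (since $\zD$ is admissible), so their images $\overline{\ell_i}$ in $S_\ell$ remain pairwise non-intersecting. For the polygon condition, each polygon of $\zD$ not adjacent to $\ell$ is unchanged, while each polygon of $\zD$ containing $\ell$ as an edge loses that edge under cutting and has its two corners at the endpoints of $\ell$ identified under contracting, giving a polygon of $\zD_\ell$ with one fewer edge. Since $M^\rpoint_\ell = M^\rpoint$ and $P^\rpoint_\ell = P^\rpoint$ by Definition~\ref{definition:cut surface1}, each resulting polygon still contains exactly one $\rpoint$-point.

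For (2), the strategy is to construct an explicit isomorphism $A(\zD_\ell) \xrightarrow{\simeq} A(\zD)_\ell$ by matching vertices, arrows and relations. Vertices correspond on both sides to arcs in $\zD \setminus \{\ell\}$. To match the arrows, note that in $A(\zD_\ell)$ they are the minimal oriented intersections in $\zD_\ell$. At any marked point $r \notin \{\overline{p},\overline{q}\}$ the cyclic order of arcs around $r$ is unaffected by the cut, so the minimal oriented intersections at $r$ biject with those in $\zD$ at the same point, matching the arrows of type $[\alpha]$ in Example~\ref{Ex:gentleA_e}. At $\overline{p}$ (and symmetrically $\overline{q}$), the cyclic order is obtained by concatenating the arcs at $p$ lying on one side of $\ell$ with the arcs at $q$ lying on that same side, each in their original cyclic order; the minimal oriented intersections at $\overline{p}$ then split into those arising internally within the $p$-block or the $q$-block (again matching $[\alpha]$-type arrows) and the two "transition" intersections joining an arc originally at $p$ to one originally at $q$, which correspond precisely to the $[\alpha_1\alpha_2]$-type generators of $A(\zD)_\ell$ coming from compositions $\alpha_1\alpha_2 \in I$ with $\ell$ as the middle vertex.

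Once the arrow bijection is in place, matching the relations is a case analysis using the rule that a relation in $A(\zD_\ell)$ is a composition "through an arc from one endpoint to the other," applied to each of the arrow types, and comparing with the explicit list of generators of $I_\ell$ in Example~\ref{Ex:gentleA_e}.

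The main obstacle I anticipate is the case in which $\ell$ is a loop, that is $p=q$. In this situation $A(\zD)_\ell$ acquires length-three generators $[\alpha_1\alpha_2\alpha_3]$ with $\alpha_1\alpha_2,\alpha_2\alpha_3\in I$, and the cut-and-contract operation creates new marked points whose cyclic orders combine contributions from several sides of the loop in a more delicate way. I would treat this case by drawing explicit local pictures around $p$ distinguishing the position of the two ends of $\ell$ (both on the boundary, one on the boundary and one in the interior, or both in the interior) and verifying that the transition intersections at $\overline{p}$ and $\overline{q}$ biject with the length-2 and length-3 composition generators in $A(\zD)_\ell$ through $\ell$.
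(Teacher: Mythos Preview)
Your approach to part~(2) is essentially the paper's: build an explicit bijection between the quivers of $A(\zD_\ell)$ and $A(\zD)_\ell$ using Example~\ref{Ex:gentleA_e}, via a case analysis on whether a minimal oriented intersection of $\zD_\ell$ lies at one of the new marked points $\overline{p},\overline{q}$ or not. For part~(1) the paper simply invokes \cite{APS19}; your direct polygon-tracking argument is a reasonable alternative.

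There is, however, a misidentification in your treatment of the length-three generators $[\za_1\za_2\za_3]$. Such a generator requires a loop arrow $\za_2:\ell\to\ell$ in $Q$ with $\za_1\za_2,\za_2\za_3\in I$. If $\ell$ is a loop \emph{arc} at $p$ with half-edges $h_1,h_2$ and $\za_2$ is the minimal oriented intersection $h_1\to h_2$ at $p$, then $\za_1\za_2\in I$ forces the target half-edge of $\za_1$ to be $h_2$, so $\ell_i$ would have to be the immediate predecessor of $h_2$ in the cyclic order at $p$; but that predecessor is $h_1$, contradicting $\ell_i\neq\ell$. Hence the case $p=q$ never produces $[\za_1\za_2\za_3]$-type arrows, only types $[\za]$ and $[\za_1\za_2]$. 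The configuration that actually yields $[\za_1\za_2\za_3]$ is when one endpoint $q$ of $\ell$ is a $\gpoint$-\emph{puncture} with $\ell$ the unique arc of $\zD$ incident to $q$: the full turn around $q$ gives $\za_2:\ell\to\ell$, and since $\za_1,\za_3$ sit at the other endpoint $p$, both $\za_1\za_2$ and $\za_2\za_3$ lie in $I$. This is precisely the paper's case~(b). Your local analysis at $\overline{p},\overline{q}$ should therefore single out this puncture configuration rather than the loop-arc one; once you do, the rest of your plan goes through as in the paper.
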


With the notation of the above proposition we make the following definitions.

\begin{Def}
We call $\zD_\ell$ the \emph{induced admissible $\gpoint$-dissection of $\zD$} on
 $(S_\ell,M_\ell)$ and we call
 $(S_{\ell}, M_{\ell},\zD_\ell,G_\ell)$ the \emph{graded cut surface} of
 $(S, M, \zD,G)$ at $\ell$, where $G_\ell$ is the grading on $\zD_\ell$ such that $A(\zD_\ell)\cong A(\zD)_{\ell}$ as graded algebras.
\end{Def}

 \begin{proof}[Proof of Proposition~\ref{lemma:induce arrows}]
(1) Follows from \cite{APS19}.

(2)
Let $(Q, I)$ be such that $A(\zD)=kQ/I$ and  let  $A(\zD_\ell)=kQ'/I'$.
Since $A(\zD)$ is a gentle algebra, $A(\zD)_\ell=kQ_\ell/I_\ell$ is as defined in Example~\ref{Ex:gentleA_e}.

We now show that $kQ_\ell/I_\ell\cong kQ'/I'$.
It follows from (1) that  there is a canonical bijection from the vertex set of $Q_\ell$ to the vertex set of $Q'$, which sends an arc $\ell_i\in \zD\backslash \{ \ell\}$ to an arc $\overline{\ell_i}\in\zD_\ell$.

Under this bijection, we will further show that the set of arrows of $Q_\ell$ coincides with the set of  arrows of $Q'$. For this, let $\overline{\za}: \overline{\ell_i} \rightarrow \overline{\ell_j}$ be an arrow of $Q'$ corresponding to an oriented intersection at a common endpoint $\overline{\varrho}$ of $\overline{\ell_i}$ and $\overline{\ell_j}$. We claim that $\overline{\za}$ must correspond to an arrow of the form as described in Example~\ref{Ex:gentleA_e}~(2).

Suppose $\overline{\varrho}\notin \{\overline{p}, \overline{q}\}$ where if $\ell$ has endpoints $p$ and $q$, $\overline{p}$ and $ \overline{q}$ are the induced marked points in $(S_\ell, M_\ell)$. Then $\overline{\varrho}$ can be lifted to a marked point on $S$, which we denoted by $\varrho$. Note that since the local configuration of $\varrho$ does not change when cutting the surface, $\overline{\za}$ can be lifted to a minimal oriented intersection in $(S,M,\zD)$ from $\ell_i$ to $\ell_j$ at $\varrho$, which gives rise to an arrow $\za:\ell_i\rightarrow\ell_j$ in $Q$. Thus in this case, $\overline{\za}$ corresponds to an arrow $[\za]$ in $Q_\ell$, which belongs to the first subset as stated in Example~\ref{Ex:gentleA_e}~(2).

If $\overline{\varrho}\in \{\overline{p}, \overline{q}\}$, then both $\ell_i$ and $\ell_j$ share an endpoint with $\ell$, and up to permutating the endpoints $p$ and $q$ of $\ell$, we have the following three possibilities of lifting $\overline{\ell_i}$ and $\overline{\ell_j}$ to arcs $\ell_i$ and $\ell_j$ on $(S,M)$.

(a) Both $\ell_i$ and $\ell_j$ share the same endpoint (for example $p$) with $\ell$ such that $\ell$ is not between $\ell_i$ and $\ell_j$, see the following leftmost and middle pictures. In this case, note that the minimal oriented intersection $\overline{\za}$ is induced from a minimal oriented intersection $\za$ on $(S,M,\zD)$, which gives rise to an arrow of $Q_\ell$. Thus in this case, $\overline{\za}$ also corresponds to an arrow $[\za]$ of $Q_\ell$ in the first subset as stated in Example~\ref{Ex:gentleA_e}~(2).
\[
 \begin{tikzpicture}[scale=0.25]
\draw (0,0) node {$\gpoint$};
\draw (1,1.5) node {$\za$};

\draw[->,thick,bend right](.8,0.5)to(0,1);
\draw[thick](0,0)--(0,6);
\draw[blue,thick](0,0)to(-7,5);
\draw[thick](0,0)to(7,5);
\draw (.7,4) node {$\ell_j$};
\draw (0,-1) node {$p$};
\draw (-5,2.2) node {$\ell$};
\draw (5,2) node {$\ell_i$};
\draw (10,2) node {};
\end{tikzpicture}
 \begin{tikzpicture}[scale=0.25]
\draw (0,0) node {$\gpoint$};
\draw (-1,1.5) node {$\za$};

\draw[<-,thick,bend left](-.8,0.5)to(0,1);

\draw[thick](0,0)--(0,6);
\draw[thick](0,0)to(-7,5);
\draw[blue,thick](0,0)to(7,5);
\draw (.6,4) node {$\ell_i$};
\draw (0,-1) node {$p$};
\draw (-5,2) node {$\ell_j$};
\draw (4.8,2) node {$\ell$};
\draw (10,2) node {};
\end{tikzpicture}
 \begin{tikzpicture}[scale=0.25]
\draw (0,0) node {$\gpoint$};
\draw (0,1.9) node {$\overline{\za}$};

\draw[<-,thick,bend left](-1,0.9)to(1,.9);

\draw[thick](0,0)to(-5,5);
\draw[thick](0,0)to(5,5);
\draw (-3.3,2) node {$\overline{\ell_j}$};
\draw (0,-1) node {$\overline{\varrho}$};
\draw (3.2,2) node {$\overline{\ell_i}$};
\end{tikzpicture}
\]

(b) Both $\ell_i$ and $\ell_j$ share the same endpoint $p$ with $\ell$ such that $\ell$ is between $\ell_i$ and $\ell_j$. Note that in this case, since $\overline{\za}$ arises from the intersection $\overline{\varrho}$, the other endpoint $q$ of $\ell$  must be a $\gpoint$-puncture with $\ell$  the unique arc in $\zD$ adjacent to it. That is, the local configuration must be as in the left picture of the following figure, with arrows $\za_1: \ell_i \rightarrow \ell$, $\za_3: \ell \rightarrow \ell_j$ and $\za_2: \ell \rightarrow \ell$ in $Q$ arising from minimal oriented intersections at $p$ and $q$ respectively.
Then $\overline{\za}$ corresponds to an arrow $[\za_1\za_2\za_3]$ of $Q_\ell$ as in the third subset of Example~\ref{Ex:gentleA_e}~(2).

\[
 \begin{tikzpicture}[scale=0.25]
\draw (0,6) node {$\gpoint$};
\draw (0,0) node {$\gpoint$};

\draw (1,1.5) node {$\za_1$};

\draw (-1,1.5) node {$\za_3$};
%
\draw[->,thick,bend right](.8,0.5)to(0,1);
\draw[<-,thick,bend left](-.8,0.5)to(0,1);

\draw [thick] (0,6) circle (.7cm);
\draw[->,thick,bend right](-.2,5.5)to(0,5.3);

\draw[blue,thick](0,0)--(0,6);
\draw[thick](0,0)to(-7,5);
\draw[thick](0,0)to(7,5);
\draw (.6,4) node {$\ell$};
\draw (0,7.5) node {$\za_2$};
\draw (-1.5,6) node {$q$};
\draw (0,-1) node {$p$};
\draw (-5,2) node {$\ell_j$};
\draw (5,2) node {$\ell_i$};
\draw (12,2) node {};
\end{tikzpicture}
 \begin{tikzpicture}[scale=0.25]
\draw (0,0) node {$\gpoint$};

\draw (0,3) node {$[\za_1\za_2\za_3]$};

\draw[-,thick,bend right](.8,0.5)to(0,1);
\draw[<-,thick,bend left](-.8,0.5)to(0,1);

\draw[thick](0,0)to(-7,5);
\draw[thick](0,0)to(7,5);
\draw (0,-1) node {$\overline{\varrho}$};
\draw (-5,2) node {$\overline{\ell_j}$};
\draw (5,2) node {$\overline{\ell_i}$};
\end{tikzpicture}
\]

(c) The arcs $\ell_i$ and $\ell_j$ share different endpoints with $\ell$, see the  left picture below, where $\za_1: \ell_i\rightarrow \ell$ and $\za_2: \ell\rightarrow \ell_j$ are arrows in $Q$ arising from minimal oriented intersections of $q$ and $p$ respectively. In this case we have $[\za_1\za_2]$ as in the right picture, which belongs to the second subset in Example~\ref{Ex:gentleA_e}~(2).
\[
\begin{tikzpicture}[scale=0.25]
\draw (-4,0) node {$\gpoint$};
\draw (4,0) node {$\gpoint$};
\draw[blue,thick](-4,0)--(4,0);
\draw[thick](-4,0)to(-7,5);
\draw[thick](4,0)to(7,5);

\draw[<-,thick,bend left](-4.5,0.9)to(-3.1,0);
\draw[->,thick,bend right](4.5,0.9)to(3.1,0);

\draw (2.9,1.2) node {$\za_1$};
\draw (-2.8,1.2) node {$\za_2$};

\draw (0,-1) node {$\ell$};
\draw (-3.7,-1) node {$p$};
\draw (4,-1) node {$q$};
\draw (-6.2,2) node {$\ell_j$};
\draw (6.2,2) node {$\ell_i$};
\end{tikzpicture}
\quad\quad
\begin{tikzpicture}[scale=0.25]
\draw (0,0) node {$\gpoint$};

\draw (0,2) node {$[\za_1\za_2]$};


\draw[-,thick,bend right](.8,0.5)to(0,1);
\draw[<-,thick,bend left](-.8,0.5)to(0,1);


\draw[thick](0,0)to(-7,5);
\draw[thick](0,0)to(7,5);
\draw (0,-1) node {$\overline{\varrho}$};
\draw (-5,2) node {$\overline{\ell_j}$};
\draw (5,2) node {$\overline{\ell_i}$};
\end{tikzpicture}
\]

We have thus established  a bijection between the arrow set of $Q_\ell$ and the arrow set of $Q'$, which induces an isomorphism of  the quivers $Q_\ell$ and $Q'$. Furthermore one can  check using the definition of a gentle algebra  that the ideal $I'$ of the algebra arising from the admissible dissection $\zD_\ell$ is generated exactly by  the relations in $I_\ell$ described in Example~\ref{Ex:gentleA_e}~(4).
\end{proof}

\begin{Lem}\label{Lem:ordered cut}
Let $(S,M,\zD,G)$ be a graded marked surface. Let $\ell$ and $\ell'$ be two arcs in $\zD$. Then there is a homeomorphism between $((S_\ell)_{\ell'},(M_\ell)_{\ell'})$ and $((S_{\ell'})_\ell,(M_{\ell'})_\ell)$, such that $((\zD_\ell)_{\ell'}, (G_\ell)_{\ell'})$ and $((\zD_{\ell'})_{\ell}, (G_{\ell'})_{\ell})$ coincide under this homeomorphism.
\end{Lem}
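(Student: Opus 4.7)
The plan is to reduce the statement to the algebraic side via Proposition \ref{lemma:induce arrows}, and then apply the commutativity result for iterated idempotent reductions, Proposition \ref{Prop:e'e''}. On the level of underlying (ungraded) marked surfaces the desired homeomorphism $((S_\ell)_{\ell'},(M_\ell)_{\ell'}) \cong ((S_{\ell'})_\ell,(M_{\ell'})_\ell)$ is essentially immediate: the arcs $\ell$ and $\ell'$ lie in the admissible dissection $\zD$ and therefore do not intersect in the interior of $S$. Since the cut-and-contract operation of Definition \ref{definition:cut surface1} is local to a tubular neighbourhood of the arc, two such operations along disjoint arcs commute up to a canonical homeomorphism of marked surfaces. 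Under this homeomorphism the induced admissible dissections $(\zD_\ell)_{\ell'}$ and $(\zD_{\ell'})_\ell$ are the same set, consisting of the arcs of $\zD \setminus \{\ell,\ell'\}$ together with the evident identification of endpoints originally lying on $\ell$ or $\ell'$.

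It then remains to show that the gradings $(G_\ell)_{\ell'}$ and $(G_{\ell'})_\ell$ agree. Write $A := A(\zD)$ and let $e$ be the idempotent corresponding to $\{\ell,\ell'\}$, so that $e = e_\ell + e_{\ell'}$. By Proposition \ref{lemma:induce arrows}(2) (which we may apply in its graded version, as this is how $G_\ell$ was defined), we have an isomorphism $A(\zD_\ell) \cong A_{e_\ell}$ of graded algebras. Applying the same proposition once more to the cut along $\overline{\ell'}$ inside the graded cut surface $(S_\ell, M_\ell, \zD_\ell, G_\ell)$ yields
\[
A\bigl((\zD_\ell)_{\ell'}\bigr) \;\cong\; A(\zD_\ell)_{e_{\ell'}} \;\cong\; (A_{e_\ell})_{e_{\ell'}}.
\]
Symmetrically, $A((\zD_{\ell'})_\ell) \cong (A_{e_{\ell'}})_{e_\ell}$. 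By Proposition \ref{Prop:e'e''} both graded algebras are isomorphic to $A_e$, and hence to each other.

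Finally, the bijection between isomorphism classes of graded gentle algebras and homeomorphism classes of graded marked surfaces with admissible dissections (recalled in Subsection \ref{subsection:marked surfaces}) transports this graded algebra isomorphism back to the statement that the two graded admissible dissections $((\zD_\ell)_{\ell'},(G_\ell)_{\ell'})$ and $((\zD_{\ell'})_\ell,(G_{\ell'})_\ell)$ coincide under the homeomorphism produced in the first step.

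The main point that needs care is the compatibility between the topological homeomorphism obtained in the first paragraph and the algebra isomorphism extracted in the second: one must check that both identifications act in the same way on arcs and on minimal oriented intersections, so that a single homeomorphism realises both the identification of dissections and the equality of gradings. This reduces to a local verification, case by case, against the three local pictures (a), (b), (c) treated in the proof of Proposition \ref{lemma:induce arrows}, applied now at each of the endpoints of $\ell$ and $\ell'$; the arguments in each case are the same as in the single-cut situation, and the commutativity is then built into Proposition \ref{Prop:e'e''}.
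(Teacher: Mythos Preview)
Your proof is correct and uses the same key ingredients as the paper: Proposition~\ref{lemma:induce arrows} (applied twice) to identify the iterated cut algebras with $(A_{e_\ell})_{e_{\ell'}}$ and $(A_{e_{\ell'}})_{e_\ell}$, Proposition~\ref{Prop:e'e''} to identify both with $A_e$, and the bijection between graded gentle algebras and graded marked surfaces with admissible dissections to transport this back to geometry.

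The one structural difference is that you proceed in two steps---first a direct topological argument for the homeomorphism of underlying marked surfaces, then the algebraic argument for the gradings---which then forces you to verify, in your final paragraph, that the topological and algebraic identifications are compatible. The paper bypasses this entirely: it goes straight to the isomorphism of graded gentle algebras and then invokes the bijection once, obtaining the homeomorphism and the matching of graded dissections simultaneously. This is shorter and avoids the local case-check you flag at the end, so you could simply drop your first and last paragraphs. (Incidentally, the paper cites Proposition~\ref{proposition:red dual} here, but since that proposition is stated after this lemma and its well-definedness relies on the present lemma, your choice to cite the single-arc Proposition~\ref{lemma:induce arrows} twice is the logically cleaner formulation.)
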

\begin{proof}
By Proposition \ref{Prop:e'e''} and Proposition \ref{proposition:red dual}, we have an isomorphism of graded gentle algebras associated to graded admissible $\gpoint$-dissections $((\zD_\ell)_{\ell'}, (G_\ell)_{\ell'})$ and $((\zD_{\ell'})_{\ell}, (G_{\ell'})_{\ell})$. Then the statement follows from the bijection between  marked surfaces with graded admissible dissections up to homeomorphism and  isomorphism classes of graded gentle algebras.
\end{proof}

We now define the \emph{graded cut surface} $(S_L,M_L, \zD_{L}, G_{L})$ of a graded marked surface $(S,M,\zD, G)$ at a subset $L\subset \zD$ to be the graded marked surface obtained by successively cutting $(S,M, \zD,G)$ along the arcs in $L$. Note that by
Lemma \ref{Lem:ordered cut}, the graded cut surface is independent up to homeomorphism of the order in which we cut the arcs in $L$.  Thus  it is well-defined. Similarly, one can cut along arcs  $D^* \subset \zD^*$ to obtain a graded $\rpoint$-surface dissection $\zD^*_{D^*}$ of the graded cut surface $(S_{D^*}, M_{D^*})$.

  The following observation naturally generalizes Proposition \ref{lemma:induce arrows}.

\begin{Prop}\label{proposition:red dual}
Let $(\zD,G)$ be a graded $\gpoint$-admissible dissection on $(S,M)$ with associated  graded gentle algebra $A$. Let $L$ be a subset of $\zD$ corresponding to an idempotent $L\in A$.
 Then we have isomorphism of graded gentle algebras $A(\zD_L)\cong A(\zD)_{L}$.
\end{Prop}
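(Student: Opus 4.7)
The plan is to prove this by induction on $|L|$, leveraging the single-arc case established in Proposition~\ref{lemma:induce arrows}(2) together with the algebraic fact that partial cofibrant resolutions at idempotents compose (Proposition~\ref{Prop:e'e''}) and the geometric fact that successive cuts are order-independent (Lemma~\ref{Lem:ordered cut}).

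The base case $|L|=1$ is exactly Proposition~\ref{lemma:induce arrows}(2) together with the definition of the induced grading $G_{\ell}$ on $\zD_{\ell}$, which is precisely chosen so that $A(\zD_{\ell})\cong A(\zD)_{\ell}$ holds as graded algebras (and hence as graded gentle algebras, since both sides are gentle by the description in Example~\ref{Ex:gentleA_e}).

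For the inductive step, I would write $L=L'\sqcup\{\ell\}$ with $|L'|=n-1$, and assemble the following chain of isomorphisms of graded gentle algebras:
\begin{equation*}
A(\zD_{L})\;=\;A\bigl((\zD_{L'})_{\ell}\bigr)\;\cong\;A(\zD_{L'})_{\ell}\;\cong\;\bigl(A(\zD)_{L'}\bigr)_{\ell}\;\cong\;A(\zD)_{L'+\ell}\;=\;A(\zD)_{L}.
\end{equation*}
Here the first equality uses the definition of the graded cut surface as successive cuts, well-defined up to homeomorphism by Lemma~\ref{Lem:ordered cut}. The first isomorphism is the base case applied to the single arc $\ell$ on the already cut graded marked surface $(S_{L'},M_{L'},\zD_{L'},G_{L'})$, treating $\ell$ as an arc of $\zD_{L'}$ via the identification $L'\subset\zD$ corresponds to the complement of $\zD_{L'}$-arcs remaining after cutting. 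The second isomorphism applies the induction hypothesis $A(\zD_{L'})\cong A(\zD)_{L'}$ and then takes partial resolutions at the same idempotent $\ell$ on both sides. The third isomorphism is Proposition~\ref{Prop:e'e''} applied to the decomposition of the idempotent $L=L'+\ell$ inside $A(\zD)$.

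The main obstacle, such as it is, lies in bookkeeping rather than substance: one must verify that the identification of $\ell$ as an arc of $\zD_{L'}$ matches the idempotent $\ell\in A(\zD_{L'})\cong A(\zD)_{L'}$ appearing in the algebraic chain, and that the graded structures align across each step. This is straightforward: the vertices of $Q(\zD_{L'})$ are by definition the arcs of $\zD_{L'}$, which include $\ell$ (since $\ell\notin L'$); and the induced grading $G_{L'}$ is defined precisely so that the isomorphism $A(\zD_{L'})\cong A(\zD)_{L'}$ is grading-preserving, so the subsequent partial resolution at $\ell$ produces matching graded algebras. Therefore the composite isomorphism $A(\zD_L)\cong A(\zD)_L$ is an isomorphism of graded gentle algebras, completing the induction.
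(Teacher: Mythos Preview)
Your proof is correct and makes explicit precisely the induction that the paper leaves implicit (the paper states the proposition as a ``natural generalisation'' of Proposition~\ref{lemma:induce arrows} and gives no proof). One caveat worth noting: you invoke Lemma~\ref{Lem:ordered cut} to justify well-definedness of $\zD_{L}$, but the paper's proof of that lemma in turn cites Proposition~\ref{proposition:red dual}; this is not a genuine circularity, since Lemma~\ref{Lem:ordered cut} treats only two arcs and therefore needs only the single-arc case (Proposition~\ref{lemma:induce arrows}), but it would be cleaner either to remark on this or to fix an arbitrary ordering of $L$ and deduce order-independence \emph{a posteriori} from your isomorphism (the right-hand side $A(\zD)_{L}$ being manifestly order-independent by Proposition~\ref{Prop:e'e''}).
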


We now prove Theorem~\ref{main theorem cutting surfaces}

\begin{proof}[Proof of Theorem~\ref{main theorem cutting surfaces}]
 The left hand side of the theorem follows from Theorem \ref{Thm:recollement1} and Proposition~\ref{proposition:red dual}. For the right hand side,
 let $(S,M)$ be a marked surface with a graded admissible $\gpoint$-dissection $\zD$.  Denote by $A = A(\zD)$ the graded gentle algebra associated to $\zD$. Then  it follows from Proposition \ref{Prop:Koszul} that  $eAe \simeq ((A^!)_{1-e})^!$ where $e$ is the idempotent in $A$ corresponding to the arcs in $L$. By our construction $ (A^!)_{1-e}  \simeq A(L^*)  \simeq (A(L))^!  $ and thus $A(L) \simeq ((A^!)_{1-e})^!$. Hence   $A(L) \simeq eAe$. Thus by  Theorem \ref{Thm:recollement1}  the right hand side of the recollement is $A(L)$.
\end{proof}

\section{Application to partially wrapped Fukaya categories}

In this section we apply our results to the partially wrapped Fukaya category of a graded surface with stops as defined in \cite{HKK17}. More precisely,   if a graded gentle algebra $A$ with associated graded marked surface $(S,M,\zD)$ is homologically smooth and proper then $\D^{\rm b}(A)$ is triangle equivalent to the partially wrapped Fukaya category $\mathcal{W}(S,M,\eta(\zD))$, see \cite[Proposition 3.4]{HKK17} and \cite[Lemma 3.3]{LP20}, where $\eta(\zD)$ is the line field corresponding to the grading of $\zD$. In this case  we have the following  recollement of partially wrapped Fukaya categories as a consequence of Theorem~\ref{main theorem cutting surfaces}. For the statement of the next result we use Remark~\ref{remark:ribbongraph}.

\begin{Thm}\label{Thm:RecollementFukaya}
Let  $\mathcal{W}(S,M,\eta(\zD))$ be the partially wrapped Fukaya category associated to a graded surface dissection $(S, M, \zD)$  with induced line field $\eta(\zD)$.  Let $L \subset \zD$ be a subset of $\zD$ such that  the graded marked surface $\mathcal{W}(S(L),M(L),\eta(L))$ generated by $L$ (as graded ribbon subgraph of $\zD$) has no marked points in the interior. Then there is a recollement of partially wrapped Fukaya categories

\begin{equation}\label{eq:recollementFukaya}
\xymatrix @C=3pc{
\mathcal{W}(S_L,M_L,\eta(\zD_L)) \ar[r]^{}
& \mathcal{W}(S,M,\eta(\zD)) \ar@/_1.5pc/[l]_{} \ar@/^1.5pc/[l]^{}
 \ar[r]^{}
&\mathcal{W}(S(L),M(L),\eta(L))  \ar@/^-1.5pc/[l]_{} \ar@/^1.5pc/[l]^{}
  } \end{equation}
where the left side corresponds to the graded cut marked surface along $L$.
\end{Thm}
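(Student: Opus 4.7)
The plan is to reduce the statement to the algebraic recollement of derived categories provided by Theorem~\ref{main theorem cutting surfaces} and then restrict it to bounded derived categories via Proposition~\ref{Prop:D^{b}}, transporting the result to Fukaya categories through the Haiden--Katzarkov--Kontsevich / Lekili--Polishchuk equivalence quoted at the beginning of the section.

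First, I would set up the dictionary. Since $\mathcal{W}(S,M,\eta(\zD))$ is a partially wrapped Fukaya category of a graded marked surface, $(S,M)$ has no marked points in the interior and the associated graded gentle algebra $A = A(\zD)$ is homologically smooth and proper, with $\D^{\bb}(A) \simeq \mathcal{W}(S,M,\eta(\zD))$. Let $e \in A$ be the idempotent corresponding to the subset $L \subset \zD$. By Remark~\ref{remark:ribbongraph}(2), the algebra $eAe$ is isomorphic to $A(L)$, and by Theorem~\ref{main theorem cutting surfaces} the quotient-type part of the recollement is realised by $A(\zD_L)$, the graded gentle algebra of the cut surface $(S_L,M_L,\zD_L)$. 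Thus Theorem~\ref{main theorem cutting surfaces} yields the recollement
\[
\xymatrix @C=3pc{
\D(A(\zD_L)) \ar[r]
& \D(A) \ar@/_1.5pc/[l] \ar@/^1.5pc/[l] \ar[r]
& \D(A(L)) \ar@/^-1.5pc/[l] \ar@/^1.5pc/[l]
}
\]
at the level of unbounded derived categories.

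Next, I would verify that all three algebras involved are homologically smooth and proper, which is the input required by Proposition~\ref{Prop:D^{b}}. The algebra $A$ is so by hypothesis. The assumption that the graded marked surface $(S(L),M(L),\eta(L))$ has no marked points in the interior means that $A(L) \cong eAe$ is the graded gentle algebra of a surface without punctures, hence it is also homologically smooth and proper. By the remark in the introduction that two out of three such algebras in the recollement \eqref{IntroRecoll} force the third, the algebra $A(\zD_L)$ is homologically smooth and proper as well. Proposition~\ref{Prop:D^{b}} then restricts the recollement above to a recollement
\[
\xymatrix @C=3pc{
\D^{\bb}(A(\zD_L)) \ar[r]
& \D^{\bb}(A) \ar@/_1.5pc/[l] \ar@/^1.5pc/[l] \ar[r]
& \D^{\bb}(A(L)) \ar@/^-1.5pc/[l] \ar@/^1.5pc/[l]
}.
\]

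Finally, I would translate this into the Fukaya-theoretic statement. Because $A$, $A(\zD_L)$ and $A(L)$ are homologically smooth and proper graded gentle algebras with zero differential whose associated graded marked surfaces are $(S,M,\zD)$, $(S_L,M_L,\zD_L)$ and $(S(L),M(L),L)$ respectively (the latter identification being precisely Remark~\ref{remark:ribbongraph}), the HKK/LP triangle equivalences $\D^{\bb}(A(\cdot)) \simeq \mathcal{W}(\cdot,\cdot,\eta(\cdot))$ turn the displayed recollement into \eqref{eq:recollementFukaya}. The main obstacle is really the verification that the cut surface $(S_L,M_L)$ inherits the ``no interior marked points'' property, so that the Fukaya-theoretic interpretation of $\D^{\bb}(A(\zD_L))$ is available; this is the place where the ``two out of three'' smoothness/properness statement is essential, and I would double check the geometric identification of the induced grading $\eta(\zD_L)$ with the line field dual to the grading $G_L$ produced by Proposition~\ref{lemma:induce arrows}, using uniqueness of the line field inducing a given grading in the absence of $\gpoint$-punctures.
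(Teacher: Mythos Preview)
Your proposal is correct and follows essentially the same approach as the paper: the paper's proof simply invokes Proposition~\ref{Thm:Fukaya} (which packages together the ``two out of three'' smoothness/properness argument and the restriction of the recollement to $\D^{\bb}$) and then the HKK/LP equivalence, which is exactly what you have unpacked step by step. Your more explicit verification that $eAe \cong A(L)$ is homologically smooth and proper via the ``no interior marked points'' hypothesis, and the subsequent deduction for $A(\zD_L)$, is precisely the content of Proposition~\ref{Thm:Fukaya} together with Corollary~\ref{Thm:geo-Fukaya}.
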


Before proving Theorem~\ref{Thm:RecollementFukaya}, we show the following which is a consequence of Theorem~\ref{Thm:recollement1}.

\begin{Prop}\label{Thm:Fukaya}
Let $A=kQ/I$ be a graded gentle algebra and let $e$ be an idempotent in $A$.
If two of the three algebras $A$, $eAe$ and $A_{e}$ are homologically smooth and proper then so is the third one. Moreover, we have an induced  recollement
\begin{equation} \label{inducedrecoll}
\xymatrixcolsep{4pc}\xymatrix{
\D^{\rm b}(A_{e}) \ar[r]^{i_{*}=i_{!}}
&\D^{\rm b}(A) \ar@/_1.5pc/[l]_{i^{*}} \ar@/^1.5pc/[l]^{i^{!}}
 \ar[r]^{?\ot_{A}^{\bf L}Ae}
&\D^{\rm b}(eAe) \ar@/^-1.5pc/[l]_{?\ot^{\bf L}_{eAe}eA} \ar@/^1.5pc/[l]^{\RshHom_{eAe}(Ae, ?)}
  }, \end{equation}
   such that $i^{*}$ sends $e_{j}A$ to $e_{j}A_{e}$ and $i_{*}$ sends $\Top(e_{j}A_{e})$ to $\Top (e_{j}A)$ for any $j\not=1,2,\dots, m$.
In this case, $\D^{\rm b}(eAe)$ is triangle equivalent to $\D^{\rm b}((A^{!})_{(1-e)})$.
\end{Prop}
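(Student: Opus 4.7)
The plan is to apply the recollement of Theorem~\ref{Thm:recollement1} together with Proposition~\ref{Prop:D^{b}} to obtain the claimed restriction to bounded derived categories, once the two-out-of-three property for homological smoothness and properness has been established.

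First I would verify that if any two of the three algebras $A$, $eAe$, $A_e$ are homologically smooth and proper, then so is the third. For properness, I would apply the canonical triangle $j_! j^* X \to X \to i_* i^* X \to j_! j^*X[1]$ from the recollement \eqref{mainrecoll} to the compact generator $X = A$, obtaining $j_!(Ae) \to A \to i_*(A_e) \to$. Since total cohomology is additive along triangles, finite-dimensionality of $\H^*$ for two of $A$, $Ae$ (equivalently $eAe$, after decomposing $A = eA \oplus (1-e)A$ and applying Lemma~\ref{Lem:totalcoh}), and $A_e$ forces the third. For homological smoothness I would apply an analogous argument at the level of enveloping algebras: the bimodule version of the recollement triangle, induced by the idempotent $e\otimes e^{\rm op}$ in $A\otimes_k A^{\rm op}$, expresses the diagonal bimodule of one algebra in terms of those of the other two, so that perfection of two bimodules forces perfection of the third.

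Second, once all three algebras are smooth and proper, the restriction of \eqref{mainrecoll} to bounded derived categories, yielding the recollement \eqref{inducedrecoll}, is an immediate application of Proposition~\ref{Prop:D^{b}}. The extra properties of $i^*$ and $i_*$ on projectives and tops claimed in the statement are inherited unchanged from Theorem~\ref{Thm:recollement1}.

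Third, the equivalence $\D^{\rm b}(eAe) \simeq \D^{\rm b}((A^!)_{1-e})$ would be obtained from Proposition~\ref{Prop:Koszul}(2), which identifies $eAe$ with the quadratic dual $((A^!)_{1-e})^!$, combined with a Koszul-type equivalence. Concretely, I would apply Theorem~\ref{Thm:recollement1} to $A^!$ with the idempotent $1-e$, then invoke the two-out-of-three property already established (together with Proposition~\ref{Prop:Koszul}(1)) to conclude that both $eAe$ and $(A^!)_{1-e}$ are smooth and proper. The derived equivalence then follows either by invoking the Koszul duality for graded gentle algebras developed in \cite{HKK17, LP20}, or by comparing the resulting recollement on $\D(A^!)$ with the one on $\D(A)$ via this duality.

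The main obstacle I anticipate is the two-out-of-three property for homological smoothness. The properness half is a straightforward cohomology-counting argument using a single recollement triangle, but smoothness requires lifting the analysis to the enveloping-algebra level; one must construct the bimodule recollement compatibly with \eqref{mainrecoll} and carefully track the adjoints and compact generators to transfer the perfection of diagonal bimodules along the recollement.
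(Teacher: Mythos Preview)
Your Steps 2 and 3 are essentially the paper's argument: the restriction to $\D^{\rm b}$ is Proposition~\ref{Prop:D^{b}}, and the equivalence $\D^{\rm b}(eAe)\simeq \D^{\rm b}((A^!)_{1-e})$ is obtained by combining Proposition~\ref{Prop:Koszul}(2) with the Koszul-type equivalence of Proposition~\ref{Prop:Extdual}.

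The genuine gap is in Step 1. Your triangle argument for properness works in some directions (for instance, if $A$ and $eAe$ are smooth and proper one can push $Ae$ into $\per(eAe)$ and conclude that $A_e$ is proper), but it breaks down in the direction ``$eAe$ and $A_e$ smooth and proper $\Rightarrow$ $A$ proper'': the first term of the recollement triangle is $Ae\otimes^{\bf L}_{eAe}eA$, and to control its total cohomology you would need $Ae\in\D^{\rm b}(eAe)$, which already presupposes finite-dimensionality of $Ae$ and hence essentially of $A$. The bimodule argument for smoothness is more seriously flawed: the idempotent $e\otimes e^{\rm op}$ in $A\otimes_k A^{\rm op}$ does cut out $eAe\otimes_k(eAe)^{\rm op}$ on the right of the induced recollement, but the left-hand algebra $(A\otimes_k A^{\rm op})_{e\otimes e^{\rm op}}$ has no reason to be $A_e\otimes_k A_e^{\rm op}$, so you do not get a triangle relating the three diagonal bimodules in the way you need.

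The paper takes a completely different, and much more concrete, route for Step 1, exploiting that $A$ is gentle. For graded gentle algebras, smoothness and properness are detected combinatorially by the (non-)existence of cyclic paths with all consecutive pairs in $I$, respectively all outside $I$. The two-out-of-three claim is then an elementary cycle-chasing argument in the quivers of $A$, $eAe$ and $A_e$ (Lemma~\ref{Lem:AeAe} and Proposition~\ref{Prop:AetoA}), using the quadratic duality $A_e\cong((1-e)A^!(1-e))^!$ of Proposition~\ref{Prop:Koszul} together with Lemma~\ref{Lem:smoothvsproper} to swap the roles of smoothness and properness. No transfer of perfection through a recollement is needed.
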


We point out that if we only assume one of $A$, $A_{e}$ and $eAe$ is homologically smooth and proper, Proposition \ref{Thm:Fukaya} is not true in general. We give an example.
\begin{Ex}\label{Example counterexample}
Let $Q=\xymatrix{1 \ar@<.5pc>[r]^{\za} & 2  \ar@<.5pc>[l]^{\zb}}$ and $I=\langle \za\zb \rangle$. Let $A=KQ/I$ be the zero graded gentle algebra and let $e=e_{2}$. Then $eAe=K[X]/(X^{2})$ with degree $|X|=0$ and $A_{e}=K[Y]$ with $|Y|=-1$. The functor $?\ot_{eAe}^{\bf L}eA$ sends the simple $eAe$-module $k$ to the complex
\[ \cdots \xra{\zb\za} e_{2}A \xra{\zb\za} e_{2}A  \xra{\zb\za} e_{2}A \ra 0 \ra \cdots\]
which has infinite dimensional total cohomology. So $?\ot_{eAe}^{\bf L}eA$ does not restricts to $\D^{\rm b}$.
 \end{Ex}

For a homologically smooth and proper graded gentle algebra $A$, we show that the bounded derived categories of $A$ and its quadratic dual are equivalent.

\begin{Prop}\label{Prop:Extdual}
Let $A=kQ/I$ be a graded gentle algebra which is homologically smooth and proper.
Then there is a triangle equivalence $\D^{\rm b}(A) \simeq \D^{\rm b}(A^{!})$.
\end{Prop}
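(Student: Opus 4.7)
The plan is to realize the equivalence geometrically at the level of the partially wrapped Fukaya category, using the fact that $A$ and $A^{!}$ are two distinct gentle presentations of the same graded marked surface.

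First, since $A$ is homologically smooth and proper, I would invoke the Haiden--Katzarkov--Kontsevich / Lekili--Polishchuk theorem (as recalled at the start of Section~4) to identify $\D^{\rm b}(A)=\D^{\rm b}(A(\zD))\simeq \mathcal{W}(S,M,\eta(\zD))$, where $(S,M,\zD)$ is the graded marked surface associated to $A$ and $\eta(\zD)$ is the line field induced by the grading $G$ on $\zD$. Next I would identify $A^{!}\cong A(\zD^{*})$ as graded $k$-algebras, where $(\zD^{*},G^{*})$ is the dual graded $\rpoint$-dissection of Proposition-Definition~\ref{definition: dual graded dissection}. This is essentially a bookkeeping check: by construction, arrows $\alpha^{\rm op}$ of $A^{!}$ are in bijection with minimal oriented intersections of $\zD^{*}$ (namely the duals $\alpha^{\rm op}$ of Proposition-Definition~\ref{definition: dual graded dissection}(2)), the defining relations $\beta^{\rm op}\alpha^{\rm op}\in I^{\perp}$ of $A^{!}$ translate exactly into the zero-compositions at $\rpoint$-polygons dual to the non-relation compositions $\alpha\beta\notin I$ in $A$, and the degree convention $|\alpha^{\rm op}|=1-|\alpha|$ coincides with the dual grading $g(\alpha^{\rm op})=1-g(\alpha)$.

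Second, I would show that $A^{!}$ is again homologically smooth and proper. Properness follows immediately because $A$ being proper forces $A$ to be finite-dimensional (the cohomology equals $A$ itself since the differential is zero), and the quadratic monomial dual of a finite-dimensional gentle algebra is again a finite-dimensional gentle algebra. Homological smoothness for $A^{!}$ can be deduced from the geometric model: the combinatorial characterization of smoothness for graded gentle algebras depends only on the homeomorphism type of $(S,M)$ together with the line field, both of which are invariant under passage to the dual dissection. Applying HKK/LP to the now smooth and proper $A^{!}$ yields $\D^{\rm b}(A^{!})\simeq \mathcal{W}(S,M,\eta(\zD^{*}))$.

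The crucial step, and the main obstacle, is to verify that the line fields $\eta(\zD)$ and $\eta(\zD^{*})$ on $(S,M)$ are homotopic. Geometrically, a graded dissection determines a foliation by prescribing, at each minimal oriented intersection, the integer number of half-turns performed by the line field across the angle; the passage to the dual dissection replaces each angle by its complementary angle in the polygon and the relation $g(\alpha^{\rm op})=1-g(\alpha)$ is precisely the bookkeeping needed so that the resulting foliation agrees up to homotopy with the original one (this is what makes $(\zD^{*},G^{*})$ an involution, $(\zD^{**},G^{**})=(\zD,G)$). I would unwind the definition of $\eta(\zD)$ polygon by polygon, comparing the winding contributions of $\zD$ and $\zD^{*}$ on each face, to conclude $\eta(\zD)\simeq \eta(\zD^{*})$.

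Combining these steps gives the chain of triangle equivalences
\[
\D^{\rm b}(A)\;\simeq\;\mathcal{W}(S,M,\eta(\zD))\;\simeq\;\mathcal{W}(S,M,\eta(\zD^{*}))\;\simeq\;\D^{\rm b}(A^{!}),
\]
which is the desired statement. As a cross-check that is also consistent with the algebraic content of the paper, the recollement of Proposition~\ref{Thm:Fukaya} together with Proposition~\ref{Prop:Koszul} gives an inductive scheme on $|Q_{0}|$ that would, at least in principle, provide an alternative algebraic proof by gluing equivalences $\D^{\rm b}(A_{e})\simeq \D^{\rm b}((1-e)A^{!}(1-e))$ and $\D^{\rm b}(eAe)\simeq \D^{\rm b}((A^{!})_{1-e})$ along the respective TTF triples; if the line field identification above turns out to be delicate, this would be my fallback strategy.
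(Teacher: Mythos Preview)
Your overall strategy---realize both $A$ and $A^{!}$ on the same marked surface via HKK/LP and the dual dissection---is the paper's strategy, but the execution diverges precisely at the step you flag as the main obstacle. The paper does \emph{not} attempt to compare the line fields $\eta(\zD)$ and $\eta(\zD^{*})$ directly. Instead it equips $\zD':=\zD^{*}$ (up to rotation) with the grading \emph{inherited from} $\eta(\zD)$, so that $\eta(\zD')=\eta(\zD)$ holds tautologically, and sets $B:=A(\zD')$. The remaining work is then purely algebraic: under the HKK equivalence $\D^{\rm b}(A)\simeq\mathcal{W}(S,M,\eta(\zD))\simeq\D^{\rm b}(B)$ the arcs of $\zD'$ correspond to the simple dg $A$-modules $S_{i}$, and hence
\[
B\;\cong\;\bigoplus_{t\in\Z}\Hom_{\D^{\rm b}(A)}(S,S[t])\;\cong\;A^{!}
\]
by Proposition~\ref{Prop:Koszuldual}. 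So the Ext-algebra computation for the simples replaces your polygon-by-polygon comparison of foliations; in effect the paper deduces $\eta(\zD)\simeq\eta(\zD^{*})$ as a corollary rather than proving it by hand. Your route is viable but the winding-number bookkeeping you sketch would need to be made precise, whereas the paper's detour through Proposition~\ref{Prop:Koszuldual} is already available.

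One genuine error: your properness argument for $A^{!}$ is wrong. The quadratic dual of a finite-dimensional gentle algebra need not be finite-dimensional---take $A=k[x]/(x^{2})$, whose dual is $k[y]$. The correct statement (Lemma~\ref{Lem:smoothvsproper}) is that $A$ is homologically smooth iff $A^{!}$ is proper, and $A$ is proper iff $A^{!}$ is homologically smooth; you need both hypotheses on $A$ to obtain both conclusions for $A^{!}$, and each conclusion comes from the \emph{other} hypothesis than the one you invoked.
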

\begin{proof}
Let $(S,M,\Delta, G)$ be the graded surface associated to $A$. Let
$T:=\bop_{i\in Q_{0}}S_{i}$ where $S_{i}$ is the  simple dg $A$-modules corresponding to $i\in Q_{0}$. By \cite{HKK17},  $S_{i}$ corresponds to some curve $\zg_{i}$ on $S$. It is well-known that  in the ungraded case, the set $\Delta':=\{\zg_{i}\mid i\in Q_{0}\}$
is exactly the dual dissection $\Delta^{*}$  of $\Delta$ up to rotation, where $S_{i}$ can be realized as a complex of projective modules given by the curve $\zg_{i}$, which is exactly \eqref{resolofsimple} for the zero graded case. In general, when we take the grading into consideration, the arc $S_{i}$ does not change, and the total complex $\Tot(P)$ of \eqref{resolofsimple} is exactly the twisted object corresponding to $\zg_{i}$ constructed in \cite{HKK17}. So $\Delta'$, which is $\Delta^{*}$ up to rotation, is also an admissible dissection of $S$.

Now we consider the graded surface dissection $\zD'$ of $(S,M)$ whose grading is induced by $\eta(\zD)$.
Let $B$ be the graded gentle algebra corresponding to $(S, M, \Delta')$. Since by definition $\eta(\zD)$ is equal to $\eta(\zD')$, the partially wrapped Fukaya categories  $\cal W(S,M, \eta(\Delta))$ and $\cal W(S,M, \eta(\Delta'))$ are equal.  Then by \cite{HKK17} the following are equivalent
\[ \D^{\rm b}(A) \simeq \cal W(S,M, \eta(\Delta)) = \cal W(S,M, \eta(\Delta'))  \simeq \D^{\rm b}(B),  \]
where $S_{i}\in \D^{\rm b}(A)$ is sent to $\zg_{i}\in \cal W(S,M, \eta(\Delta))$, and in turn it is send to $e_{i}B\in \D^{\rm b}(B)$. Now let $\gamma = \bigoplus_{i \in Q_0} \gamma_i$ in $ \cal W(S,M, \eta(\Delta))$. Then   we obtain isomorphisms of graded algebras:
\[ B=\bop_{j\in \Z}\Hom_{\D^{\rm b}(B)}(B, B[j]) \cong \bop_{j\in \Z}\Hom_{\cal W(S,M, \eta(\Delta))}(\zg, \zg[j]) \cong \bop_{j\in \Z}\Hom_{\D^{\rm b}(A)}(T, T[j]) \cong A^{!},\]
where the last isomorphism is due to Proposition \ref{Prop:Koszuldual}.
Thus we have triangle equivalences $\D^{\rm b}(A) \simeq \D^{\rm b}(B) \simeq \D^{\rm b}(A^{!})$.
\end{proof}

 It follows from the proof of Proposition~\ref{Prop:Extdual} that the dual dissection gives rise to the geometric model for the quadratic dual. More precisely, we show the following.

\begin{Cor}
The surface $(S,M)$ together with the graded dual surface dissection $\zD^*$ is  the geometric model for $\D^{\rm b}(A^{!})$.
\end{Cor}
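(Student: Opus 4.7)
The plan is to extract the geometric statement directly from the proof of Proposition \ref{Prop:Extdual} and match it with the dual grading defined in Proposition-Definition \ref{definition: dual graded dissection}. Recall that in that proof we constructed, for each vertex $i\in Q_0$, the simple dg $A$-module $S_i$ and identified it with a curve $\zg_i$ on $(S,M)$; the collection $\zD'=\{\zg_i\mid i\in Q_0\}$ is precisely the dual dissection $\zD^*$ up to rotation. We then showed that the graded gentle algebra $B=A(\zD')$ associated to the graded surface $(S,M,\zD')$ (with grading inherited from the line field $\eta(\zD)=\eta(\zD')$) is isomorphic to $A^!$.

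First, I would invoke the bijection between homeomorphism classes of graded marked surfaces with graded admissible dissections and isomorphism classes of graded gentle algebras (\cite{OPS18,PPP18,LP20}) to conclude that $(S,M,\zD',G')$ is \emph{the} geometric model of $A^!$, where $G'$ is the grading induced by $\eta(\zD)$. Then, combined with the triangle equivalence $\D^{\bb}(A^!)\simeq \cal W(S,M,\eta(\zD'))$ from \cite{HKK17}, this will produce the required geometric model for $\D^{\bb}(A^!)$.

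Second, I would identify $(\zD',G')$ with the graded dual $(\zD^*,G^*)$ from Proposition-Definition \ref{definition: dual graded dissection}. The underlying admissible $\rpoint$-dissections agree up to homotopy (each $\zg_i=\ell_i^*$ crosses exactly one arc $\ell_i$ of $\zD$, by construction of the simples in the arc model). The content is in checking the grading. For a minimal oriented intersection $\za:\ell_i\to \ell_j$ of $\zD$ with $|\za|=g(\za)$, Proposition \ref{Prop:Koszuldual} gives a corresponding arrow $\za^{\rm op}:\ell_j^*\to \ell_i^*$ in $A^!$ of degree $1-|\za|$. On the other hand, the grading of the arrow of $B=A(\zD')$ corresponding to the dual minimal oriented intersection $\za^{\rm op}$ is computed from $\eta(\zD)$ by measuring the winding of the line field against the intersection (this is the local recipe of \cite{HKK17,LP20}); a direct local computation at the crossing shown in Figure \ref{figure:graded dual algebra} shows this winding number is exactly $1-g(\za)$. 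Hence $G'=G^*$, as wanted.

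I expect the main obstacle to be this last local verification: checking that the winding number of $\eta(\zD)$ along the dual oriented intersection $\za^{\rm op}$ equals $1-g(\za)$, in a sign/orientation convention consistent with both Proposition \ref{Prop:Koszuldual} and the conventions of \cite{HKK17,LP20}. Once the local picture at a single crossing is set up carefully, the identity $g(\za^{\rm op})=1-g(\za)$ is forced by the shift appearing in the quadratic dual, and the rest of the argument is a direct application of the bijection between graded dissections and graded gentle algebras.
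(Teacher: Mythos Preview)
Your proposal is correct and follows essentially the same route as the paper, which simply records that the corollary is read off from the proof of Proposition~\ref{Prop:Extdual} and gives no further argument. Your extraction of the relevant facts from that proof (the identification $\zD'=\zD^*$ up to rotation, the equality $\eta(\zD)=\eta(\zD')$, and the isomorphism $B\cong A^!$) is exactly what is needed.

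One simplification: the ``main obstacle'' you anticipate, the local winding-number check that $g(\za^{\rm op})=1-g(\za)$ for the line field $\eta(\zD)$, can be bypassed entirely. By the very definition of the dual grading $G^*$ in Proposition-Definition~\ref{definition: dual graded dissection}(3) one has $g(\za^{\rm op})=1-g(\za)$, and by the definition of the quadratic dual one has $|\za^{\rm op}|=1-|\za|$; together with the obvious matching of quivers and relations this yields $A(\zD^*,G^*)\cong A^!$ directly from the definitions, with no line-field computation. The bijection between graded dissections and graded gentle algebras then forces $(\zD',G')=(\zD^*,G^*)$, since both give $A^!$. So the line field enters only to guarantee, via the proof of Proposition~\ref{Prop:Extdual}, that $\zD$ and $\zD^*$ sit on the \emph{same} graded surface, i.e.\ that $\eta(\zD^*)=\eta(\zD)$; the grading identity you were worried about then comes for free.
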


We now characterize when the algebras $A$, $A_e$ and $eAe$ are homologically smooth or proper, leading to the proof of  Proposition~\ref{Thm:Fukaya}.

 Recall, for example from  \cite[Proposition 3.5]{HKK17}, that $A$ is homologically smooth if and only if there are no cyclic paths $\za_{1}\cdots\za_{n}$ with $\za_{i}\za_{i+1}\in I$ for any $i\in\Z/n\Z$ and
 $A$ is proper if and only if there are no cyclic paths $\za_{1}\cdots\za_{n}$ with $\za_{i}\za_{i+1}\not\in I$ for any $i\in\Z/n\Z$. We have the following immediate consequence.

 \begin{Lem}\label{Lem:smoothvsproper}
 $A$ is homologically smooth (resp. proper) if and only if $A^{!}$ is proper (resp. homologically smooth).
 \end{Lem}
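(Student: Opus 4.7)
The plan is to apply the characterizations of homological smoothness and properness for graded gentle algebras recalled just before the lemma, and then observe that they are exchanged by quadratic duality in a purely combinatorial way.

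First I would unwind the quadratic dual. Recall that $A^{!}=kQ^{\mathrm{op}}/\langle I^{\perp}\rangle$, where
\[ I^{\perp}=\{\beta^{\mathrm{op}}\alpha^{\mathrm{op}}\mid \alpha\beta \text{ is a path of } Q \text{ with } \alpha\beta\notin I\}. \]
In particular $A^{!}$ is again a graded quadratic monomial algebra, and a routine check shows that it is again gentle (this uses the gentle axioms on $A$: at each vertex of $Q$, having at most two arrows in/out and the two-per-arrow conditions on $I$ are exactly what is needed for the same axioms on $Q^{\mathrm{op}}$ with ideal $I^{\perp}$). Hence the cyclic-path criterion stated just before the lemma applies to both $A$ and $A^{!}$.

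Next I would set up the bijection between cyclic paths. Any oriented cycle $\alpha_{1}\alpha_{2}\cdots\alpha_{n}$ in $Q$ (indices in $\mathbb{Z}/n\mathbb{Z}$) gives rise to the oriented cycle $\alpha_{n}^{\mathrm{op}}\alpha_{n-1}^{\mathrm{op}}\cdots\alpha_{1}^{\mathrm{op}}$ in $Q^{\mathrm{op}}$, and this assignment is an involutive bijection between cyclic paths in $Q$ and cyclic paths in $Q^{\mathrm{op}}$. Under this bijection, consecutive pairs $\alpha_{i}\alpha_{i+1}$ in $Q$ correspond to consecutive pairs $\alpha_{i+1}^{\mathrm{op}}\alpha_{i}^{\mathrm{op}}$ in $Q^{\mathrm{op}}$, and by the very definition of $I^{\perp}$ we have
\[ \alpha_{i+1}^{\mathrm{op}}\alpha_{i}^{\mathrm{op}}\in I^{\perp} \iff \alpha_{i}\alpha_{i+1}\notin I. \]

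With these two ingredients the lemma falls out immediately. The algebra $A$ is homologically smooth iff there is no cyclic path in $Q$ all of whose consecutive products lie in $I$; by the bijection, this is equivalent to there being no cyclic path in $Q^{\mathrm{op}}$ all of whose consecutive products lie \emph{outside} $I^{\perp}$, which is exactly the condition that $A^{!}$ is proper. The proper/smooth direction is the mirror argument, and since $(A^{!})^{!}=A$ both equivalences are in fact equivalent to the single statement of the lemma. The only step requiring slight care is verifying that $A^{!}$ is still gentle so that the cyclic-path characterization applies to it; this is a short combinatorial check using the gentle axioms on $(Q,I)$. No genuine obstacle is expected.
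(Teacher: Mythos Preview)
Your proof is correct and is exactly the argument the paper has in mind: it states the lemma as an ``immediate consequence'' of the cyclic-path characterizations of smoothness and properness recalled just before, and your combinatorial unwinding via the bijection $\alpha_1\cdots\alpha_n \leftrightarrow \alpha_n^{\mathrm{op}}\cdots\alpha_1^{\mathrm{op}}$ together with $\alpha_{i+1}^{\mathrm{op}}\alpha_i^{\mathrm{op}}\in I^{\perp}\iff \alpha_i\alpha_{i+1}\notin I$ is precisely why this is immediate.
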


And we have the following observation.
\begin{Lem}
\begin{enumerate}[\rm(1)]
\item
If $A$ is proper, then $eAe$ is also proper.
\item If $A$ is homologically smooth, then $A_{e}$ is also homologically smooth.
\end{enumerate}
\end{Lem}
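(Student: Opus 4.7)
The plan is to treat the two parts separately: part~(1) reduces to a finite-dimensionality observation, while part~(2) follows from part~(1) via the quadratic dual.

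First I would dispose of part~(1). Since $A=kQ/\langle I\rangle$ is a graded algebra viewed as a dg algebra with zero differential, its total cohomology coincides with $A$ itself as a graded vector space. Hence $A$ is proper precisely when $\dim_{k}A<\infty$. Because $eAe$ is a graded $k$-subspace of $A$ (with the inherited trivial differential), it too is finite-dimensional, and therefore proper. Equivalently one can argue directly on the quiver: a cyclic permitted path $\beta_{1}\cdots\beta_{n}$ in $eAe$ would lift to a cyclic sequence of arrows in $A$ whose consecutive products avoid $I$, contradicting properness of $A$; but the vector-space argument is cleaner and avoids having to analyse how arrows of $eAe$ arise from paths in $A$.

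For part~(2) my plan is to reduce to part~(1) using the quadratic dual description of $A_{e}$. By Proposition~\ref{Prop:Koszul}(1) there is an isomorphism of dg algebras
\[ A_{e}\cong \bigl((1-e)A^{!}(1-e)\bigr)^{!}. \]
Assuming $A$ is homologically smooth, Lemma~\ref{Lem:smoothvsproper} yields that $A^{!}$ is proper. Applying part~(1) to the graded quadratic monomial algebra $A^{!}$ with the idempotent $1-e$, the truncation $(1-e)A^{!}(1-e)$ is again proper. Since quadratic duality is an involution (so that $(A_{e})^{!}\cong (1-e)A^{!}(1-e)$), this says that $(A_{e})^{!}$ is proper, and applying Lemma~\ref{Lem:smoothvsproper} in the other direction concludes that $A_{e}$ is homologically smooth.

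I do not anticipate any real obstacle: both statements follow immediately from results already established in the paper. The only conceptual point worth highlighting is that smoothness and properness are swapped under the quadratic dual, so that part~(2) is, in this sense, formally dual to part~(1). It is perhaps also worth noting (and will be needed for the companion statement that two out of three smooth-and-proper implies the third, used in Proposition~\ref{Thm:Fukaya}) that the converses to (1) and (2) do \emph{not} hold in general, as illustrated by Example~\ref{Example counterexample}.
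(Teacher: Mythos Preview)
Your proof is correct and follows essentially the same approach as the paper: part~(1) is immediate from $eAe$ being a subalgebra of the finite-dimensional algebra $A$, and part~(2) is deduced from part~(1) via the quadratic dual identification $A_{e}\cong ((1-e)A^{!}(1-e))^{!}$ together with Lemma~\ref{Lem:smoothvsproper}. The paper's proof is simply more terse, writing only that ``(2) directly follows using Lemma~\ref{Lem:smoothvsproper}'' without spelling out the intermediate use of Proposition~\ref{Prop:Koszul}(1) and part~(1) as you do.
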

\begin{proof}
Since $eAe$ is a subalgebra of $A$, so it is clear that $eAe$ is proper showing (1) and (2) directly follows using Lemma~\ref{Lem:smoothvsproper}. \end{proof}

Notice that  $A$ is homologically smooth does not necessarily imply $eAe$ homologically smooth, see for example Example~\ref{Example counterexample}. The same example shows that  $A$ proper does not imply $A_e$ proper.

\begin{Lem}\label{Lem:AeAe}
Let $A$ be homologically smooth and proper. Then the following are equivalent.
 \begin{enumerate}[\rm (1)]
  \item $eAe$ is homologically smooth.
  \item $A_{e}$ is proper.
 \end{enumerate}
\end{Lem}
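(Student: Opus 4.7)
\emph{Strategy.} The plan is to prove $(1)\Rightarrow(2)$ directly from the recollement \eqref{mainrecoll}, and to deduce $(2)\Rightarrow(1)$ by applying $(1)\Rightarrow(2)$ to the Koszul dual data $(A^{!},1-e)$. This is legitimate because $A^{!}$ is again homologically smooth and proper by Lemma \ref{Lem:smoothvsproper}; moreover, combining Proposition \ref{Prop:Koszul} with Lemma \ref{Lem:smoothvsproper} yields $(A_e)^{!}\cong (1-e)A^{!}(1-e)$ and $(eAe)^{!}\cong (A^{!})_{1-e}$, so ``$A_e$ proper'' translates to ``$(1-e)A^{!}(1-e)$ homologically smooth'' and ``$eAe$ homologically smooth'' translates to ``$(A^{!})_{1-e}$ proper''. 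Hence the already established implication $(1)\Rightarrow(2)$ applied to $(A^{!},1-e)$ delivers $(2)\Rightarrow(1)$ for $(A,e)$.

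For $(1)\Rightarrow(2)$, assume $eAe$ is homologically smooth. As a subalgebra of the finite-dimensional algebra $A$, $eAe$ is also proper, so $\D^{\rm b}(eAe)=\per(eAe)$ by Lemma \ref{Lem:Db=per}. Since $A$ is finite-dimensional, $Ae$ is a finite-dimensional $eAe$-module and therefore lies in $\D^{\rm b}(eAe)=\per(eAe)$. Applying the recollement \eqref{mainrecoll} to $A$ and using $j^{*}(A)=Ae$ and $i^{*}(A)=A_e$ from Theorem \ref{Thm:recollement1}, one obtains the canonical triangle
\[ j_{!}(Ae)\lra A\lra i_{*}(A_e)\lra j_{!}(Ae)[1]. \]
Since $j_{!}=?\otimes^{\bf L}_{eAe}eA$ sends $eAe$ to the direct summand $eA$ of $A$, it restricts to $\per(eAe)\to\per(A)$, so $j_{!}(Ae)\in\per(A)$. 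Together with $A\in\per(A)=\D^{\rm b}(A)$, the triangle forces $i_{*}(A_e)\in\D^{\rm b}(A)$.

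It remains to deduce $A_e\in\D^{\rm b}(A_e)$, i.e.\ that $A_e$ is proper; this is the main technical point. Recall from the proof of Theorem \ref{Thm:recollement1} that $i_{*}$ factors as the inclusion $\widetilde{i_{*}}:\D(A_e)\hookrightarrow\D(B)$ attached to the homological epimorphism $B\twoheadrightarrow A_e=B/BeB$ of Lemma \ref{Lem:A'recB}, followed by the triangle equivalence $(\phi^{*})^{-1}:\D(B)\xra{\simeq}\D(A)$ coming from the quasi-isomorphism $\phi:B=A_{J_{e}}\to A$ of Theorem \ref{Thm:AJA}. The functor $\widetilde{i_{*}}$ only changes the module structure and so preserves the underlying $k$-complex verbatim, while $(\phi^{*})^{-1}$ is computed by $?\otimes^{\bf L}_{B}A$ and preserves total cohomology because $\phi$ is a quasi-isomorphism. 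Hence $H^{*}(i_{*}(X))\cong H^{*}(X)$ as graded $k$-vector spaces for every $X\in\D(A_e)$, and the conclusion $i_{*}(A_e)\in\D^{\rm b}(A)$ of the preceding paragraph upgrades to $A_e\in\D^{\rm b}(A_e)$. This finishes $(1)\Rightarrow(2)$ and, via the strategy paragraph, the lemma.
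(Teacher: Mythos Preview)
Your proof is correct, and the reduction of $(2)\Rightarrow(1)$ to $(1)\Rightarrow(2)$ via the Koszul-dual pair $(A^{!},1-e)$ is exactly the paper's own argument for that direction.

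For $(1)\Rightarrow(2)$, however, you take a genuinely different route. The paper argues combinatorially by contrapositive: if $A_{e}$ is not proper, the cyclic-path criterion produces a relation-free cycle in $Q_{e}$; unpacking each arrow $[\beta_{l_{1}}\cdots\beta_{l_{r(l)}}]$ of that cycle into its constituent arrows of $Q$ yields a cycle whose internal vertices are $e$-vertices and whose internal compositions lie in $I$, which witnesses that $eAe$ fails the smoothness criterion. Your argument is instead categorical: from $eAe$ homologically smooth and proper you get $Ae\in\per(eAe)$, hence $j_{!}(Ae)\in\per(A)$, and the canonical recollement triangle forces $i_{*}(A_{e})\in\D^{\rm b}(A)$; since $i_{*}$ factors through a restriction-of-scalars and a quasi-inverse to $\phi^{*}$, it preserves total cohomology, so $A_{e}$ itself has finite-dimensional total cohomology.

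Each approach has its merits. The paper's combinatorial proof is elementary and exhibits the obstructing cycle explicitly, staying close to the quiver presentation. Your recollement argument is more conceptual and does not rely on the cyclic-path characterisation specific to gentle algebras: it works for any graded quadratic monomial algebra $A$ to which Theorem \ref{Thm:recollement1} applies, using only that $A$ is proper and that $i_{*}$ preserves underlying complexes up to quasi-isomorphism. It also avoids the slightly delicate bookkeeping of tracking which compositions lie in $I$ versus $I_{e}$.
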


\begin{proof}
$(1)\Rightarrow (2)$ If $A_{e}$ is not proper, then  there is a cyclic  path $\za_{1}\cdots\za_{p}$ with $\za_{i}\za_{i+1}\not\in I$ for all $i\in \Z/p\Z$. We may assume without loss of generality that $\za_{l}=[\zb_{l_{1}}\cdots\zb_{l_{r(l)}}]$ for any $1\le l\le q$ and $\za_{l}=[\za_{l}]$ for any $q<l\le p$.
Then $\zb_{1_{1}}\cdots\zb_{1_{r(1)}}\cdots\zb_{q_{r(q)}}$ is a cyclic path in $eAe$ with any composition of successive paths is in $I$ and  $eAe$ is not homologically smooth.

$(2)\Rightarrow (1)$ Note that $A_{e}\cong ((1-e)A^{!}(1-e))^{!}$ and $eAe\cong ((A^{!})_{1-e})^{!}$ by Proposition \ref{Prop:Koszul}, and the result follows from the first part of the proof.
\end{proof}

Note that for $(1)\Rightarrow (2)$, it is enough to assume that $A$ is proper and dually, for $(2)\Rightarrow (1)$ it is enough to assume $A$ is homologically smooth.

\begin{Prop}\label{Prop:AetoA}
\begin{enumerate}[\rm(1)]
\item Assume $A_{e}$ and $eAe$ both are  proper, then $A$ is also proper.

\item Assume $A_{e}$ and $eAe$ both are homologically smooth, then $A$ is also homologically smooth.
\end{enumerate}
\end{Prop}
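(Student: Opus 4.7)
The plan is to work directly with the combinatorial characterizations of properness and homological smoothness just recalled: $A$ fails to be proper precisely when it admits a \emph{permitted cycle}, i.e.\ a cyclic path $\za_1 \cdots \za_n$ with $\za_i \za_{i+1} \notin I$ for every $i \in \Z/n\Z$, and it fails to be homologically smooth precisely when it admits a \emph{forbidden cycle}, i.e.\ a cyclic path with $\za_i \za_{i+1} \in I$ for every $i \in \Z/n\Z$. My plan is to prove (1) by contraposition at this combinatorial level, and then to deduce (2) by quadratic duality, using Proposition~\ref{Prop:Koszul} to identify the quadratic duals of $A_e$ and $eAe$ inside $A^!$ and Lemma~\ref{Lem:smoothvsproper} to swap properness with homological smoothness.

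For (1), I would start from a permitted cycle $\gamma = \za_1 \cdots \za_n$ in $A$ and exhibit a permitted cycle in either $A_e$ or $eAe$, writing $\{1,\dots,m\}$ for the support of $e$. If every vertex visited by $\gamma$ lies in $Q_0 \setminus \{1,\dots,m\}$, then each $\za_i$ lifts to a single-letter generator $[\za_i]$ of $Q_e$ and $[\za_1]\cdots[\za_n]$ is cyclic in $A_e$; by Definition~\ref{definition:left algebra}(4) a composition $[\za_i][\za_{i+1}]$ can only lie in $I_e$ if $\za_i \za_{i+1} \in I$, which is excluded, so $A_e$ has a permitted cycle and is not proper. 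Otherwise some vertex $v_{i_0}$ of $\gamma$ lies in $\{1,\dots,m\}$, and after a cyclic rotation I may assume $s(\za_1) = t(\za_n) = v_{i_0}$; then $\gamma = e\gamma e \in eAe$, and the cyclic form of the permitted condition (including the wraparound $\za_n \za_1 \notin I$) guarantees that every power $\gamma^k$ is a nonzero path in $A$, so $\{\gamma^k\}_{k \geq 0}$ is an infinite linearly independent family in $eAe$ and $eAe$ is not proper.

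Part (2) then follows by duality: Proposition~\ref{Prop:Koszul} gives $A_e \cong ((1-e)A^!(1-e))^!$ and $eAe \cong ((A^!)_{1-e})^!$, so setting $B := A^!$ and $f := 1-e$ I read off $(A_e)^! \cong fBf$ and $(eAe)^! \cong B_f$. If $A_e$ and $eAe$ are homologically smooth, Lemma~\ref{Lem:smoothvsproper} turns this into properness of $fBf$ and $B_f$; part (1) applied to $(B,f)$ then forces $B = A^!$ to be proper, and a final application of Lemma~\ref{Lem:smoothvsproper} yields that $A$ is homologically smooth. The main delicate point I anticipate is the second case of (1), specifically checking that rotating $\gamma$ to begin in the support of $e$ really lands it inside $eAe$ \emph{and} preserves nonvanishing of all powers $\gamma^k$; this is exactly where the cyclicity (rather than merely the linearity) of the permitted-cycle condition is essential, and once that is in hand the rest of the proof is a routine translation plus the quadratic-duality bookkeeping.
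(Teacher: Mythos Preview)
Your proposal is correct and follows essentially the same route as the paper: part (1) is proved by contraposition, splitting on whether the permitted cycle in $A$ visits a vertex in the support of $e$ (if so it yields a permitted cycle in $eAe$, if not it descends verbatim to $A_e$), and part (2) is deduced from (1) via Proposition~\ref{Prop:Koszul} and Lemma~\ref{Lem:smoothvsproper}. Your treatment of the $eAe$ case is slightly more explicit than the paper's (you spell out that all powers $\gamma^k$ are nonzero, whereas the paper simply says $eAe$ ``contains a cycle without relations''), but the underlying argument is identical.
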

\begin{proof}
(1)
If $A$ is not proper, then there is a cyclic path $\za_{1}\cdots\za_{p}$ with $\za_{i}\za_{i+1}\not\in I$ for any $i\in\Z/p\Z$.  Recall that $e = e_1 + \cdots e_m$. If $\{s(\za_{1}), \dots, s(\za_{p})\} \cap \{1,\dots,m\}\not=\emptyset$, then $eAe$ contains a cycle without relations, which contradicts  the assumption that $eAe$ is proper. So $\{s(\za_{1}), \dots, s(\za_{p})\} \cap \{1,\dots,m\}=\emptyset$, and in this case, the cyclic path $\za_{1}\cdots\za_{p}$ is contained in $A_{e}$, which implies $A_{e}$ is non-proper, a contradiction.
Thus $A$ is proper.

(2) Since $A_{e}\cong ((1-e)A^{!}(1-e))^{!}$ and $eAe\cong ((A^{!})_{1-e})^{!}$ by Proposition \ref{Prop:Koszul}, we have that  (2)  directly follows from (1) and Lemma~\ref{Lem:smoothvsproper}.
\end{proof}

Now we  prove Proposition \ref{Thm:Fukaya}.

\begin{proof}[Proof of Proposition \ref{Thm:Fukaya}]
By Lemma \ref{Lem:AeAe} and Proposition \ref{Prop:AetoA}, we know that if  any two of $A$, $A_{e}$ and $eAe$ are homologically smooth and proper then  the third one is also homologically smooth and proper.
In this case, the recollement \eqref{mainrecoll} restricts to bounded derived categories by Proposition \ref{Prop:D^{b}}.
In this case, since $eAe\cong ((A^{!})_{1-e})^{!}$ by Proposition \ref{Prop:Koszul}, and since $eAe
$ is homologically smooth and proper, by Proposition \ref{Prop:Extdual}, we have a triangle equivalence $\D^{\rm b}(eAe)\simeq \D^{\rm b}((A^{!})_{(1-e)})$.
\end{proof}

We have the following geometric version of Proposition \ref{Thm:Fukaya}, recall that if $A$ is a graded gentle algebra associated to a graded marked surface $(S,M,\zD,G)$ then   $A$ is homologically smooth if and only if there are no $\rpoint$-punctures in $M$ and
$A$ is proper if and only if there are no $\gpoint$-punctures in $M$.

\begin{Cor}\label{Thm:geo-Fukaya}
Let $(S,M)$ be the marked surface with a graded admissible dissection $(\zD,G)$. Let $L$ be a subset of $\zD$. If two out of the three surfaces $(S,M)$, $(S_L,M_L)$ and $(S_{\zD^*\setminus L^*},M_{\zD^*\setminus L^*})$ have no punctures, then neither has  the third.
\end{Cor}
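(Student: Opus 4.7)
The plan is to translate the geometric condition ``no punctures'' into an algebraic one and then invoke Proposition~\ref{Thm:Fukaya}. Recall the dictionary recalled immediately before the Corollary: a graded gentle algebra associated to $(S,M,\zD,G)$ is homologically smooth if and only if $M$ has no $\rpoint$-punctures, and is proper if and only if $M$ has no $\gpoint$-punctures. Consequently, a marked surface has no punctures at all precisely when its associated graded gentle algebra is both homologically smooth and proper.

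Let $A = A(\zD)$, let $e \in A$ be the idempotent corresponding to the subset $L \subset \zD$. By Proposition~\ref{proposition:red dual}, the surface $(S_L, M_L, \zD_L)$ corresponds to the algebra $A_e = A(\zD_L)$. For the third surface $(S_{\zD^*\setminus L^*}, M_{\zD^*\setminus L^*})$, the Corollary following Remark~\ref{remark:ribbongraph} states that this surface is homeomorphic to $(S(L), M(L))$ (the surface built from $L$ as a graded ribbon subgraph), whose associated graded gentle algebra is $A(L) \cong eAe$ (Remark~\ref{remark:ribbongraph}(2)). Thus the three surfaces in the statement correspond, respectively, to the three algebras $A$, $A_e$, $eAe$.

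Under this dictionary, the hypothesis that two of the three surfaces have no punctures translates exactly to the statement that two of the three algebras $A$, $A_e$, $eAe$ are simultaneously homologically smooth and proper. Proposition~\ref{Thm:Fukaya} then immediately yields that the third algebra in the triple is also homologically smooth and proper, which, again via the dictionary, means that the remaining surface has no punctures.

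The main work is therefore not in the Corollary itself but in the identification of the three surfaces with the three algebras: the left-hand side follows from Proposition~\ref{proposition:red dual}, while for the right-hand side one must pass through the quadratic dual via Proposition~\ref{Prop:Koszul} to recognize $A(L) \cong eAe$. Once these identifications are in place, the result is an immediate consequence of Proposition~\ref{Thm:Fukaya} with no further obstacle.
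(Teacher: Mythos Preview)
Your proof is correct and follows exactly the approach the paper intends: the Corollary is presented in the paper as ``the geometric version of Proposition~\ref{Thm:Fukaya}'' with the puncture/smoothness--properness dictionary recalled immediately before it, and no separate proof is given. Your write-up simply makes explicit the identifications (via Proposition~\ref{proposition:red dual} and Remark~\ref{remark:ribbongraph}) that the paper leaves to the reader.
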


\begin{proof}[Proof of Theorem~\ref{Thm:RecollementFukaya}]
 Recall that if a graded gentle algebra $A$ with associated graded marked surface $(S,M,\zD)$ is homologically smooth and proper then $\D^{\rm b}(A)$ is triangle equivalent to the partially wrapped Fukaya category $\mathcal{W}(S,M,\eta_\zD)$, see \cite[Proposition 3.4]{HKK17} and \cite[Lemma 3.3]{LP20}. Then the result directly follows from Proposition~\ref{Thm:Fukaya} and the above.
\end{proof}

 We end this section by showing the following result.
\begin{Prop}\label{Prop:ladder}
 The recollement \eqref{inducedrecoll}  can be extended to an unbounded ladder.
 \end{Prop}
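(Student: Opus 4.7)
The plan is to iterate adjoint constructions via Serre duality, using Lemma~\ref{Lem:adjrecol} to package each new pair of adjoints into a new recollement.

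First, since $A$, $A_{e}$, and $eAe$ are homologically smooth and proper, Lemma~\ref{Lem:Db=per} gives $\D^{\bb}(-)=\per(-)$ for each, and each of these Hom-finite triangulated categories admits a Serre functor $\nu=-\otimes^{\bf L}D(-)$, which is an auto-equivalence. A routine Serre-duality calculation then shows that any triangle functor $F\colon\per X\to\per Y$ between perfect derived categories of homologically smooth proper dg algebras with a right adjoint $G$ automatically admits a left adjoint, given by $\nu_{X}^{-1}\circ G\circ \nu_{Y}$; dually, any left adjoint produces a right adjoint.

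Applying this to \eqref{inducedrecoll}, the functor $i^{*}$ has right adjoint $i_{*}$ and hence acquires a left adjoint, while $j_{!}$ has right adjoint $j^{*}$ and likewise acquires a left adjoint. By Lemma~\ref{Lem:adjrecol}(1) these two left adjoints are fully faithful and determine a new TTF triple, which by Proposition~\ref{Prop:TTF} corresponds to a recollement sitting one rung above \eqref{inducedrecoll}. Symmetrically, Lemma~\ref{Lem:adjrecol}(2) applied to right adjoints of $i^{!}$ and $j_{*}$ yields a recollement one rung below.

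The key point is that every newly constructed functor is again a triangle functor between the same three perfect derived categories of homologically smooth proper dg algebras. Hence Serre duality applies again to produce the next level of adjoints, and Lemma~\ref{Lem:adjrecol} packages them into the next recollement. By induction this procedure continues indefinitely in both directions, yielding an unbounded ladder extending \eqref{inducedrecoll} in the sense of \cite{AKLY}. The main subtlety is that Lemma~\ref{Lem:adjrecol} requires both outer functors of a recollement to acquire adjoints on the same side simultaneously; this is automatic here, since Serre duality furnishes the adjoint of each functor individually at every stage.
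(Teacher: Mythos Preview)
Your proof is correct and follows essentially the same route as the paper: the paper invokes Lemma~\ref{Lem:Serre} to obtain Serre functors on all three bounded derived categories, then uses Lemma~\ref{Lem:ZZZZ} (precisely your ``routine Serre-duality calculation'' that $\nu_{X}^{-1}G\nu_{Y}$ is left adjoint to $F$) to produce new adjoints inductively, and finally appeals to Proposition~\ref{Prop:TTF} and Lemma~\ref{Lem:adjrecol} to organize these into a ladder. The only difference is cosmetic---you spell out inline what the paper packages into the two preparatory lemmas.
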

 Recall that an \emph{unbounded ladder} introduced in \cite[Section 3]{AKLY} is an infinite diagram of triangulated categories and triangle functors
 \begin{center}
\begin{picture}(120,60)

\put(0,20){\makebox(3,1){${\mathcal C}'$}}

\put(25,50){\vdots}

\put(10,39){\vector(1,0){30}}

\put(40,29){\vector(-1,0){30}}

\put(10,20){\vector(1,0){30}}

\put(40,11){\vector(-1,0){30}}

\put(10,1){\vector(1,0){30}}

\put(25,-10){\vdots}

\put(50,20){\makebox(3,1){${\mathcal C}$}}

\put(75,50){\vdots}

\put(60,39){\vector(1,0){30}}

\put(90,29){\vector(-1,0){30}}

\put(60,20){\vector(1,0){30}}

\put(90,11){\vector(-1,0){30}} \put(60,1){\vector(1,0){30}}
\put(75,-10){\vdots}

\put(100,20){\makebox(3,1){${\mathcal
C}''$}}

\put(23,43){\makebox(3,1){\scriptsize$i_{-2}$}}
\put(75,43){\makebox(3,1){\scriptsize$j_{-2}$}}
\put(75,33){\makebox(3,1){\scriptsize$i_{-1}$}}
\put(23,33){\makebox(3,1){\scriptsize$j_{-1}$}}
\put(23,24){\makebox(3,1){\scriptsize$i_0$}}
\put(75,24){\makebox(3,1){\scriptsize$j_0$}}
\put(23,15){\makebox(3,1){\scriptsize$j_1$}}
\put(75,15){\makebox(3,1){\scriptsize$i_1$}}
\put(23,5){\makebox(3,1){\scriptsize$i_2$}}
\put(75,5){\makebox(3,1){\scriptsize$j_2$}}
\put(275,15){\makebox(25,1)}
\end{picture}
\end{center}

\vskip10pt \noindent such that any three consecutive
rows form a recollement  of $\mathcal C$
relative to $\mathcal C'$ and $\mathcal C''$.
To show Proposition \ref{Prop:ladder}, we need the following two observations.
\begin{Lem}\label{Lem:Serre}
Assume $A$ is a dg $k$-algebra which is homologically smooth and proper, then $\D^{\rm b}(A)$ admits a Serre functor which is given by the Nakayama functor $\nu_{A}:=?\ot_{A}^{\bf L}DA$.
\end{Lem}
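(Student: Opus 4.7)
The plan is to verify the Serre duality natural isomorphism
\[ D\Hom_{\D^{\bb}(A)}(X, Y) \cong \Hom_{\D^{\bb}(A)}(Y, \nu_{A} X) \]
for $X, Y \in \D^{\bb}(A)$, since the existence of such a natural isomorphism automatically implies that $\nu_{A}$ is an autoequivalence by the standard Yoneda characterisation of Serre functors.

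First I would invoke Lemma~\ref{Lem:Db=per} to replace $\D^{\bb}(A)$ by $\per(A) = \thick(A)$, so that the whole argument takes place in the thick subcategory generated by $A$. In particular I need to check that $\nu_{A}$ preserves $\per(A)$; this reduces to verifying $\nu_{A}(A) = DA \in \per(A)$, which follows from properness of $A$ (so that $DA$ has finite-dimensional total cohomology, i.e.\ $DA \in \D^{\bb}(A)$) combined with the identification $\D^{\bb}(A) = \per(A)$.

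The core computation is the natural quasi-isomorphism of $k$-complexes
\[ \RHom_{A}(Y, DA) \cong DY, \]
obtained from tensor-hom adjunction applied to $DA = \RHom_{k}(A, k)$:
\[ \RHom_{A}(Y, \RHom_{k}(A, k)) \cong \RHom_{k}(Y \ot_{A}^{\bf L} A, k) \cong \RHom_{k}(Y, k). \]
Taking $H^{n}$ and specialising to $X = A[n]$ yields the desired Serre duality formula in this case:
\[ D\Hom(A[n], Y) = DH^{-n}(Y) \cong H^{n}(DY) \cong \Hom(Y, DA[n]) = \Hom(Y, \nu_{A}(A[n])). \]
I would then extend to arbitrary $X \in \per(A)$ by devissage: both $X \mapsto D\Hom_{\D(A)}(X, Y)$ and $X \mapsto \Hom_{\D(A)}(Y, \nu_{A} X)$ are covariant cohomological functors on $\per(A)$ which agree on $A$ and all its shifts, and hence agree on $\thick(A) = \per(A)$. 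Naturality in $Y$ is inherited from naturality of the underlying adjunctions.

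The hard part will be upgrading the above isomorphisms from objectwise ones to genuine natural transformations compatible with distinguished triangles and with morphisms in both variables; this is routine given a proper dg-enhanced setup (e.g.\ via h-projective or h-injective resolutions making the derived $\ot^{\bf L}$ and $\RHom$ strictly functorial), but deserves care. A related subtle point---needed should one prefer to produce an explicit quasi-inverse of $\nu_{A}$ rather than invoke the Yoneda principle---is to verify that $DA$ lies in $\per(A\ot_{k}A^{\rm op})$, which combines smoothness (giving $A \in \per(A \ot_{k}A^{\rm op})$) with properness of $A$.
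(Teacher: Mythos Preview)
Your proposal is correct and uses the same ingredients as the paper, but arranged in the opposite order. The paper first shows that $\nu_A$ is an autoequivalence---citing \cite{Keller94} for the equivalence $\nu_A:\per(A)\xra{\simeq}\thick(DA)$ and then proving $\thick(DA)=\D^{\bb}(A)$ by observing that the $k$-duality $D$ carries $\per(A^{\rm op})=\D^{\bb}(A^{\rm op})$ onto $\thick_{\D(A)}(DA)$ and $\D^{\bb}(A)$ respectively---and only afterwards records the Serre duality isomorphism. You instead establish the duality isomorphism first by d\'evissage from $X=A$, and then appeal to a ``Yoneda principle'' to conclude that $\nu_A$ is an equivalence.

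One caution on that last step: the natural isomorphism $D\Hom(X,Y)\cong\Hom(Y,\nu_A X)$ by itself yields only that $\nu_A$ is fully faithful; essential surjectivity still requires knowing that $DA$ generates $\per(A)$, i.e.\ that $\thick(DA)=\per(A)$. This is exactly the content of the paper's $A^{\rm op}$ argument, and is equally obtainable from your final-paragraph route through $DA\in\per(A\ot_{k}A^{\rm op})$ (which produces an explicit quasi-inverse). So the Yoneda shortcut does not quite let you bypass this point, but you have already identified the missing input.
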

\begin{proof}
Since $A$ is homologically smooth and proper,
we have
$\per (A)=\D^{\rm b}(A)$. In particular, $\per (A)$ is Hom-finite and $\nu_{A}$ induces a triangle equivalence from $\per(A)$ to $\thick(DA)$ (see \cite[Section 10]{Keller94}).
Since we also have $\per(A^{\rm op})=\D^{\rm b}(A^{\rm op})$, by using the $k$-duality $D$, we obtain that $\thick(DA)=\D^{\rm b}(A)$. So $\nu_{A}$ gives an auto-equivalence of $\D^{\rm b}(A)$.
Moreover, for any $X\in \D(A)$ and $Y\in \per(A)$, we have
\[ \Hom_{\D(A)}(X, \nu_A(Y)) \cong D\Hom_{\D(A)}(Y,X). \]
Thus $\nu_{A}$ is a Serre functor of $\D^{\rm b}(A)$.
\end{proof}

\begin{Lem}\label{Lem:ZZZZ}
\cite[Lemma 2.8]{ZZZZ}
Let $\cal C$ and $\cal D$ be two Hom-finite triangulated categories with Serre functors $\nu_{\cal C}$ and $\nu_{\cal D}$ respectively.  Assume $F:\cal C\ra \cal D$ is a triangle functor with a right adjoint $G: \cal D\ra \cal C$. Then $F$ admits a left adjoint $\nu_{\cal C}^{-1}G\nu_{\cal D}$ and $G$ admits a right adjoint $\nu_{\cal D}F\nu_{\cal C}^{-1}$.
\end{Lem}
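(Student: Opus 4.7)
The strategy is to feed the existence of Serre functors (Lemma \ref{Lem:Serre}) into the general adjoint-production lemma (Lemma \ref{Lem:ZZZZ}), and then convert each new adjoint into a new recollement via Lemma \ref{Lem:adjrecol}.

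\emph{Step 1 (Serre functors on all three vertices).} By Proposition \ref{Thm:Fukaya}, the three dg algebras $A$, $A_{e}$ and $eAe$ are all homologically smooth and proper. Applying Lemma \ref{Lem:Serre} to each of them, the three bounded derived categories $\D^{\bb}(A_{e})$, $\D^{\bb}(A)$ and $\D^{\bb}(eAe)$ are Hom-finite and admit Serre functors given by the Nakayama functors $\nu_{A_{e}}$, $\nu_{A}$ and $\nu_{eAe}$ respectively.

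\emph{Step 2 (first outer adjoints).} Applied to the adjoint pairs $(i^{*},i_{*})$, $(i_{*},i^{!})$, $(j_{!},j^{*})$ and $(j^{*},j_{*})$ appearing in \eqref{inducedrecoll}, Lemma \ref{Lem:ZZZZ} produces a left adjoint for $i^{*}$ (and correspondingly for $j_{!}$) and a right adjoint for $i^{!}$ (and correspondingly for $j_{*}$), with explicit formulas of the shape $\nu^{-1}(\cdot)\nu$ and $\nu(\cdot)\nu^{-1}$.

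\emph{Step 3 (new recollements above and below).} By Lemma \ref{Lem:adjrecol}(1), the existence of a left adjoint $F$ to $i^{*}$ produces a TTF triple $(F(\D^{\bb}(A_{e})),\, j_{!}(\D^{\bb}(eAe)),\, i_{*}(\D^{\bb}(A_{e})))$ on $\D^{\bb}(A)$, which by Proposition \ref{Prop:TTF} corresponds to a new recollement one row higher in the ladder. Dually, Lemma \ref{Lem:adjrecol}(2) applied to the right adjoint $G$ of $i^{!}$ produces the TTF triple $(i_{*}(\D^{\bb}(A_{e})),\, j_{*}(\D^{\bb}(eAe)),\, G(\D^{\bb}(A_{e})))$, yielding a recollement one row lower. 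Because $F$ and $G$ are fully faithful (again by Lemma \ref{Lem:adjrecol}) and the outer terms are equivalent to $\D^{\bb}(A_{e})$ and $\D^{\bb}(eAe)$, these recollements are again of the three categories appearing in \eqref{inducedrecoll}.

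\emph{Step 4 (iteration).} The two neighbour recollements produced in Step 3 again involve the same three Hom-finite triangulated categories with Serre functors, so Step 2 and Step 3 apply verbatim to them. Iterating in both directions produces an infinite chain of adjoint functors in which every three consecutive rows form a recollement, that is, an unbounded ladder in the sense of \cite{AKLY}.

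The only non-routine point is the iteration in Step 4: one must check that each newly produced recollement inherits the hypotheses needed to run Lemma \ref{Lem:ZZZZ} again. This is automatic because the three outer categories never change up to triangle equivalence and their Serre functors are preserved, so the Hom-finite-with-Serre-functor hypothesis of Lemma \ref{Lem:ZZZZ} holds at every stage of the induction.
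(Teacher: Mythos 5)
Your text does not prove the statement at hand. The statement is Lemma~\ref{Lem:ZZZZ} itself: given Hom-finite triangulated categories with Serre functors and an adjoint pair $(F,G)$, one must verify that $\nu_{\cal C}^{-1}G\nu_{\cal D}$ is left adjoint to $F$ and that $\nu_{\cal D}F\nu_{\cal C}^{-1}$ is right adjoint to $G$. What you have written instead is a proof sketch of Proposition~\ref{Prop:ladder}, in which Lemma~\ref{Lem:ZZZZ} is invoked as a black box (your Step 2); nowhere do you check a single adjunction isomorphism. So the key content of the lemma — the Serre-duality computation producing the new adjoints — is entirely missing. (In the paper the lemma carries no proof either; it is quoted from \cite[Lemma 2.8]{ZZZZ}, and the paper's proof of Proposition~\ref{Prop:ladder} is essentially the argument you sketched.)

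For the record, the lemma has a short direct proof that you should supply: for $X\in\cal D$ and $Y\in\cal C$ one has
\[
\Hom_{\cal C}(\nu_{\cal C}^{-1}G\nu_{\cal D}X,\,Y)\cong D\Hom_{\cal C}(Y,\,G\nu_{\cal D}X)\cong D\Hom_{\cal D}(FY,\,\nu_{\cal D}X)\cong \Hom_{\cal D}(X,\,FY),
\]
using Serre duality in $\cal C$, the adjunction $(F,G)$, and Serre duality in $\cal D$, all isomorphisms being natural in $X$ and $Y$; this exhibits $\nu_{\cal C}^{-1}G\nu_{\cal D}$ as a left adjoint of $F$. Dually,
\[
\Hom_{\cal D}(X,\,\nu_{\cal D}F\nu_{\cal C}^{-1}Y)\cong D\Hom_{\cal D}(F\nu_{\cal C}^{-1}Y,\,X)\cong D\Hom_{\cal C}(\nu_{\cal C}^{-1}Y,\,GX)\cong\Hom_{\cal C}(GX,\,Y),
\]
so $\nu_{\cal D}F\nu_{\cal C}^{-1}$ is right adjoint to $G$. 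Hom-finiteness is what makes the duality $D=\Hom_k(-,k)$ involutive, so the composites of these natural isomorphisms are again isomorphisms. If your intention was to prove Proposition~\ref{Prop:ladder}, your sketch matches the paper's argument, but as a proof of Lemma~\ref{Lem:ZZZZ} it is a non-starter.
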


Now Proposition \ref{Prop:ladder} is immediately from above lemmas.
\begin{proof}[Proof of Proposition \ref{Prop:ladder}]
 Since $A$, $eAe$ and $A_{e}$ are all homologically smooth and proper, so their bounded derived categories admit Serre functors $\nu_{A}$, $\nu_{eAe}$ and $\nu_{A_{e}}$ respectively by Lemma \ref{Lem:Serre}. Since we already have  a recollement \eqref{inducedrecoll}, by Lemma \ref{Lem:ZZZZ}, we can inductively extend it to obtain infinitely many adjoints both upwards and downwards, which gives rise to a ladder by Proposition \ref{Prop:TTF} and  Lemma \ref{Lem:adjrecol}.
\end{proof}

\section{Exceptional sequences, silting and simple-minded collections for graded gentle algebras}

\subsection{ Silting reductions and simple-minded collection reductions}
In this section we use Theorems \ref{main theorem cutting surfaces} and \ref{Thm:RecollementFukaya} to study silting and simple-minded collection (SMC) reductions.
Recall that an object $P$ of a $k$-linear triangulated category $\T$ is called \emph{pre-silting} if $\Hom_{\T}(P, P[>\hs 0])=0$. We call $P$   \emph{silting}  if it is pre-silting and $\T =\thick_{\T}(P)$. Let $T=\bop_{i=1}^{m}T_{i}$ be a direct sum of indecomposable objects in $\T$. We say $T$ is a \emph{pre-simple-minded-collection} (or \emph{pre-SMC} for short), if  $\Hom_{\T}(T_{i},T_{j})=\delta_{i,j}k$  and $\Hom_{\T}(T_{i},T_{j}[<\hs0])=0$ for all $i, j$. We call $T$ a \emph{simple-minded collection} (or \emph{SMC}) if it is a pre-SMC and $\T=\thick_{\T}(T)$. Let $X\in\T$. We denote by $\Filt(X)$ the smallest extension-closed subcategory of $\T$ containing $X$. Note that if $X$ is pre-silting, then $\Filt(X)$ coincides with $\add X$, the smallest subcategory of $\T$ containing $X$ and closed under direct summands. We write ${}^{\perp_{\T}}X=\{Y\in\T \mid \Hom_{\T}(Y, X)=0\}$ and $X^{\perp_{\T}}=\{Y\in\T \mid \Hom_{\T}(X, Y)=0\}$ (or just write ${}^{\perp}X$ and $X^{\perp}$ for simplicity).

Let $A=kQ/I$ be a graded gentle algebra.
Let $P$ be a pre-silting object of $\per(A)$.
\emph{Silting reduction}, introduced by \cite{IY},
is defined as the Verdier quotient $\U=\per(A)/\thick(P)$.
 Under the conditions,
\begin{enumerate}
\item[(P1)] $\add P$ is covariantly finite in ${}^{\perp_{\per (A)}}P[>\hs 0]$ and contravariantly finite in $P[<\hs 0]^{\perp_{\per(A)}}$,
\item[(P2)] For any $X\in \per (A)$, we have $\Hom_{\per(A)}(X, P[l])=0=\Hom_{\per(A)}(P, X[l])$ for $l\gg 0$,
\end{enumerate}
we can realize $\U$ as a certain subfactor category $\frac{\cal Z}{[\add P]}$ of $\per(A)$, where $\cal Z:=({}^{\perp}P[>\hs 0])\cap (P[<\hs 0]^{\perp})$ (see \cite[Theorem 3.1]{IY}), and moreover,
there is a bijection between  silting objects in $\U$ and those in $\per(A)$ containing $P$ (\cite[Theorem 3.7]{IY}).

Let $T$ be a pre-SMC of $\D^{\rm b}(A)$, the \emph{SMC reduction} of $\D^{\rm b}(A)$ with respect to $T$ is defined as $\cal V=\D^{\rm b}(A)/\thick(T)$ (see \cite{J}). Similarly to silting reduction, under the conditions,
\begin{enumerate}
\item[(R1)] $\Filt(T)$ is covariantly finite in $T[>\hs 0]^{\perp_{\D^{\rm b}(A)}}$ and contravariantly finite in ${}^{\perp_{\D^{\rm b}(A)}}T[<\hs 0]$,
\item[(R2)] For any $X\in \D^{\rm b}(A)$, we have $\Hom_{\D^{\rm b}(A)}(X, T[l])=0=\Hom_{\D^{\rm b}(A)}(T, X[l])$ for $l\ll 0$,
\end{enumerate}
we can realize $\cal V$ as a certain subcategory $\cal S:=({}^{\perp}T[<\hs 0])\cap (T[>\hs 0]^{\perp})$ of $\D^{\rm b}(A)$ and
there is a bijection between SMCs in $\cal V$ and SMCs in $\D^{\rm b}(A)$ containing $T$ (\cite[Theorem 3.1]{J}).

It is known that reductions can be realized as the perfect derived categories of dg categories up to direct summands (\cite[Theorem 3.8]{Keller06}). But in general, it is not easy to describe these dg categories clearly. However, in the case of graded gentle algebras we have the following result which directly follows from Theorem \ref{main theorem cutting surfaces}.

\begin{Thm}\label{Thm:siltreduc}
 Let $A$ be a graded gentle algebra  corresponding to a graded surface dissection  $(S, M, \Delta)$ and let $e:=\sum_{i=1}^{m}e_{i}$. Let $P=eA$  be the corresponding projective dg $A$-module and let $T=\Top(eA)$ be the corresponding semi-simple dg $A$-module.
\begin{enumerate}[\rm (1)]
\item Assume $A$ is proper and $P$ is  pre-silting, then the silting reduction $\per(A) /\thick(P)$ is equivalent to $\per(A_{e})$ . Moreover, it is obtained by cutting $(S,M)$ along the arcs corresponding to $e$, and there is a bijection
\[ \{\mbox{silting objects in $\per(A_{e})$}\} \overset{1:1}{\longleftrightarrow} \{ \mbox{silting objects in $\per(A)$ containing $P$}\}.\]
\item Assume $A$ and $(1-e)A(1-e)$ are homologically smooth and proper, and  $T$ is a pre-simple minded collection of $\D^{\rm b}(A)$, then the simple minded collection reduction $\D^{\rm b}(A)/\thick(T)$ is equivalent to $\D^{\rm b}((1-e)A(1-e))\simeq \D^{\rm b}((A^{!})_{e})$.  Moreover,  it is obtained by cutting $(S,M)$ along the arcs corresponding to $e$ in the dual dissection  and there is a bijection
\[ \{\mbox{SMCs in $\D^{\rm b}((A^{!})_{e})$}\} \overset{1:1}{\longleftrightarrow} \{ \mbox{SMCs in $\D^{\rm b}(A)$ containing $T$}\}.\]
\end{enumerate}
\end{Thm}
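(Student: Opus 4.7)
The strategy is to deduce both statements from the recollement of Theorem~\ref{Thm:recollement1}, restricted to $\per$ (via Theorem~\ref{Thm:Ne}) in part~(1) and to $\D^{\rm b}$ (via Proposition~\ref{Thm:Fukaya}) in part~(2), combined with the abstract silting and SMC reduction theorems of Iyama--Yang~\cite[Theorem~3.7]{IY} and Jin~\cite[Theorem~3.1]{J}. The geometric descriptions of the reductions will then follow directly from Theorem~\ref{main theorem cutting surfaces}.

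For part~(1) I would apply Theorem~\ref{Thm:recollement1} to the idempotent $e$ to obtain $\D(A_{e}) \xra{i_{*}} \D(A) \xra{j^{*}=?\ot_{A}^{\bf L}Ae} \D(eAe)$. By Theorem~\ref{Thm:Ne}, this restricts, up to direct summands, to a short exact sequence $0 \la \per(A_{e}) \la \per(A) \la \per(eAe) \la 0$ in which $j_{!}(eAe) = eAe\ot_{eAe}^{\bf L}eA = eA = P$. Hence $\thick(P) = j_{!}(\per(eAe))$ and the Verdier quotient $\per(A)/\thick(P)$ is equivalent (up to idempotent completion) to $\per(A_{e})$, and by Theorem~\ref{main theorem cutting surfaces} this corresponds to cutting $(S,M,\zD)$ along the arcs indexed by $e$. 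To apply~\cite[Theorem~3.7]{IY} I must verify the approximation conditions (P1) and (P2). Condition (P2) is immediate from properness of $A$: the Hom-groups $\Hom_{\per(A)}(X, P[l])$ are built from finitely many shifts of the finite-dimensional graded algebra $\H^{*}(A)$ and therefore vanish for $|l|\gg 0$. Condition (P1) follows from the pre-silting hypothesis $\Hom(P, P[>0])=0$ together with standard Koszul-type truncation arguments for objects of $\per(A)$ with respect to $\add P$, so the bijection is then a direct consequence of~\cite[Theorem~3.7]{IY}.

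For part~(2) I would apply Theorem~\ref{Thm:recollement1} instead to the complementary idempotent $1-e$, yielding $\D(A_{1-e}) \xra{i_{*}} \D(A) \xra{j^{*}=?\ot_{A}^{\bf L}A(1-e)} \D((1-e)A(1-e))$. Under the hypothesis that $A$ and $(1-e)A(1-e)$ are both homologically smooth and proper, Proposition~\ref{Thm:Fukaya} forces $A_{1-e}$ to be so as well and restricts the recollement to bounded derived categories. The last sentence of Theorem~\ref{Thm:recollement1} gives $i_{*}(\Top(e_{j}A_{1-e})) = \Top(e_{j}A) = S_{j}$ for $j\in\{1,\dots,m\}$, so every summand of $T$ lies in $i_{*}\D^{\rm b}(A_{1-e})$. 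Since $A_{1-e}$ is a proper graded gentle algebra it is finite-dimensional (as a dg algebra its total cohomology is $A_{1-e}$ itself up to quasi-isomorphism), so its simples generate $\D^{\rm b}(A_{1-e}) = \per(A_{1-e})$ as a thick subcategory, giving $\thick(T) = i_{*}\D^{\rm b}(A_{1-e}) = \ker(j^{*})$. The recollement then yields the triangle equivalence $\D^{\rm b}(A)/\thick(T) \simeq \D^{\rm b}((1-e)A(1-e))$, and the Koszul-dual identification $\D^{\rm b}((1-e)A(1-e)) \simeq \D^{\rm b}((A^{!})_{e})$ is Propositions~\ref{Prop:Koszul}(2) and~\ref{Prop:Extdual}. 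The bijection of SMCs is then~\cite[Theorem~3.1]{J}, subject to verifying conditions (R1) and (R2), which I would check analogously to (P1), (P2). The surface interpretation is Theorem~\ref{main theorem cutting surfaces} applied to the subset of arcs in the dual dissection $\zD^{*}$ indexed by $e$.

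The principal obstacle, as usual for reduction theorems of this kind, is the approximation conditions (P1) and (R1), which are typically the technical heart of silting and SMC reduction and do not formally follow from the recollement; they must be extracted from the explicit structure of pre-silting/pre-SMC objects together with the properness and smoothness hypotheses. A secondary subtlety is the identification of $\thick(T)$ with $i_{*}\D^{\rm b}(A_{1-e})$ in part~(2), which rests on properness forcing the dg algebra $A_{1-e}$ to have finite-dimensional cohomology so that its simples indeed generate its bounded derived category.
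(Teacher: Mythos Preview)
Your overall strategy is exactly that of the paper: use the recollement of Theorem~\ref{Thm:recollement1} (restricted via Theorem~\ref{Thm:Ne} for part~(1) and via Proposition~\ref{Thm:Fukaya} for part~(2), applied to the complementary idempotent $1-e$) to identify the Verdier quotients, and then invoke \cite[Theorem~3.7]{IY} and \cite[Theorem~3.1]{J} for the bijections, with Theorem~\ref{main theorem cutting surfaces} furnishing the surface picture. The identification $\thick(T)=i_{*}\D^{\rm b}(A_{1-e})$ and the use of Propositions~\ref{Prop:Koszul} and~\ref{Prop:Extdual} are likewise correct and match the paper. There are, however, three points where your sketch diverges from, or falls short of, what is actually needed.

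\textbf{(P1) is easier than you suggest.} No ``Koszul-type truncation'' is required. Since $A$ is proper, $\per(A)$ is Hom-finite and Krull--Schmidt, and in any such category $\add P$ is automatically functorially finite for any $P$; in particular it is covariantly finite in ${}^{\perp}P[>\hs0]$ and contravariantly finite in $P[<\hs0]^{\perp}$. This is how the paper disposes of (P1) in one line.

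\textbf{Idempotent completion in part~(1).} Theorem~\ref{Thm:Ne} only gives the equivalence $\per(A)/\thick(P)\simeq\per(A_{e})$ \emph{up to direct summands}, as you note, and you must still check that the quotient is itself idempotent complete. The paper handles this by invoking \cite[Theorem~3.1]{IY} to identify $\per(A)/\thick(P)$ with the subfactor $\cal Z/[\add P]$ of $\per(A)$, and then observing that idempotents in $\cal Z/[\add P]$ lift to $\cal Z$, which is idempotent complete because $\per(A)$ is Hom-finite and Krull--Schmidt.

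\textbf{(R1) is not analogous to (P1); this is the genuine gap.} For SMC reduction one needs $\Filt(T)$, not $\add T$, to be functorially finite, and Hom-finiteness of $\D^{\rm b}(A)$ is not enough for this. The paper proves a separate Lemma~\ref{Lem:R1} in two steps. First, the recollement~\eqref{functoria} gives two $t$-structures with aisle/coaisle $i_{*}\D^{\rm b}(A_{1-e})=\thick(T)$, so $\thick(T)$ is functorially finite in $\D^{\rm b}(A)$. Second, one must still show that $\Filt(T)$ is functorially finite \emph{inside} $\thick(T)\simeq\D^{\rm b}(A_{1-e})$. Here the pre-SMC hypothesis enters: $T'=\bigoplus S'_{i}$ is a SMC in $\D^{\rm b}(A_{1-e})$, so by Proposition~\ref{Prop:Koszuldual} the dual $(A_{1-e})^{!}$ is non-negative with semisimple $\h^{0}$, whence $A_{1-e}$ is a \emph{non-positive} graded gentle algebra. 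One then cites \cite[Proposition~3.11]{J} to conclude that $\Filt(T')$ is functorially finite in $\D^{\rm b}(A_{1-e})$. Your proposal does not supply this argument, and without the non-positivity of $A_{1-e}$ there is no general reason for $\Filt(T)$ to admit approximations.
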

\begin{Rem}
Let  $A$ be a  homologically smooth and proper graded gentle algebra corresponding to a graded surface dissection $(S,M,\Delta)$. Let $\zG$ be a collection of pairwise non-intersecting $\gpoint$-arcs in $(S,M)$ corresponding  to an object $X\in \D^{\rm b}(A)=\per A$. If $X$ is pre-silting, then the silting reduction is also given by cutting the surface along $\zG$. In this case, $X$ does  not necessarily corresponds to a projective dg $A$-module.
\end{Rem}

The following two lemmas guarantee conditions (P1), (P2) and (R1), (R2) mentioned above hold, which are necessary for us.
\begin{Lem}\label{Lem:P2R2}
Let $A$ be a proper dg $k$-algebra. For any $X,Y\in \per(A)$, we have $\Hom_{\per(A)}(X, Y[l])=0=\Hom_{\per(A)}(Y, X[l])$ for $l\gg0$ and for $l\ll 0$.
\end{Lem}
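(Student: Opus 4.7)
The plan is to reduce both vanishing statements to the base case $X=A$ (or $Y=A$) by a standard thick subcategory argument, then handle the base case directly using properness.

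First I would observe that properness of $A$ gives the inclusion $\per(A)\subseteq \D^{\bb}(A)$ (indeed, this is essentially the definition recorded just before Lemma~\ref{Lem:Db=per}), so every $X,Y\in\per(A)$ has finite-dimensional total cohomology $\bigoplus_n \h^n(X)$, $\bigoplus_n \h^n(Y)$. In particular, $\h^l(Y)=0$ for $|l|\gg 0$, and similarly for $X$.

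Next, to prove that $\Hom_{\per(A)}(X,Y[l])=0$ for $|l|\gg 0$, I would fix $Y\in\per(A)$ and consider
\[ \cal X_Y := \{ X\in \per(A) \mid \Hom_{\per(A)}(X, Y[l])=0 \text{ for } |l|\gg 0 \}. \]
The base case $A\in \cal X_Y$ holds because $\Hom_{\per(A)}(A,Y[l])=\h^l(Y)$, which vanishes for $|l|$ large by the previous paragraph. Then I would verify that $\cal X_Y$ is a thick subcategory of $\per(A)$: it is closed under shifts (obvious), under triangles (from the long exact Hom sequence, since the union of two finite-exclusion ranges is still bounded), and under direct summands (since $\Hom(X_1\oplus X_2, Y[l])=0$ forces $\Hom(X_i,Y[l])=0$). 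Since $\per(A)=\thick(A)$, this forces $\cal X_Y=\per(A)$, giving the first vanishing.

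The second vanishing $\Hom_{\per(A)}(Y,X[l])=0$ for $|l|\gg 0$ follows by exactly the symmetric argument, this time fixing $Y$ as the first argument and varying $X$ through the thick subcategory $\cal Y_Y:=\{X\in\per(A)\mid \Hom_{\per(A)}(Y,X[l])=0 \text{ for } |l|\gg 0\}$, whose base case $A\in\cal Y_Y$ again reduces to $\Hom_{\per(A)}(Y,A[l])$; here one can either use that $Y\in\per(A)$ is built from finitely many copies of shifts of $A$ (so this Hom is a finite iterated extension of groups of the form $\h^{l-k}(A)$, which vanish for $|l|$ large by properness) or equivalently run the thick subcategory argument with roles of $X$ and $Y$ interchanged. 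No step should present an obstacle; the only subtlety is remembering that properness alone gives $\per(A)\subseteq\D^{\bb}(A)$ (we do not need homological smoothness), which is what makes the base case work.
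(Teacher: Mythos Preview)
Your proposal is correct and follows essentially the same approach as the paper: establish the base case using $\Hom_{\per(A)}(A,A[i])=\h^i(A)$ and properness, then propagate to all of $\per(A)=\thick(A)$ via a thick subcategory argument. The paper compresses this into two sentences, while you spell out the closure properties and run the two-variable extension sequentially; the content is the same.
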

\begin{proof}
Note that we have $\Hom_{\per(A)}(A,A[i])=\h^{i}(A)$. Since $A$ is proper, the assertion holds for $X=Y=A$. So it also holds for any $X, Y\in \thick(A)=\per (A)$.
\end{proof}

\begin{Lem}\label{Lem:R1}
Let $A$ be a graded gentle algebra which is  homologically smooth and proper and let $e:=\sum_{i=1}^{m}e_{i}$. Let $T=\Top(eA)$ be the corresponding semi-simple dg $A$-module.  Assume $T$ is a pre-SMC in $\D^{\rm b}(A)$ and $(1-e)A(1-e)$ is homologically smooth. Then $\Filt(T)$ is functorially finite in $\D^{\rm b}(A)$.
\end{Lem}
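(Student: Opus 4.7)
The plan is to transfer the functorial finiteness question to $\D^{\rm b}(A_{1-e})$ using a recollement. First, I would verify that $A_{1-e}$ is itself homologically smooth and proper: smoothness of $A_{1-e}$ follows from smoothness of $A$ (see the lemma preceding Lemma~\ref{Lem:AeAe}), while Lemma~\ref{Lem:AeAe} applied with $e$ replaced by $1-e$, combined with the hypothesis that $(1-e)A(1-e)$ is homologically smooth, shows that $A_{1-e}$ is proper. Hence Proposition~\ref{Thm:Fukaya} applied to the idempotent $1-e$ produces the recollement
\[
\xymatrixcolsep{4pc}\xymatrix{
\D^{\rm b}(A_{1-e}) \ar[r]^{i_{*}} & \D^{\rm b}(A) \ar@/_1.5pc/[l]_{i^{*}} \ar@/^1.5pc/[l]^{i^{!}} \ar[r] & \D^{\rm b}((1-e)A(1-e)) \ar@/_1.5pc/[l] \ar@/^1.5pc/[l]
}
\]
in which, by the last assertion of Theorem~\ref{Thm:recollement1}, the functor $i_{*}$ sends $\Top(e_{j}A_{1-e})$ to $T_{j}=\Top(e_{j}A)$ for each $j=1,\dots,m$.

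Setting $T':=\bigoplus_{j=1}^{m}\Top(e_{j}A_{1-e})$, so that $i_{*}(T')=T$, the fact that $i_{*}$ is fully faithful and triangulated gives $\Filt_{\D^{\rm b}(A)}(T)=i_{*}(\Filt_{\D^{\rm b}(A_{1-e})}(T'))$. The adjunctions then reduce the problem to $\D^{\rm b}(A_{1-e})$: for $X\in\D^{\rm b}(A)$ and $N\in\Filt(T')$, the standard isomorphisms
\[
\Hom_{\D^{\rm b}(A)}(X, i_{*}N)\cong\Hom_{\D^{\rm b}(A_{1-e})}(i^{*}X,N),\quad \Hom_{\D^{\rm b}(A)}(i_{*}N,X)\cong\Hom_{\D^{\rm b}(A_{1-e})}(N,i^{!}X)
\]
show that a left (respectively right) approximation of $X$ in $\Filt(T)$ is obtained by applying $i_{*}$ to a left (respectively right) approximation of $i^{*}X$ (respectively $i^{!}X$) in $\Filt(T')$ inside $\D^{\rm b}(A_{1-e})$. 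Note that by Proposition~\ref{Prop:D^{b}} the adjoints $i^{*}$ and $i^{!}$ restrict to bounded derived categories.

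It therefore remains to show that $\Filt(T')$ is functorially finite in $\D^{\rm b}(A_{1-e})$. Since $i_{*}$ is a fully faithful triangle functor, the pre-SMC property of $T$ in $\D^{\rm b}(A)$ transfers to $T'$ in $\D^{\rm b}(A_{1-e})$. Moreover $T'$ is the sum of all simple tops of indecomposable projectives of $A_{1-e}$, so it generates $\D^{\rm b}(A_{1-e})=\per(A_{1-e})$ as a thick subcategory; hence $T'$ is in fact an SMC of $\D^{\rm b}(A_{1-e})$ and $\Filt(T')$ is the heart of the induced bounded $t$-structure, which is a finite-length abelian category with finitely many simples. Since $A_{1-e}$ is smooth and proper, $\D^{\rm b}(A_{1-e})$ is Hom-finite and admits a Serre functor by Lemma~\ref{Lem:Serre}. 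Exploiting these structures, the required approximations are produced from the $t$-structure truncations combined with the projective covers and injective envelopes available inside the finite-length heart.

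The main obstacle is this last step: the heart of a bounded $t$-structure is not in general functorially finite in its ambient triangulated category, so the argument has to invoke the specific features of the situation---the smoothness and properness of $A_{1-e}$ (which together give Serre duality and Hom-finiteness on $\D^{\rm b}(A_{1-e})$) and the SMC property of $T'$---to construct the approximations explicitly. Everything else in the proof is formal manipulation of the recollement and its adjunctions.
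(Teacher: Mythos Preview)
Your reduction strategy matches the paper's: set up the recollement of Proposition~\ref{Thm:Fukaya} for the idempotent $1-e$, identify $i_{*}(\Filt(T'))$ with $\Filt(T)$, and use the adjunctions $(i^{*},i_{*},i^{!})$ to transport the approximation problem into $\D^{\rm b}(A_{1-e})$. (The paper phrases this as first showing $\thick(T)=i_{*}(\D^{\rm b}(A_{1-e}))$ is functorially finite in $\D^{\rm b}(A)$ via the two $t$-structures of the recollement, then reducing to $\Filt(T')$ functorially finite in $\D^{\rm b}(A_{1-e})$; your direct adjunction argument is equivalent.)

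The gap you flagged is real, and your proposed fix---Serre duality plus truncations and projective covers/injective envelopes in the heart---does not close it without further input. The paper resolves this step not abstractly but by extracting a structural consequence of the SMC hypothesis: since $T'$ is an SMC in $\D^{\rm b}(A_{1-e})$, Proposition~\ref{Prop:Koszuldual} gives
\[
(A_{1-e})^{!}\ \cong\ \bigoplus_{t\in\Z}\Hom_{\D^{\rm b}(A_{1-e})}(T',T'[t]),
\]
and the SMC conditions force the right-hand side to be concentrated in non-negative degrees with semisimple degree-$0$ part. Translating back through the quadratic dual (arrows of $(A_{1-e})^{!}$ sit in degree $1-|\alpha|$), this says every arrow of $A_{1-e}$ has degree $\le 0$, i.e.\ $A_{1-e}$ is a \emph{non-positive} graded gentle algebra. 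For such algebras the simples form the heart of the standard $t$-structure, and \cite[Proposition~3.11]{J} gives directly that $\Filt(T')$ is functorially finite in $\D^{\rm b}(A_{1-e})$. So the missing ingredient is not Serre duality but the observation that the pre-SMC assumption on $T$ pins down the sign of the grading on $A_{1-e}$, after which an existing result applies.
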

\begin{proof}
Let $e=e_{1}+\cdots+e_{m}$ and let $e'=1-e$. Since $e'Ae'$ is proper   and by our assumption, it is  homologically smooth, we have that $A_{e'}$ is also homologically smooth and proper by Proposition \ref{Thm:Fukaya}.
 Then we have a recollement by Proposition \ref{Thm:Fukaya}
\begin{equation}
\label{functoria}
\xymatrixcolsep{4pc}\xymatrix{
\D^{\rm b}(A_{e'}) \ar[r]^{i_{*}=i_{!}}
&\D^{\rm b}(A) \ar@/_1.5pc/[l]_{i^{*}} \ar@/^1.5pc/[l]^{i^{!}}
 \ar[r]^{j^{*}=j^{!}}
&\D^{\rm b}(e'Ae') \ar@/^-1.5pc/[l]_{j_{!}} \ar@/^1.5pc/[l]^{j_{*}}
  }. \end{equation}
 Moreover, $i_{*}$ sends $S_{i}':=\Top(e_{i}A_{e'})$ to $S_{i}$ for $1\le i\le m$. So we have an equivalence $i_{*}: \D^{\rm b}(A_{e'})\ra \thick(T)$.
 By \eqref{functoria}, there are two $t$-structures
 \[\D^{\rm b}(A)=j_{!}(\D^{\rm b}(e'Ae'))\perp i_{*}(\D^{\rm b}(A_{e'}))= i_{*}(\D^{\rm b}(A_{e'}))\perp j_{*}(\D^{\rm b}(e'Ae')), \]
 which implies that $\thick(T)=i_{*}(\D^{\rm b}(A_{e'}))$ is functorially finite in $\D^{\rm b}(A)$.

 Next we show that $\Filt(T)$ is functorially finite in $\thick(T)$, then it is functorially finite in $\D^{\rm b}(A)$ and we are done.  Since $T$ is a pre-SMC in $\D^{\rm b}(A)$, it is a SMC in $\thick(T)$ and  moreover, $T':=\bop_{i=1}^{m}S_{i}'$ is a SMC in $\D^{\rm b}(A_{e'})$. Then by Proposition \ref{Prop:Koszuldual}, we have
 $(A_{e'})^{!}\cong \bop_{t\in \Z}\Hom_{\D^{\rm b}(A_{e'})}(T', T'[t])$ is a non-negative dg algebra with $\h^{0}((A_{e'})^{!})$ semisimple. So $A_{e'}$ is a non-positive graded gentle algebra. Thus $\Filt(T')$ is functorially finite in $\D^{\rm b}(A_{e'})$ by \cite[Proposition 3.11]{J} and morevoer, $\Filt(T)=i_{*}(\Filt(T'))$ is functorially finite in $\thick(T)$. So the assertion holds.
\end{proof}

Now we show Theorem \ref{Thm:siltreduc}.
\begin{proof}[Proof of Theorem \ref{Thm:siltreduc}]
(1) Let $e=e_{1}+\cdots+e_{m}$. Assume $P=eA$ is a pre-silting object in $\per(A)$. Then
 by Theorems \ref{Thm:Ne} and \ref{Thm:recollement1}, the functor $i^{*}$ in \eqref{mainrecoll} induces a triangle equivalence  $\overline{i^{*}}: \per(A)/j_{!}(\per(eAe)) =\per(A)/\thick(P) \simeq \per(A_{e})$ up to direct summands. Since $A$ is proper, $\per (A)$ is Hom-finite and moreover, $\add P$ is functorially finite in $\per (A)$. So (P1) holds. Note that (P2) follows from Lemma \ref{Lem:P2R2}. So we only need to show   $\per(A)/\thick(P)$ is idempotent complete, then $\overline{i^{*}}$ is an equivalence and the bijection is given by \cite[Theorem 3.7]{IY}.

 We know that $\per (A)/\thick(P)$ is equivalent to the subfactor category $\frac{\cal Z}{[\add P]}$ of $\per (A)$, where $\cal Z:=({}^{\perp}P[>\hs 0])\cap (P[<\hs 0]^{\perp})$ (\cite[Theorem 3.1]{IY}). Since $\per (A)$ is idempotent complete and Hom-finite, so is $\cal Z$. Since  it is well-known that any idempotent morphism in $\frac{\cal Z}{[\add P]}$ can be lifted to an idempotent morphism in $\cal Z$, which means $\frac{\cal Z}{[\add P]}$ is idempotent complete, we have $\per(A)/\thick(P)$ is idempotent complete. So the assertion is true.

 (2) Let $e'=1-e_{1}-\cdots-e_{m}$.  Assume $T=\bop_{i=1}^{m}S_{i}$ is a pre-SMC in $\D^{\rm b}(A)$.
 By Propositions \ref{Prop:Extdual} and \ref{Prop:Koszul}, and the recollement \eqref{functoria}, we have the following equivalences
\[  \D^{\rm b}(A)/\thick(T)=\D^{\rm b}(A)/i_{*}(\D^{\rm b}(A_{e'}))\xra{\simeq} \D^{\rm b}(e'Ae') \xra{\simeq} \D^{\rm b}((A^{!})_{e}).\]
Since by Lemmas \ref{Lem:P2R2} and \ref{Lem:R1}, (R1) and (R2) hold, the assertion follows from \cite[Theorem 3.1]{J}.
\end{proof}

\subsection{Existence of  full exceptional sequences }\label{Sec:Exceptional}
In this section, we study the existence of full exceptional sequences  in $\D^{\rm b}( A)$
 for a graded gentle algebra $A$, which  is homologically smooth and proper.
 Exceptional sequences originate in algebraic geometry \cite{GR, Ru} and are instrumental in the construction of the Fukaya-Seidel categories \cite{Seidel}.
Furthermore, exceptional sequences give rise to semi-orthogonal decompositions. Semi-orthogonal decompositions of gentle algebras are studied in \cite{KS22} in terms of surface cuts.
 We begin by recalling the definitions of exceptional sequences.
 An object $X$ in $\D^{\rm b}(A)$ is \emph{exceptional} if $\Hom_{\D^{\rm b}(A)}(X,X[\neq 0]) = 0$ and $\Hom_{\D^{\rm b}(A)}(X,X) = k$. We call a sequence $(X_1, \ldots, X_n)$ \emph{exceptional} if each $X_i$ is an exceptional object in $\D^{\rm b}(A) $ and if $\Hom_{\D^{\rm b}(A)}(X_j,X_i[\Z]) = 0$ for all $i < j$.  An exceptional sequence is \emph{full} if $\thick_{\D^{\rm b}(A)} ( \bigoplus X_i) = \D^{\rm b}(A)$.

 We now define a special family of graded gentle algebras. Namely,
let $Q^{(n)}$ be the following quiver
 \begin{equation}
 \label{equ:An}
\xymatrix{1 \ar@<1pc>[r]^{\za_{1}} \ar@<-1pc>[r]^{\zg_{1}}& 2 \ar[l]_{\zb_{1}} \ar[r]^{\zd_{1}}& 3 \ar@<1pc>[r]^{\za_{2}} \ar@<-1pc>[r]^{\zg_{2}} & 4 \ar[l]_{\zb_{2}} \ar[r]^{\zd_{2}} & \cdots \ar[r]^{\zd_{n-1}} & 2n-1 \ar@<1pc>[r]^{\za_{n}}  \ar@<-1pc>[r]^{\zg_{n}}
& 2n \ar[l]_{\zb_{n}}}
\end{equation}
and let $I^{(n)}$ be the ideal generated by the set $\{\za_{i}\zb_{i}, \zb_{i}\zg_{i}, \zg_{j}\zd_{j}, \zd_{j}\za_{j+1} \mid 1\le i\le n, 1\le j\le n-1\}$.
Let $A^{(n)}:=kQ^{(n)}/I^{(n)}$ be the graded gentle algebra, whose grading is induced by a map $|\cdot |_{A^{(n)}}: Q^{(n)}_{1}\ra \Z$ (we will use $|\cdot|$ for simplicity). Write $a_{i}:=|\za_{i}|+|\zb_{i}|$ and $b_{i}:=|\zb_{i}|+|\zg_{i}|$ for any $1\le i\le n$. The marked surface associate to $A^{(n)}$ is a surface of genus $n$ with exactly one boundary component with one marked point. So all curves in this surface correspond to a loop or a closed curve.

Note that in this section, whenever we say that two graded algebras $A$ and $B$ are derived equivalent, we mean that  $\D^{\rm b}(A)$ and $\D^{\rm b}(B)$ are equivalent. We need the following observation.

 \begin{Lem}\label{Lem:g11}
 Let $A=kQ/I$ be a graded gentle algebra which is homologically smooth and proper. Let $(S, M, \Delta)$ be the graded marked surface associated with $A$. Let $\ell_{i}\in \Delta$  be the arc corresponding to vertex $i\in Q_{0}$.
 Then the following are equivalent.
 \begin{enumerate}[\rm (1)]
 \item $e_{i}Ae_{i}\not =k$ for any $i\in Q_{0}$,
 \item $\ell_{i}$ is a loop for any $i\in Q_{0}$,
 \item $(S,M)$ has only one boundary and one marked $\gpoint$-point.
 \end{enumerate}
 Moreover, if  one of the three holds then   $A$ is derived equivalent to a graded gentle algebra of the form $A^{(n)}$, for some $n\ge 1$. \end{Lem}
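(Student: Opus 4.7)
The plan is to first establish (1) $\Leftrightarrow$ (2) via a local combinatorial analysis of arrows at a common marked point, then show (2) $\Leftrightarrow$ (3) by analyzing how the loop-only hypothesis interacts with the polygon decomposition and the connectivity of $\partial S$, and finally deduce the moreover statement by classifying the resulting marked surface and invoking derived invariance under changes of dissection.

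For (1) $\Leftrightarrow$ (2), I would begin by recording the following basic fact about the gentle algebra $A(\zD)$: each arrow $\za\colon \ell_a \to \ell_b$ in $Q$ is localised at a specific common endpoint $p$ of $\ell_a$ and $\ell_b$, and two composable arrows $\za\zb$ with $\za\colon \ell_a \to \ell_b$ at $p$ and $\zb\colon \ell_b \to \ell_c$ at $q$ satisfy $\za\zb \notin I$ if and only if $p=q$. Hence a nonzero closed path $\za_1\za_2\cdots\za_n$ at vertex $i$ has all of its arrows at a single marked point $p$ which is an endpoint of $\ell_i$. At such a boundary marked point $p$ (there are no punctures, since $A$ is homologically smooth and proper), the arcs incident to $p$ are arranged linearly (the cyclic order around $p$ is cut open by $\partial S$), and each minimal oriented intersection moves strictly forward in this order. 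If $\ell_i$ is not a loop, then $\ell_i$ occupies a unique position in the order at each of its endpoints, so no nonzero closed path can return to $\ell_i$ and hence $e_iAe_i = k$. Conversely, if $\ell_i$ is a loop at $p$, then $\ell_i$ occupies two distinct positions in the order at $p$, and the sequence of minimal oriented intersections between them yields a nonzero element of $e_iAe_i$.

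For (2) $\Leftrightarrow$ (3), homological smoothness and properness of $A$ ensure that $(S,M)$ has no punctures of either colour, so every marked point lies on $\partial S$. The implication (3) $\Rightarrow$ (2) is immediate. For (2) $\Rightarrow$ (3), I would pick any $\rpoint$-point $r$ and consider the polygon $P$ of the dissection containing $r$; the two $\gpoint$-marked points adjacent to $r$ on $\partial S$ are corners of $P$, and the remainder of $\partial P$ consists of arcs, all of which are loops by hypothesis. Tracing around $\partial P$ one sees that these two $\gpoint$-points must coincide, so the boundary component carrying $r$ contains exactly one $\gpoint$-point and, by the alternation condition, exactly one $\rpoint$-point. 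If there were $b\ge 2$ boundary components, then the $1$-complex $\zD\cup\partial S$, to which $S$ deformation retracts, would be disconnected, since no loop can link two distinct boundary circles; this contradicts the connectedness of $S$. Hence $b=1$ and $(S,M)$ has precisely one $\gpoint$- and one $\rpoint$-marked point.

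For the moreover statement, conditions (2) and (3) identify $(S,M)$ as a genus-$n$ surface with one boundary carrying one $\gpoint$- and one $\rpoint$-marked point for some $n\ge 1$, and any admissible dissection consists of $2n$ loops at the unique $\gpoint$-marked point. Such a dissection can be transformed into the standard dissection whose associated graded gentle algebra is precisely $A^{(n)}$ by a finite sequence of flips, and since the partially wrapped Fukaya category (and hence the bounded derived category) depends only on the graded marked surface (cf.\ \cite{HKK17,LP20}), we obtain $\D^{\rm b}(A)\simeq \D^{\rm b}(A^{(n)})$. The main obstacle is executing the local analysis for (1) $\Rightarrow$ (2) cleanly: the geometric picture of the path ``wrapping around'' $p$ must be translated into the combinatorial fact that a nonzero closed path in $Q$ forces a single-point cyclic traversal which is blocked by the boundary, and it is precisely this linear-versus-cyclic distinction at boundary marked points that rules out nonzero closed paths at non-loop arcs.
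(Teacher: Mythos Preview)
Your argument is correct and, for the equivalence of (1)--(3), considerably more detailed than the paper's own proof, which simply asserts that the equivalences ``directly follow from the definitions.'' Your local analysis of closed paths at a boundary marked point for (1) $\Leftrightarrow$ (2), and your polygon-tracing plus connectivity argument for (2) $\Leftrightarrow$ (3), are exactly the kind of unpacking the paper omits. For the moreover part, your approach coincides with the paper's: both appeal to \cite{HKK17,LP20} to conclude that any two dissections of the same graded marked surface yield derived equivalent algebras, whence $A$ is derived equivalent to some $A^{(n)}$.

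One small technical correction: the claim that $S$ \emph{deformation retracts} onto $\zD\cup\partial S$ is not literally true, since a disk does not deformation retract onto its boundary circle. What you actually need (and what holds) is that $S$ and $\zD\cup\partial S$ have the same set of connected components: each polygon of the dissection is a disk whose boundary is a connected subset of $\zD\cup\partial S$, so attaching it does not merge components. This suffices for your contradiction when $b\ge 2$.
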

  \begin{proof}
 The equivalence of (1)-(3) directly follows from the definitions. Now we assume that (1)-(3) hold. Note $(S,M)$ has a grading as a surface where the grading is induced by the graded gentle algebra $A$, see \cite{LP20}. Since $A$ is homologically smooth and proper, $\D^{\rm b}(A)$ is derived equivalent to the partially wrapped Fukaya category associated to the graded marked surface $(S,M)$, \cite{HKK17, LP20}. Then since the surface associated to an algebra of the form $A^{(n)}$ is as in (3), by \cite{HKK17} there exists a grading on $A^{(n)}$ such that $\D^{\rm b}(A^{(n)})$ and $\D^{\rm b}(A)$ are equivalent  since both are equivalent to the same partially wrapped Fukaya category associated to the graded marked surface $(S,M)$.
 \end{proof}

 The following result is a graded version of   \cite[Theorem A]{CS22}, which follows from the description of exceptional dissections in a marked surface in \cite[Proposition 3.5]{CS22} and Proposition \ref{Thm:Fukaya}.

\begin{Thm}\label{Prop:excep}
Let $\mathcal{W}(S,M,\eta(\zD))$ be a partially wrapped Fukaya category associated to a graded surface dissection $(S,M,\eta(\zD))$.
 Then $\mathcal{W}(S,M,\eta(\zD))$ has a full exceptional sequence if and only if
 $(S,M)$ is not homeomorphic to a marked surface of genus $\ge 1$ with exactly one boundary and one marked point on that boundary.
\end{Thm}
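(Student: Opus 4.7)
The plan is to translate the question into a purely topological one using the characterization from \cite[Proposition 3.5]{CS22}, which asserts that an exceptional admissible $\gpoint$-dissection of $(S,M)$ (in the sense of Definition~\ref{Def:exceptionaldissection}) exists if and only if $(S,M)$ is not the special surface appearing in the theorem. Since this is a topological property of $(S,M)$ alone, the theorem will follow once I establish a correspondence between full exceptional sequences in $\mathcal{W}(S,M,\eta(\zD))$ and exceptional admissible $\gpoint$-dissections of $(S,M)$ graded by $\eta(\zD)$.

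For the ``if'' direction, I will assume $(S,M)$ is not the special surface and apply \cite[Proposition 3.5]{CS22} to fix an exceptional admissible $\gpoint$-dissection $\zD'$ of $(S,M)$, so that $Q(\zD')$ is acyclic. Equipping $\zD'$ with the grading induced by the line field $\eta(\zD)$ gives $\eta(\zD')=\eta(\zD)$, and hence \cite{HKK17,LP20} supply a triangle equivalence $\mathcal{W}(S,M,\eta(\zD)) \simeq \D^{\rm b}(A(\zD'))$. Choosing a topological ordering $v_1,\dots,v_N$ of the vertices of $Q(\zD')$, I will verify that the indecomposable projective dg modules $P_{v_i} := e_{v_i} A(\zD')$, listed as $(P_{v_N}, P_{v_{N-1}}, \dots, P_{v_1})$, form a full exceptional sequence: each $P_{v_i}$ is exceptional since $e_{v_i} A(\zD') e_{v_i} = k$ by acyclicity, semi-orthogonality follows from $\Hom(P_{v_a}, P_{v_b}[\ell]) = (e_{v_b} A(\zD') e_{v_a})_\ell = 0$ for $a<b$ using the topological ordering, and fullness follows from $\thick(A(\zD')) = \per(A(\zD')) = \D^{\rm b}(A(\zD'))$ via Lemma~\ref{Lem:Db=per}, together with homological smoothness and properness of $A(\zD')$.

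For the ``only if'' direction I will argue contrapositively: given a full exceptional sequence $(X_1,\dots,X_N)$ in $\mathcal{W}(S,M,\eta(\zD))$, the goal is to construct an exceptional admissible $\gpoint$-dissection of $(S,M)$, after which \cite[Proposition 3.5]{CS22} forces $(S,M)$ to not be the special surface. Via the geometric model of \cite{HKK17,LP20}, each exceptional object $X_i$ corresponds to a graded embedded arc $\gamma_i$ on the surface with trivial local system, and the semi-orthogonality conditions $\Hom(X_j,X_i[\Z])=0$ for $j>i$ force these arcs to be pairwise non-crossing in the interior of $S$; fullness then ensures that $\{\gamma_1,\dots,\gamma_N\}$ is an admissible $\gpoint$-dissection $\zD'$ of $(S,M)$, and the exceptional ordering provides a topological ordering of $Q(\zD')$ certifying acyclicity. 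The main obstacle in this direction is justifying the geometric translation in the graded setting: one must rule out the possibility that the gradings conspire to move all intersection morphisms outside the integer shift range, but this is impossible since any transversal intersection produces a nonzero morphism in some integer degree, so the vanishing condition over all shifts forces geometric disjointness. This is the same topological content that drives the ungraded treatment of \cite{CS22} and carries over verbatim.
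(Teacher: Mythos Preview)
Your proposal is correct, but both directions take a different route from the paper's proof (stated and proved as Theorem~\ref{Prop:excep1}).

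For the ``if'' direction you and the paper start identically, using \cite[Proposition~3.5]{CS22} to obtain an exceptional dissection $\zD'$ and passing to $A(\zD')$. You then finish by a direct elementary check: a topological ordering of the acyclic quiver $Q(\zD')$ makes the indecomposable projectives a full exceptional sequence. The paper instead invokes the recollement machinery it has developed (Proposition~\ref{Thm:Fukaya}) to peel off one vertex at a time, producing a chain of recollements with right-hand term $\D^{\rm b}(k)$, and then appeals to \cite[Lemma~2.15]{Kalck} to assemble the full exceptional sequence. Your argument is shorter and self-contained; the paper's argument showcases the recollement framework that is the point of the article.

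For the ``only if'' direction the contrast is reversed. The paper's argument is a single observation: on the genus $\ge 1$ surface with one boundary component and one marked point, every admissible curve is a loop or a closed curve, so by \cite{HKK17} every indecomposable object $X$ satisfies $\dim\bigoplus_{i}\Hom(X,X[i])\ge 2$, and hence there is not even a single exceptional object, let alone a full exceptional sequence. Your route---translating the full exceptional sequence back into an exceptional dissection and then applying \cite[Proposition~3.5]{CS22}---works, but it requires several geometric facts (exceptional objects are embedded arcs, interior crossings produce morphisms in both directions, a generating family of pairwise non-crossing arcs of the correct cardinality is automatically an admissible dissection), each of which needs justification. The paper's argument avoids all of this by only needing the non-existence of one exceptional object.
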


Theorem \ref{Prop:excep} can be re-stated as follows.
\begin{Thm}\label{Prop:excep1}
 Let $A$ be a graded gentle algebra which is proper and homologically smooth, then
$\D^{\rm b}(A)$ has a full exceptional sequence if and only if  it is not derived equivalent to $A^{(n)}$ for some $n\ge 1$.
\end{Thm}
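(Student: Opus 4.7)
The plan is to translate the algebraic question into a geometric one via the equivalence $\D^{\rm b}(A)\simeq \cal W(S,M,\eta(\zD))$ of \cite{HKK17, LP20} (valid since $A$ is homologically smooth and proper, forcing $(S,M)$ to have no interior punctures) and then to invoke the explicit construction of \cite[Proposition 3.5]{CS22}.

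First, I would establish that $\D^{\rm b}(A)$ admits a full exceptional sequence if and only if $(S,M)$ admits an exceptional $\gpoint$-dissection $\zD'$ in the sense of Definition \ref{Def:exceptionaldissection}. Given such a $\zD'$, Proposition \ref{Thm:Fukaya} yields a derived equivalence $\D^{\rm b}(A)\simeq \D^{\rm b}(A(\zD'))$, and since $Q(\zD')$ has no loops and no oriented cycles, its vertices admit a linear ordering in which all paths run in one fixed direction; the indecomposable projectives $e_iA(\zD')$ then form a full exceptional sequence in the appropriate order, because for vertices $j$ lying after $i$ in this ordering there is no path (of any degree) between them, forcing $\Hom_{\D^{\rm b}(A(\zD'))}(e_jA(\zD'), e_iA(\zD')[n])$ to vanish for all $n$, while the self-Hom is $k$ concentrated in degree zero. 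Conversely, an exceptional object in $\cal W(S,M,\eta(\zD))$ is represented by an arc with no self-intersection, and the total Hom-vanishing characteristic of an exceptional sequence precludes oriented intersections from later arcs to earlier ones; a full exceptional sequence therefore yields an exceptional dissection.

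For the ``only if'' direction, I would argue by contrapositive: if $A$ is derived equivalent to some $A^{(n)}$ with $n\ge 1$, then by Lemma \ref{Lem:g11} the marked surface $(S,M)$ is of genus $n$ with a single boundary component carrying a single $\gpoint$-marked point. Every non-contractible simple arc on such a surface has both endpoints at this unique marked point and hence a self-intersection there, so no exceptional $\gpoint$-arc, let alone an exceptional dissection, exists, and the correspondence above rules out a full exceptional sequence. For the ``if'' direction, if $(S,M)$ is not such a surface, then \cite[Proposition 3.5]{CS22} exhibits an exceptional $\gpoint$-dissection on $(S,M)$, producing via the correspondence the desired full exceptional sequence.

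The principal technical point will be ensuring the graded refinement of the correspondence: namely that exceptional arcs genuinely correspond to exceptional objects and that Hom-vanishing translates to absence of oriented intersections, independently of the choice of line field $\eta(\zD)$. This holds because an exceptional dissection is a purely combinatorial property of the underlying quiver with relations, and the grading only shifts the cohomological degree in which intersections contribute to morphism spaces without affecting whether these morphism spaces vanish in all degrees simultaneously. The rest of the argument is a direct adaptation of the ungraded case treated in \cite{CS22}.
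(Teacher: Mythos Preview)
Your proof is correct, and the ``only if'' direction matches the paper's: both reduce to the observation that on a surface with a single $\gpoint$-marked point every admissible curve is a loop or closed curve, hence has $\dim\bigoplus_{i}\Hom(X,X[i])\ge 2$ by \cite{HKK17} and cannot be exceptional. For the ``if'' direction the paper proceeds differently: rather than topologically ordering the projectives of $A(\zD')$ directly, it repeatedly applies Proposition~\ref{Thm:Fukaya} to peel off copies of $\D^{\rm b}(k)$ via recollements (using that a source vertex of an acyclic quiver has $e_{i}Ae_{i}=k$, so the right-hand side of \eqref{inducedrecoll} degenerates) and then invokes \cite[Lemma~2.15]{Kalck} to assemble the resulting chain of recollements into a full exceptional sequence. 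Your direct argument is more elementary; the paper's buys consistency with its recollement framework but nothing mathematically essential.

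Two small points. Your citation of Proposition~\ref{Thm:Fukaya} for the equivalence $\D^{\rm b}(A)\simeq\D^{\rm b}(A(\zD'))$ is misplaced: that proposition yields recollements from idempotents, not equivalences between different dissections of the same surface. The correct input is the dissection-independence of $\cal W(S,M,\eta)$ from \cite{HKK17,LP20}, with $\zD'$ graded via the line field $\eta(\zD)$; the paper encodes this by the phrase ``we may assume $\Delta=\Delta'$''. Second, the full correspondence ``full exceptional sequence $\Leftrightarrow$ exceptional dissection'' that you announce is stronger than you need, and the $\Rightarrow$ direction (that the arcs obtained actually form an \emph{admissible} dissection rather than merely a pairwise non-crossing collection of the right cardinality) is not argued. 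Fortunately your ``only if'' argument only uses the weaker implication ``exceptional object $\Rightarrow$ arc without self-intersection'', which you do justify, so no gap results.
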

\begin{proof}
We first show the `if' part.
Let $(S,M, \Delta)$ be the graded marked surface associated to $A=kQ/I$. Since $A$ is not derived equivalent to $A^{(n)}$ for any grading of the arrows, then  by Lemma \ref{Lem:g11} there are at least two marked points on $S$ and thus by \cite[Proposition 3.5]{CS22}, there exists an exceptional  dissection $\Delta'$ of $S$. We may assume $\Delta=\Delta'$ for simplicity.

Let $Q_{0}=\{1, 2,\cdots, n\}$.
 Since $Q$ is a quiver without oriented cycles (see Definition \ref{Def:exceptionaldissection}),  we have $e_{1}Ae_{1}=k$ and by Proposition \ref{Thm:Fukaya}, there is a recollement
 \[\xymatrixcolsep{4pc}\xymatrix{
\D^{\rm b}(A_{e_{1}}) \ar[r]^{F}
&\D^{\rm b}(A) \ar@/_1.5pc/[l]_{F_{\lambda}} \ar@/^1.5pc/[l]_{F_{\sigma}}
 \ar[r]^{?\ot_{A}^{\bf L}Ae_{1}}
&\D^{\rm b}(k), \ar@/^-1.5pc/[l]_{?\ot^{\bf L}_{e_{1}Ae_{1}}e_{1}A} \ar@/^1.5pc/[l]
 } \]
where $A_{e_{1}}=kQ_{e_{1}}/\langle I_{e}\rangle$ with $(Q_{e_{1}})_{0}=\{2,3,\dots, n\}$ and
 $kQ_{e_{1}}$ has no oriented cycles (see Definition \ref{definition:left algebra}).
 Let $A_{0}:=A$ and $A_{1}:=A_{e_{1}}$. We inductively construct a series of recollements

  \[\xymatrixcolsep{4pc}\xymatrix{
\D^{\rm b}(A_{i}) \ar[r]
&\D^{\rm b}(A_{i-1}) \ar@/_1.5pc/[l] \ar@/^1.5pc/[l]_{{}}
 \ar[r]
&\D^{\rm b}(k). \ar@/^-1.5pc/[l] \ar@/^1.5pc/[l]
 } \]
 where $A_{i}:=(A_{i-1})_{e_{i}}$ for $1\le i\le n-1$. Note that $A_{n-1}=k$ by our construction. So we get a full exceptional sequence by \cite[Lemma 2.15]{Kalck}.

 For  the `only if' part, assume $A$ is derived equivalent to $A^{(n)}$ and $\D^{\rm b}(A)$ has a full exceptional sequence. Then $\D^{\rm b}(A^{(n)})$ also has a full exceptional sequence. In particular, there exists an indecomposable object $X$ such that $\Hom_{\D^{\rm b}(A^{(n)})}(X,X[i])=0$ for $i\not=0$ and $\Hom_{\D^{\rm b}(A^{(n)})}(X,X)=k$. Since the surface of $A^{(n)}$ only has one boundary and one marked point, by \cite{HKK17} the admissible curve corresponding to $X$ is a loop or a closed curve. It follows from  \cite{HKK17}
that $\dim \bop_{i\in\Z}\Hom_{\D^{\rm b}(A)}(X, X[i])\ge 2$. It is a contradiction. So $A$ is not derived equivalent to $A^{(n)}$.
 \end{proof}

A full exceptional sequence decomposes the derived category into derived categories of vector spaces. This decomposition in terms of the associated surface can be  seen as repeatedly cutting off discs where the surface dissection is given by a single arc without self-intersection.

\begin{Ex} An example of a  decomposition of $\D^{\rm b}(A)$ as in the proof of Theorem~\ref{Prop:excep}.
Let $Q: 1\xra{\za} 2 \xra{\zb} 3 \xra{\zg} 4$ and let $I=\lan\za\zb, \zb\zg \ran$. Let $A=kQ/I$ be a graded gentle algebra. Then $(P_{4}, P_{3}, P_{2}, P_{1})$ is a full exceptional sequence of $\D^{\rm b}(A)$. And we have the following decomposition of $\D^{\rm b}(A)$:
  \[\xymatrixcolsep{4pc}\xymatrix{
\D^{\rm b}(2\xra{\zb} 3 \xra{\zg} 4) \ar[r]
&\D^{\rm b}(1\xra{\za} 2\xra{\zb} 3 \xra{\zg} 4) \ar@/_1.5pc/[l]^{} \ar@/^1.5pc/[l]_{{}}
 \ar[r]
&\D^{\rm b}(k). \ar@/^-1.5pc/[l] \ar@/^1.5pc/[l]
 } \]
   \[\xymatrixcolsep{4pc}\xymatrix{
\D^{\rm b}(3 \xra{\zg} 4) \ar[r]
&\D^{\rm b}(2\xra{\zb} 3 \xra{\zg} 4) \ar@/_1.5pc/[l]^{} \ar@/^1.5pc/[l]_{{}}
 \ar[r]
&\D^{\rm b}(k). \ar@/^-1.5pc/[l] \ar@/^1.5pc/[l]
 } \]

   \[\xymatrixcolsep{4pc}\xymatrix{
\D^{\rm b}(k) \ar[r]
&\D^{\rm b}(3 \xra{\zg} 4) \ar@/_1.5pc/[l]^{} \ar@/^1.5pc/[l]_{{}}
 \ar[r]
&\D^{\rm b}(k). \ar@/^-1.5pc/[l] \ar@/^1.5pc/[l]
 } \]
 where the relations are given by $\za\zb$ and $\zb\zg$ and the grading of one arrow is the same as its grading in $Q$.

We note that any permutation of the projective indecomposables will give a decomposition of $\D^{\rm b}(A)$ into $\D^{\rm b}(k)$ induced by a full exceptional sequences in $\D^{\rm b}(A)$. Note that the full exceptional sequences in this case,  do not have only projective indecomposables as their terms.
\end{Ex}

\begin{Rem}
 \begin{enumerate}[\rm (1)]
 \item
Let $A$ be a homologically smooth and proper dg algebra (not necessarily gentle). Let $X$ be an exceptional object in $\D^{\rm b}(A)$. We may regard the Verdier quotient  $\cal V = \D^{\rm b}(A)/\thick(X)$ as the \emph{exceptional reduction} of $\D^{\rm b}(A)$ at $X$. Then
there are  bijections between the full exceptional sequences in $\cal V$ and  full exceptional sequences  in $\D^{\rm b}(A)$ ending at $X$, and those in $\D^{\rm b}(A)$ starting at $X$.
\item  If $A$ is a graded gentle algebra which is homologically smooth and proper, then the  exceptional reduction of $\D^{\rm b}(A)$
at $X$ is obtained by cutting the  surface associated to $A$ along the arc corresponding to $X$.
\end{enumerate}
\end{Rem}

\subsection{Existence of silting objects and simple-minded collections}
 Recall the graded gentle algebra $A^{(n)}$ constructed from  the quiver in \eqref{equ:An} in the previous section. Our main result in this section is as follows.

 \begin{Thm}\label{Thm:exist}
Let $\mathcal{W}(S,M,\eta(\zD))$ be a partially wrapped Fukaya category.
 Then $\mathcal{W}(S,M,\eta(\zD))$ has   a  silting object  if and only if it has a simple-minded collection and this is the case if
\begin{enumerate}[\rm (1)]
\item $\mathcal{W}(S,M,\eta(\zD))$ is not triangle equivalent to $\D^{\rm b}(A^{(n)})$, or
\item $\mathcal{W}(S,M,\eta(\zD))$ is  triangle equivalent to $\D^{\rm b}(A^{(n)})$ with $a_{i}\not=1$ or $b_{i}\not=1$ for any $1\le i\le n$.
\end{enumerate}
\end{Thm}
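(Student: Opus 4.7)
First I would establish the bi-implication ``silting exists iff SMC exists'' using a Koszul-type correspondence available in the homologically smooth proper setting. Since $A$ is homologically smooth and proper, $\D^{\rm b}(A) = \per(A)$ is Hom-finite and carries a Serre functor (Lemma~\ref{Lem:Serre}). A silting object $T$ gives a bounded t-structure on $\D^{\rm b}(A)$ whose heart is equivalent to $\mod\,\End_{\D^{\rm b}(A)}(T)$, and the simples of that heart form an SMC. Conversely, given an SMC $T = \bop_{i} T_{i}$, the graded algebra $E := \bop_{t}\Hom_{\D^{\rm b}(A)}(T, T[t])$ plays the role of a quadratic dual (cf.~Proposition~\ref{Prop:Koszuldual}); the analogue of Proposition~\ref{Prop:Extdual} gives $\D^{\rm b}(E) \simeq \D^{\rm b}(A)$, and the tautological silting object $E \in \per(E)$ transports to a silting object in $\D^{\rm b}(A)$.

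For case~(1), Theorem~\ref{Prop:excep} supplies a full exceptional sequence $(E_{1},\ldots,E_{m})$ in $\D^{\rm b}(A)$, which I would turn into a silting object by shifting: choose integers $s_{1}\le s_{2}\le \cdots \le s_{m}$ such that, for each pair $i<j$, the gap $s_{j}-s_{i}$ strictly exceeds the supremum of $\{\ell : \Hom_{\D^{\rm b}(A)}(E_{i}, E_{j}[\ell])\ne 0\}$. This supremum is finite by Lemma~\ref{Lem:P2R2} applied to $E_{i}, E_{j}\in\D^{\rm b}(A)=\per(A)$. Setting $T := \bop_{i} E_{i}[s_{i}]$, the vanishing $\Hom(E_{j}[s_{j}], E_{i}[s_{i}+\ell]) = 0$ for $j>i$ is automatic from exceptionality, the shift choice kills positive Ext for $i<j$, and self-extensions within each $E_{i}$ vanish. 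Since the $E_{i}$'s generate $\D^{\rm b}(A)$, so does $T$, and hence $T$ is silting.

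Case~(2) is the main obstacle. My plan is induction on $n$ via the recollement of Theorem~\ref{Thm:RecollementFukaya}: pick a handle $i$ with $(a_{i},b_{i})\ne (1,1)$ and cut the genus-$n$ surface along its two loops. The resulting recollement has outer terms the Fukaya category of a genus-$(n-1)$ surface (of $A^{(n-1)}$-type, handled by the induction hypothesis) and that of a once-bordered torus with parameters $(a_{i},b_{i})\ne (1,1)$ (the base case). Theorem~\ref{Thm:siltreduc} then glues silting objects from the two sides into a silting object of $\D^{\rm b}(A^{(n)})$. The heart of the argument is the base case $n=1$ with $(a_{1},b_{1})\ne(1,1)$: one must exhibit an admissible graded dissection of the once-bordered torus whose induced grading is non-positive (so that its graded gentle algebra is itself silting). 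This is a surface-combinatorial problem whose solution hinges on classifying line fields on the once-bordered torus and analysing the $\mathrm{GL}_{2}(\Z)$ change-of-dissection action on the pair $(a_{1},b_{1})$, pinpointing the orbit of $(1,1)$ as the sole obstruction---a fact that is the combinatorial counterpart to the explicit hand calculation the authors use to verify the converse direction for $n=1$.
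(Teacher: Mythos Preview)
Your overall architecture matches the paper: the equivalence of silting and SMC is proved abstractly, case~(1) goes via the full exceptional sequence from Theorem~\ref{Prop:excep}, and case~(2) is an induction on $n$ using the recollement splitting off a genus-one piece, with the base case $n=1$ treated separately. The inductive step and the exceptional-sequence-to-silting argument are essentially what the paper does (the latter is \cite[Proposition~3.5]{AI}).

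There is, however, a genuine gap in your SMC $\Rightarrow$ silting direction. The Ext-algebra $E=\bigoplus_{t}\Hom_{\D^{\rm b}(A)}(T,T[t])$ of an SMC is concentrated in \emph{non-negative} degrees, with $E^{0}$ semisimple. Hence $\Hom_{\per(E)}(E,E[n])=E^{n}$ need not vanish for $n>0$, and $E$ is \emph{not} a silting object in $\per(E)$; your ``tautological silting object $E$'' does not exist. Invoking Proposition~\ref{Prop:Extdual} does not help either: that result is specific to the standard simples of a graded gentle algebra, and even then $A^{!}$ is not silting in $\per(A^{!})$ in general. The paper fixes this by passing to the endomorphism \emph{dg} algebra $B=\shEnd(P_{X})$ of a cofibrant replacement of $T$, which is non-negative with $\h^{0}(B)$ semisimple, and then applying \cite[Corollary~4.1]{KN} to conclude that $\h^{0}(B)$ is silting in $\per(B)=\D^{\rm b}(B)\simeq\D^{\rm b}(A)$.

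Your base case $n=1$ is also only a sketch, and takes a different route from the paper. Rather than classifying line fields and invoking a $\mathrm{GL}_{2}(\Z)$-action (which you do not carry out), the paper argues concretely (Lemma~\ref{Lem:not=1}): if $a_{1}\le 0$ or $b_{1}\le 0$ then the recollement at $e_{2}$ has $e_{2}Ae_{2}=k[X]/(X^{2})$ with $|X|=a_{1}\le 0$ (hence silting) and $A_{e_{2}}=k[Y]$ with $|Y|=a_{1}-1<0$ (hence silting), so Theorem~\ref{Thm:siltreduc} yields a silting object for $A$. If instead $a_{1},b_{1}>0$ with $(a_{1},b_{1})\ne(1,1)$, then one of them is $\ge 2$, and passing to $A^{!}$ replaces $(a_{1},b_{1})$ by $(2-a_{1},2-b_{1})$, reducing to the previous case via Proposition~\ref{Prop:Extdual}. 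This two-step trick is the missing concrete ingredient in your plan.
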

 We re-state Theorem \ref{Thm:exist} as  follows.

\begin{Thm}\label{Thm:exist1}
Let $A$ be a graded gentle algebra which is homologically smooth and proper. Then
 $\D^{\rm b}(A)$ has a silting object if and only if it has  a SMC and this is the case if
\begin{enumerate}[\rm (1)]
\item $A$ is not derived equivalent to $A^{(n)}$, or
\item $A$ is derived equivalent to $A^{(n)}$ and $a_{i}\not=1$ or $b_{i}\not=1$ holds for any $1\le i\le n$.
\end{enumerate}
\end{Thm}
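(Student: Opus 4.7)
The plan is to prove the theorem in three parts: the equivalence of silting and SMC, case (1), and case (2) (which is the main obstacle).

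For the equivalence of silting and SMC, since $A$ is homologically smooth and proper, $\D^{\rm b}(A) = \per(A)$ is Hom-finite and Krull--Schmidt, and I would invoke the Koenig--Yang-type bijection: a silting object $T$ yields a bounded $t$-structure with length heart of finite type whose simples form an SMC, and conversely an SMC generates such a $t$-structure whose heart admits a projective generator that is silting. Hence existence of one is equivalent to existence of the other.

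For case (1), Theorem \ref{Prop:excep1} produces a full exceptional sequence $(X_1, \ldots, X_m)$ in $\D^{\rm b}(A)$. I convert it into a silting object by shifting the terms: since $A$ is homologically smooth and proper, for each pair $i < j$ the set $\{ t : \Hom(X_i, X_j[t]) \neq 0 \}$ is bounded above by some $M_{ij}$. Inductively choosing $d_1 = 0$ and $d_j \geq d_i + M_{ij}$ for all $i < j$, the object $T := \bigoplus_i X_i[d_i]$ satisfies $\Hom(T, T[t]) = 0$ for $t > 0$ (pairs with $j < i$ vanish by the exceptional-sequence property, $j = i$ by exceptionality of $X_i$, and $j > i$ by the choice of shifts), and $\thick(T) = \D^{\rm b}(A)$ follows from fullness. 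Hence $T$ is silting.

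For case (2), I construct a silting object in $\D^{\rm b}(A^{(n)})$ directly on the associated graded surface of genus $n$ with one boundary and one marked point. The $2n$ dissection loops give projectives whose endomorphism algebras are $k[x]/(x^2)$ with $|x|$ equal to $b_i$ or $a_i$, so they are pre-silting exactly when the relevant integer is non-positive. The strategy is to find a pre-silting indecomposable $X$ in $\D^{\rm b}(A^{(n)})$, so that cutting along the underlying arc produces a graded marked surface with strictly more than one marked point (since a cut always adds two marked points); Case (1) then supplies a silting object on the cut, which by Theorem \ref{Thm:siltreduc}(1) lifts to a silting completion of $X$ in $\D^{\rm b}(A^{(n)})$. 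The existence of such $X$ is guaranteed by the hypothesis $(a_i, b_i) \neq (1,1)$: if some $a_i$ or $b_i$ is non-positive, the corresponding dissection projective works; otherwise one uses the Koszul duality of Proposition \ref{Prop:Extdual} (which transforms $(a_i, b_i)$ to $(2-a_i, 2-b_i)$) to reduce to a non-positive situation, or, failing that, a string complex attached to a suitably chosen non-dissection arc on the surface whose self-intersection grading is non-positive. The principal technical difficulty is this final sub-case: exhibiting such a string complex under constraints where neither the dissection projectives nor their Koszul duals are pre-silting, which requires a combinatorial analysis of the geometry of loops on the genus-$n$ surface combined with the gradings $(a_i, b_i)$.
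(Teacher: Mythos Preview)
Your treatment of the silting/SMC equivalence and of Case~(1) is fine and matches the paper (the paper cites Proposition~\ref{Prop:silting-SMC} and \cite[Proposition~3.5]{AI} respectively).  The gap is in Case~(2).

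Your plan is to pick a pre-silting dissection projective $e_iA$, cut the surface along the single arc $\ell_i$, and then apply Case~(1) to the cut algebra $A_{e_i}$.  This fails because $A_{e_i}$ is never proper.  Indeed $e_iA^{(n)}e_i\cong k[x]/(x^2)$, and the loop $x$ with $x^2\in I$ is a full relation cycle, so $e_iAe_i$ is not homologically smooth; by Lemma~\ref{Lem:AeAe} this forces $A_{e_i}$ to be non-proper.  Concretely, removing vertex $2i$ creates the new loop $[\alpha_i\beta_i]$ at vertex $2i-1$, and since $\beta_i\alpha_i\notin I$ one has $[\alpha_i\beta_i]^2\notin I_{e_i}$, so $A_{e_i}$ contains a cycle without relations.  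Geometrically the cut surface acquires a $\gpoint$-puncture, so your appeal to Case~(1)---which needs homologically smooth and proper---is illegitimate.  The claim that ``a cut always adds two marked points'' is true but irrelevant: one of the new points lands in the interior.

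The paper avoids this by induction on $n$: take $e=e_1+e_2$ so that $eAe\cong A^{(1)}$ and $A_e\cong A^{(n-1)}$ are \emph{both} homologically smooth and proper, giving a recollement at the level of $\D^{\rm b}$.  For the base case $n=1$ one does cut along a single arc, but then handles the non-proper quotient $A_{e_2}=k[Y]$ with $|Y|=a_1-1<0$ directly: $k[Y]$ is silting in $\per(k[Y])$, and Theorem~\ref{Thm:siltreduc}(1) lifts this.  For the inductive step, $\D^{\rm b}(A^{(1)})$ has a silting object $P'$ by the base case; its image $j_!(P')$ is pre-silting in $\D^{\rm b}(A^{(n)})$ with $\thick(j_!(P'))=\thick(eA)$, and the silting reduction $\per(A^{(n)})/\thick(eA)\simeq\per(A^{(n-1)})$ has a silting object by induction, which lifts.

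Incidentally, your ``principal technical difficulty'' is a phantom.  If every $a_i,b_i\ge 1$ and $(a_i,b_i)\neq(1,1)$, then some $a_i\ge 2$ or $b_i\ge 2$, so its Koszul dual $2-a_i$ or $2-b_i$ is $\le 0$; thus the Koszul dual always has a pre-silting dissection projective and the ``failing that'' sub-case never arises.
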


 The following  shows  the  first part of Theorem \ref{Thm:exist1}.
\begin{Prop}\label{Prop:silting-SMC}
Let $A$ be a homologically smooth and proper dg $k$-algebra. Then $\D^{\rm b}(A)$ has silting objects if and only if it has SMCs.
\end{Prop}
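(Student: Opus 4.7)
The plan is to establish both directions via derived Morita equivalences with dg algebras of appropriate sign conventions, combined with the standard link between non-positive dg algebras, their simple modules, and simple-minded collections (as developed by Koenig--Yang and Keller--Nicol\'as). Throughout I will use that $A$ is homologically smooth and proper, so by Lemma~\ref{Lem:Db=per} we have $\per(A)=\D^{\bb}(A)$.

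For the forward direction, suppose $T\in\D^{\bb}(A)=\per(A)$ is a silting object. I would form the dg endomorphism algebra $B:=\RshHom_A(T,T)$. The silting condition $\Hom_{\D(A)}(T,T[>0])=0$ gives $\h^{n}(B)=0$ for $n>0$, so $B$ is non-positive, while properness of $A$ makes $\h^{0}(B)=\End_{\D(A)}(T)$ finite dimensional. Standard derived Morita theory produces a triangle equivalence $\RshHom_A(T,-)\colon\D(A)\xra{\simeq}\D(B)$ sending $T$ to $B$; since $T$ is a compact generator and $A$ is smooth and proper, $B$ inherits smoothness and properness, and the equivalence restricts to $\D^{\bb}(A)\simeq\D^{\bb}(B)$ by Lemma~\ref{Lem:D|per}. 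For a non-positive dg algebra with finite-dimensional $\h^{0}$, the simple $\h^{0}(B)$-modules, inflated along $B\twoheadrightarrow\h^{0}(B)$, form a simple-minded collection in $\D^{\bb}(B)$ (this is standard, cf.\ \cite{KY}); transporting this SMC back along the equivalence yields an SMC in $\D^{\bb}(A)$.

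For the reverse direction, suppose $(S_{1},\dots,S_{n})$ is an SMC in $\D^{\bb}(A)$. The aim is to realise it as the image of the simples of $\h^{0}(B)$ for some non-positive dg algebra $B$ derived equivalent to $A$; then $B\in\per(B)=\D^{\bb}(B)$ is itself a silting object and its image is silting in $\D^{\bb}(A)$. Such a $B$ is produced by Koszul/bar--cobar duality applied to the (non-negative) dg endomorphism algebra $C:=\RshHom_A(\bigoplus S_{i},\bigoplus S_{i})$: by Proposition~\ref{Prop:Koszuldual} this $C$ is non-negative with $\h^{0}(C)$ semisimple, and its Koszul dual, constructed either via the cobar construction or as the minimal $A_\infty$-model of a projective resolution of the semisimple quotient, is a non-positive dg algebra $B$. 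The defining properties of an SMC (finite Hom-dimensions, generation of $\D^{\bb}(A)$, strong orthogonality in degree zero) together with smoothness and properness of $A$ guarantee that the natural functor $\D(B)\to\D(A)$ is a triangle equivalence sending the standard simples of $B$ to $(S_{1},\dots,S_{n})$. Alternatively, one may invoke the Koenig--Yang bijections \cite{KoY,KY} directly.

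The main obstacle is the converse direction: one must manufacture the non-positive dg model $B$ of $A$ out of the SMC data, and verify that $B$ corresponds under the equivalence to a silting object whose simples (in the sense above) are the prescribed $S_{i}$. Once the equivalence $\D^{\bb}(A)\simeq\D^{\bb}(B)$ with $B$ non-positive is in place, smoothness and properness force $\per(B)=\D^{\bb}(B)$, so $B$ itself is a silting object in $\D^{\bb}(A)$, completing the proof.
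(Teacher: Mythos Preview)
Your forward direction is essentially identical to the paper's: form the dg endomorphism algebra of the silting object, observe it is non-positive, transfer the equivalence to $\D^{\bb}$ via Lemma~\ref{Lem:D|per}, and read off the SMC from the simple modules of $\h^{0}$.

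Your reverse direction, however, takes a detour that the paper avoids, and contains a gap. You propose to pass from the non-negative endomorphism algebra $C=\RshHom_{A}(\bigoplus S_{i},\bigoplus S_{i})$ to a \emph{non-positive} Koszul dual $B$, and then use $B$ itself as the silting object. Two issues: first, your appeal to Proposition~\ref{Prop:Koszuldual} is misplaced---that result is specific to graded quadratic monomial algebras with zero differential, whereas here $A$ is an arbitrary homologically smooth proper dg algebra; the fact that $C$ is non-negative with semisimple $\h^{0}$ follows directly from the SMC axioms, not from that proposition. Second, the construction of the non-positive $B$ and the verification that $\D(B)\simeq\D(A)$ with the prescribed correspondence of simples is left vague (``guarantee that the natural functor\dots is a triangle equivalence''); making this precise essentially amounts to reproving the Koenig--Yang bijection, which comes close to assuming what you want to show.

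The paper's argument is more direct and bypasses the Koszul-dual step entirely: it stays with the \emph{non-negative} algebra $B:=\shEnd_{A}(P_{X})$ (your $C$), establishes $\per(B)=\D^{\bb}(B)$ via the equivalence with $\D^{\bb}(A)$, and then invokes \cite[Corollary~4.1]{KN}, which says that for a dg algebra with $\h^{<0}=0$ and $\h^{0}$ semisimple, the object $\h^{0}(B)$ is silting in $\D^{\bb}(B)$. This single citation replaces your entire Koszul-duality manoeuvre.
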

\begin{proof}
If $\D^{\rm b}(A)$ has a silting object $P$, then we have a triangle equivalence $\RshHom_{A}(P,?): \D^{\rm b}(A) \xra{\simeq}  \per (\shEnd(P))$ (see for example, \cite[Theorem 3.8]{Keller06}), where $\shEnd(P)$ is the endomorphism dg $k$-algebra which is non-positive because $P$ is silting.
Since $A$ is homologically smooth and proper, then $\per (A)=\D^{\rm b}(A)$ by Lemma \ref{Lem:Db=per},  so $P$ is perfect and $\RshHom_{A}(P,?)$ commutes with infinite direct sum.
Moreover, $\RshHom_{A}(P,?)$ gives an equivalence $\D(A) \simeq \D(\shEnd(P))$ which restricts to  an equivalence $\D^{\rm b}(A) \simeq \D^{\rm b}(\shEnd(P))$ by Lemma \ref{Lem:D|per}.
 Thus we have $\per (\shEnd(P))=\D^{\rm b}(\shEnd(P))$. Since $\D^{\rm b}(\shEnd(P))$ has a SMC given by  simple dg modules,  $\D^{\rm b}(A)$ also has a SMC.

On the other hand, assume $\D^{\rm b}(A)$ has a SMC $X$. Let $P_{X} \ra X$ be a $\cal H$-projective resolution of $X$ and let $B:=\shEnd(P_{X})$ be the Koszul dual dg algebra (\cite[Section 10]{Keller94}). Then we have a triangle equivalence $\RshHom_{A}(P_{X}, ?): \D^{\rm b}(A) \simeq \per(B)$, which sends $X$ to the free dg $B$-module $B$. Since $\D^{\rm b}(A)=\per(A)$, we have that $P_{X}\in \per (A)$ and $\RshHom_{A}(R_{X}, ?)$ can be extended to  an equivalence $\D(A)\simeq \D(B)$,  which restricts to $\D^{\rm b}(A) \simeq \D^{\rm b}(B)$ by Lemma \ref{Lem:D|per}. So we have $\per (B)=\D^{\rm b}(B)$. Note that  $\h^{<0}(B)=0$ and $\h^{0}(B)$ is semi-simple,  so $\h^{0}(B)$ is a silting object in $\D^{\rm b}(B)$ by \cite[Corollary 4.1]{KN} and it gives a silting object in $\D^{\rm b}(A)$.
Thus the assertion holds.
\end{proof}

 Before proving Theorem \ref{Thm:exist1}, we need the following results.
 \begin{Lem}\label{Lem:ab}
 Let $A$ and $A'$ be two algebras of the form $A^{(1)}$ with different gradings. If $|\za|_{A}+|\zb|_{A}=|\za|_{A'}+|\zb|_{A'}$ and $|\zb|_{A}+|\zg|_{A}|=|\zb|_{A'}+|\zg|_{A'}$, then $\per(A)$ and $\per(A')$ are triangle equivalent.
 \end{Lem}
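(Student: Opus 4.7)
The plan is to reduce to showing that $\per(A)$ is invariant under a single elementary move on the grading that preserves both $a_1$ and $b_1$, and then observe that all such gradings are connected by iterating this move. Concretely, writing a grading of $A^{(1)}$ as a triple $(x,y,z) = (|\za|, |\zb|, |\zg|)$, the constraint $x+y = a_1, y+z = b_1$ cuts out the one-parameter family $\{(x,\, a_1-x,\, b_1 - a_1 + x) : x \in \Z\}$. So it suffices to show that for each integer $x$, the algebra $A^{(1)}$ with grading $(x,y,z)$ is derived equivalent to $A^{(1)}$ with grading $(x-1, y+1, z-1)$.

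To produce this equivalence, I would consider the dg $A$-module $T := P_1 \oplus P_2[1]$, where $P_i := e_i A$ are the indecomposable projective dg $A$-modules. Since $P_1 \oplus P_2 = A$ is a compact generator of $\D(A)$, so is $T$. By Keller's Morita theorem, $\RshHom_A(T, -)$ induces a triangle equivalence $\per(A) \simeq \per(\shEnd_A(T))$, so the task reduces to identifying the derived endomorphism dg algebra $\shEnd_A(T)$ explicitly. Using $\shHom_A(P_i, P_j)^d = (e_j A e_i)^d$ together with $\shHom(M, N[k])^d = \shHom(M, N)^{d+k}$, one computes that $\shEnd_A(T)$ has the same quiver and relations as $A^{(1)}$, with the degrees of the two arrows $\za, \zg$ from $1$ to $2$ each decreased by $1$ (because of the shift $P_2 \leadsto P_2[1]$), while the degree of the arrow $\zb$ from $2$ to $1$ is increased by $1$. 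The relations $\za\zb$ and $\zb\zg$ still vanish because they correspond to compositions in $A$ that are zero. Thus $\shEnd_A(T) \cong A^{(1)}$ with grading $(x-1, y+1, z-1)$, as desired.

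Finally, by symmetry, taking $T' := P_1 \oplus P_2[-1]$ gives the inverse move $(x,y,z) \mapsto (x+1, y-1, z+1)$, so iterating these two moves connects any two gradings in the one-parameter family, proving the lemma. The main technical point is the degree bookkeeping in the computation of $\shEnd_A(T)$; since the dg algebra $A$ has zero differential and $\per(A)$ is generated by projectives, this computation reduces to a degree-shift calculation on graded Hom spaces and a verification that the multiplication corresponds to the expected relations, both of which are routine once set up carefully.
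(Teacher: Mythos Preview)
Your proposal is correct and follows essentially the same approach as the paper: both arguments replace the generator $A=P_1\oplus P_2$ by $T=P_1\oplus P_2[i]$ and identify $\shEnd_A(T)$ with $A^{(1)}$ carrying the grading $(|\za|\mp i,\,|\zb|\pm i,\,|\zg|\mp i)$. The only difference is cosmetic: the paper chooses $i=|\za|_A-|\za|_{A'}$ and lands on $A'$ in a single step, whereas you iterate the case $i=\pm 1$; since the family of admissible gradings is a $\Z$-torsor this is equivalent, but the direct choice of $i$ makes the iteration and the separate inverse move via $T'=P_1\oplus P_2[-1]$ unnecessary.
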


 \begin{proof}
 Let $P_{1}, P_{2}$ be the indecomposable  projective dg $A$-modules corresponding to the vertices $1$ and $2$, respectively. For any $i\in\Z$, the dg algebra $\shEnd_{A}(P_{1}\op P_{2}[i])$ is quasi-isomorphic to $\w{A}$ which as ungraded algebra is of the form $A^{(1)}$. Moreover, the grading  of $\w{A}$ is given by $|\za|_{\w{A}}=|\za|-i$, $|\zb|_{\w{A}}=|\zb|+i$ and $|\zg|_{\w{A}}=|\zg|-i$. Now let $i=|\za|_{A}-|\za|_{A'}$, then we have $\w{A}=A'$.
 So we have triangle equivalences $\per (A) \simeq\per (\shEnd_{A}(P_{1}\op P_{2}[i]))\simeq \per (A')$.
 \end{proof}

 \begin{Lem}\label{Lem:not=1}
 Let $A=A^{(1)}$.
 If $a_{1}\not=1$ or $b_{1}\not =1$, then $\D^{\rm b}(A)$ has a silting object.
 \end{Lem}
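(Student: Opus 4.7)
The plan is to exhibit silting objects explicitly by combining Theorem~\ref{Thm:siltreduc}(1) with Koszul duality (Proposition~\ref{Prop:Extdual}) via a case split on the parameters. By integrality and the hypothesis $a_1\neq 1$ or $b_1\neq 1$, at least one of $a_1\le 0$, $a_1\ge 2$, $b_1\le 0$, $b_1\ge 2$ holds; since the two vertices play symmetric roles in the argument, I will treat the cases $a_1\le 0$ and $a_1\ge 2$, the cases $b_1\le 0$ and $b_1\ge 2$ being verbatim the same with vertex $2$ (and $P_2$, $A_{e_2}$) replaced by vertex $1$ (and $P_1$, $A_{e_1}$).

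For the subcase $a_1\le 0$, I first compute $\End_{\D^{\bb}(A)}(P_2)=e_2Ae_2$: its basis is $\{e_2,\beta_1\alpha_1\}$ with degrees $\{0,a_1\}$, so $\Hom(P_2,P_2[n])=0$ for every $n>0$ and $P_2$ is pre-silting. Next, applying Definition~\ref{definition:left algebra} with $e=e_2$, I enumerate the paths from vertex $1$ to vertex $1$ passing through vertex $2$ whose consecutive subpaths all lie in $I=\langle\alpha_1\beta_1,\beta_1\gamma_1\rangle$: the only such path is $\alpha_1\beta_1$, and $I_{e_2}$ is empty since $\beta_1\alpha_1\notin I$. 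Hence $A_{e_2}$ is isomorphic, as a dg algebra with zero differential, to the graded polynomial algebra $k[x]$ with $|x|=a_1-1\le -1$. Because $|x|<0$, $k[x]$ itself is silting in $\per(k[x])$, and Theorem~\ref{Thm:siltreduc}(1) (whose hypothesis that $A$ be proper is built into the setup) then produces a silting object in $\per(A)=\D^{\bb}(A)$ containing $P_2$.

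For the subcase $a_1\ge 2$, I pass to the Koszul dual $A^{!}$. A direct inspection of the opposite quiver $Q^{\rm op}$ with the complementary relations $I^{\perp}$ shows that, after relabelling the two vertices, $A^{!}$ is again of the form $A^{(1)}$ with parameters $(2-a_1,\,2-b_1)$, and $A^{!}$ is smooth and proper by Lemma~\ref{Lem:smoothvsproper}. Since $2-a_1\le 0$, the previous subcase applied to $A^{!}$ yields a silting object in $\per(A^{!})=\D^{\bb}(A^{!})$, and Proposition~\ref{Prop:Extdual} transports this to a silting object in $\D^{\bb}(A)$.

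The main obstacle is the explicit identification $A_{e_2}\cong k[x]$: it requires a careful verification, from Definition~\ref{definition:left algebra}, that no longer concatenation of relations survives (because $\beta_1\alpha_1\notin I$ blocks iteration of the only usable relation $\alpha_1\beta_1$) and that no further compositions become relations in $I_{e_2}$. Once $A_{e_2}$ is identified with a graded polynomial algebra in a single variable of strictly negative degree, the remainder of the argument is formal.
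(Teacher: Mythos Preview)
Your proof is correct and follows essentially the same approach as the paper: both arguments compute $e_2Ae_2\cong k[X]/(X^2)$ and $A_{e_2}\cong k[Y]$ with $|Y|=a_1-1$, invoke silting reduction (Theorem~\ref{Thm:siltreduc}) when $a_1\le 0$, and pass to $A^{!}$ via Proposition~\ref{Prop:Extdual} to reduce $a_1\ge 2$ to the previous case. Your case split is organized slightly differently (the paper groups ``$a_1\le 0$ or $b_1\le 0$'' versus ``$a_1>0$ and $b_1>0$'' and appeals to a tacit symmetry, whereas you make the vertex-swap $e_2\leadsto e_1$ explicit), and you spell out the verification that no longer relation-chains survive in $A_{e_2}$, but the substance is identical.
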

 \begin{proof}
 If $a_{1}\le 0$ or $b_{1}\le 0$ holds, we may assume $a_{1}\le 0$. Then by Theorem \ref{Thm:recollement1}, we have a recollement
 \[
\xymatrixcolsep{4pc}\xymatrix{
\D(A_{e_{2}}) \ar[r]^{i_{*}}
&\D (A) \ar@/_1.5pc/[l]_{i^{*}} \ar@/^1.5pc/[l]^{i^{!}}
 \ar[r]^{?\ot_{A}^{\bf L}Ae_{2}}
&\D(e_{2}Ae_{2}) \ar@/^-1.5pc/[l]_{?\ot^{\bf L}_{k}e_{2}A} \ar@/^1.5pc/[l]^{\RshHom_{k}(Ae_{2}, ?)}
  }, \]
  where $e_{2}Ae_{2}=k[X]/(X^{2})$ with $\deg X=a_{1}\le 0$ and  $A_{e_{2}}=k[Y]$ with $\deg Y=[\za_{1}\zb_{1}]=a_{1}-1<0$.
Since $e_{2}Ae_{2}$ is a silting object in $\per(e_{2}Ae_{2})$, then $e_{2}A=(e_{2}Ae_{2})\ot_{k}^{\bf L}e_{2}A$ is pre-silting in $\per (A)=\D^{\rm b}(A)$.
Since $A_{e_{2}}$ is silting in $\per(A_{e_{2}})$, then by Theorem \ref{Thm:siltreduc}, we know $\D^{\rm b}(A)$ has a silting object.

 If $a_{1}>0$ and $b_{1}>0$, we consider $A^{!}$. Then at least one of $a'_{1}:=|\za_{1}^{\rm op}|+|\zb_{1}^{\rm op}|=2-a_{1}$ and $b_{1}'=|\zb_{1}^{\rm op}|+|\zg_{1}^{\rm op}|=2-b_{1}$ is non-positive since $a_{1}\ge 2$ or $b_{1}\ge 2$ holds. So $\D^{\rm b}(A^{!})$ admits a silting object by above argument. Since $\D^{\rm b}(A) \simeq \D^{\rm b}(A^{!})$ by Proposition \ref{Prop:Extdual}, we have that $\D^{\rm  b}(A)$ has a silting object.
 \end{proof}

 \begin{proof}[Proof of Theorem \ref{Thm:exist1}]
 Thanks to Proposition \ref{Prop:silting-SMC}, we only need to show the existence of silting objects.
  Assume $A$ is derived equivalent to $A^{(n)}$ and $a_{i}\not=1$ or $b_{i}\not=1$ holds for any $1\le i\le n$.
 We apply induction on $n$.
 The assertion is true for $n=1$ by Lemma \ref{Lem:not=1}. Let $e=e_{1}+e_{2}$.  Assume  $n>1$ and the assertion is true for $1, 2, \dots, n-1$. Note that we have a recollement by Proposition \ref{Thm:Fukaya}
  \[\xymatrixcolsep{4pc}\xymatrix{
\D^{\rm b}(A^{(n-1)}) \ar[r]
&\D^{\rm b}(A^{(n)}) \ar@/_1.5pc/[l] \ar@/^1.5pc/[l]^{}
 \ar[r]^{?\ot_{A}^{\bf L}Ae}
&\D^{\rm b}(A^{(1)}) \ar@/^-1.5pc/[l]_{?\ot^{\bf L}_{eAe}eA} \ar@/^1.5pc/[l]
  }, \]
  where $A^{(1)}=eA^{(n)}e$ and $A^{(n-1)}=(A^{(n)})_{e}$ are given by the sub graded quivers of $Q^{(n)}$.
  Thus $\D^{\rm b}(A^{(n)})$ has silting objects by our assumption and Theorem~\ref{Thm:siltreduc}.

If $A$ is not derived equivalent to $A^{(n)}$, then $\D^{\rm b}(A)$ has a full exceptional sequence by Proposition \ref{Prop:excep}. This exceptional sequence give rises to a silting object in $\D^{\rm b}(A)$ by \cite[Proposition 3.5]{AI}. So the assertion is true.
 \end{proof}

For $A=A^{(1)}$ with $a_{1}=1=b_{1}$, we have  checked, in a case by case analysis, that for every indecomposable object $X$ in $\D^{\rm b}(A)$, we have that  $\Hom(X, X[n])$ is non-zero, for some positive $n$. This shows that the converse of Theorem \ref{Thm:exist1} is also true in this case and that $\per(A^{(1))}$ does not have a silting object.

\

\subsection*{Funding}
The first author acknowledges  support by Shaanxi Normal University and the NSF of China (Grant No. 12271321).
The third author acknowledges  support by the EPSRC Early Career Fellowship EP/P016294/1. The second and third author acknowledges  partial support by  the DFG through the project SFB/TRR 191 Symplectic Structures
in Geometry, Algebra and Dynamics (Projektnummer 281071066--TRR 191).


\end{document}